\newcolumntype{L}[1]{>{\raggedright\let\newline\\\arraybackslash\hspace{0pt}}m{#1}}
\newcolumntype{C}[1]{>{\centering\let\newline\\\arraybackslash\hspace{0pt}}m{#1}}
\newcolumntype{R}[1]{>{\raggedleft\let\newline\\\arraybackslash\hspace{0pt}}m{#1}}
\definecolor{blue}{rgb}{0.0, 0.0,1.0}
\definecolor{gray}{rgb}{0.7, 0.7, 0.7}
\newcommand{\mtx}[1]{\boldsymbol{#1}}
\newcommand{\mvec}[1]{\boldsymbol{#1}}
\newcommand{\wt}[1]{\widetilde{#1}}
\theoremstyle{plain}
\newtheorem{theorem}{Theorem}[section]
\newtheorem{lemma}[theorem]{Lemma}
\newtheorem{corollary}[theorem]{Corollary}
\newtheorem{assump}[theorem]{Assumption}
\theoremstyle{definition}
\newtheorem{definition}[theorem]{Definition}
\newtheorem*{rem*}{Remark}
\newtheorem*{warning*}{Warning}
\newtheorem{rem}[theorem]{Remark}
\DeclareMathOperator{\range}{range}
\DeclareMathOperator{\prob}{\mathbb{P}}
\DeclareMathOperator{\vol}{Vol}
\DeclareMathOperator*{\argmin}{\arg\!\min}
\DeclareMathOperator{\vc}{vec}
\DeclareMathOperator{\erf}{\rm{erf}}
\DeclarePairedDelimiter{\ceil}{\lceil}{\rceil}
\newsavebox\CBox
\newcommand{\subalign}[2][c]{%
	\if#1c\vcenter\else\vtop\fi{%
		\Let@ \restore@math@cr \default@tag
		\baselineskip\fontdimen10 \scriptfont\tw@
		\advance\baselineskip\fontdimen12 \scriptfont\tw@
		\lineskip\thr@@\fontdimen8 \scriptfont\thr@@
		\lineskiplimit\lineskip
		\ialign{\hfil$\m@th\scriptstyle##$&$\m@th\scriptstyle{}##$\hfil\crcr
			#2\crcr
		}%
	}%
}
\renewcommand*{\@fnsymbol}[1]{\ensuremath{\ifcase#1\or *\or \ddagger\or \mathsection\or \vee\or \wedge\or \dagger\or
		\mathsection\or \mathparagraph\or \|\or **\or \dagger\dagger
		\or \ddagger\ddagger \else\@ctrerr\fi}}
\numberwithin{equation}{section}
\begin{document}
\title{Constrained global optimization of functions with low effective dimensionality using multiple random embeddings}

\author{
	Coralia Cartis \thanks{The order of the authors is alphabetical; the third author is the primary contributor. Mathematical Institute, University of Oxford, Radcliffe Observatory Quarter, Woodstock Road,
		Oxford, OX2 6GG, UK; \texttt{cartis,massart,otemissov@maths.ox.ac.uk}} \textsuperscript{\normalfont,} \thanks{The Alan Turing Institute, The British Library, London, NW1 2DB, UK. This work was supported by The Alan Turing Institute under The Engineering and Physical Sciences Research Council (EPSRC) grant EP/N510129/1 and under the Turing project scheme.} 
	\and Estelle Massart \footnotemark[1] \textsuperscript{\normalfont,} \thanks{National Physical Laboratory, Hampton Road, Teddington, Middlesex, TW11 0LW, UK. This author's work was supported by the National Physical Laboratory.} 
	\and
	Adilet Otemissov \footnotemark[1] \textsuperscript{\normalfont,}  \footnotemark[2] 
}

\date{\today}
\maketitle
\footnotesep=0.4cm

{\small
	\begin{abstract}
	We consider the bound-constrained global optimization of functions with low effective dimensionality, that are constant along an (unknown) linear subspace and only vary over the effective (complement) subspace. We aim to implicitly explore the intrinsic low dimensionality of the constrained landscape using feasible random embeddings, in order to understand and improve the scalability of algorithms for the global optimization of these special-structure problems. A reduced subproblem formulation is investigated that solves the original problem over a random low-dimensional subspace subject to affine constraints, so as to preserve feasibility with respect to the given domain. Under reasonable assumptions, we show that the probability that the reduced problem is  successful in solving the original, full-dimensional problem is positive. Furthermore, in the case when the objective's
	effective subspace is aligned with the coordinate axes, we provide an asymptotic  bound on this success probability that captures its algebraic dependence on the effective and, surprisingly,  ambient dimensions. 
	 We then  propose  X-REGO,  a generic algorithmic framework that uses multiple random embeddings,  solving the above reduced
	 problem repeatedly, approximately and possibly, adaptively.  Using the success probability of the reduced subproblems,
	   we prove that X-REGO converges globally, with probability one, and linearly in the number of embeddings, to an $\epsilon$-neighbourhood of a constrained global minimizer. Our numerical experiments on special structure functions illustrate our theoretical findings and the improved scalability of X-REGO variants when coupled with state-of-the-art global --- and even local --- optimization solvers for the subproblems.

	\end{abstract}
	
	\bigskip
	
	\begin{center}
		\textbf{Keywords:}
		global optimization, constrained optimization, random embeddings, dimensionality reduction techniques, functions with low effective dimensionality.
	\end{center}
}

\maketitle

\section{Introduction} \label{sec:intro}


In this paper, we address the bound-constrained global optimization problem 
\begin{equation}
\tag{P}
\begin{aligned} \label{eq: GO}
f^* := \min_{\mvec{x}\in \mathcal{X}} & \;\; f(\mvec{x}), \\
\end{aligned}
\end{equation}
where $f:  \mathbb{R}^D \rightarrow \mathbb{R}$ is continuous, possibly non-convex and deterministic\footnote{Our analysis would be significantly more involved, but still possible, if $f$ is only well defined on $\mathcal{X}$. Note that in our X-REGO algorithm, we only query $f(\mvec{x})$ at feasible points $\mvec{x}\in\mathcal{X}$.}, and where, without loss of generality, $\mathcal{X} := [-1,1]^D\subseteq  \mathbb{R}^D$.

In an attempt to alleviate the curse of dimensionality of generic global optimization, we focus on objective functions with `\textit{low effective dimensionality}' \cite{Wang2016}, namely, those that only vary over a low-dimensional subspace (which may not necessarily be aligned with standard axes), and remain constant along its orthogonal complement. These functions are also known as objectives with `\textit{active subspaces}' \cite{Constantine2015} or `\textit{multi-ridge}'  \cite{Fornasier2012, Tyagi2014}. They are frequently encountered in  applications, typically when tuning (over)parametrized models and processes, such as in hyper-parameter optimization for neural networks \cite{Bergstra2012}, heuristic algorithms for combinatorial optimization problems \cite{Hutter2014},  complex engineering and physical simulation problems \cite{Constantine2015} as in climate modelling \cite{Knight2007}, and policy search and  dynamical system control \cite{Zhang19, Frohlich2020}. 

When the objective has low effective dimensionality and the effective subspace of variation is known, it is straightforward to cast \eqref{eq: GO} into a lower-dimensional problem which has the same global minimum $f^*$ by restricting it to and solving \eqref{eq: GO} only within  this important subspace.
Typically, however, the effective subspace is  unknown, and
random embeddings have been proposed to reduce 
the size of \eqref{eq: GO} and hence the cost of its solution, while attempting to preserve the problem's (original) global minimum values. 
In this paper, we investigate the following 
feasible formulation of the reduced randomised problem,
\begin{equation} \label{eq: AREGO}
\tag{RP$\mathcal{X}$}
\begin{aligned}
\min_{\mvec{y}}  &\;\; f(\mtx{A}\mvec{y}+\mvec{p})  \\
\text{subject to} &\;\; \mtx{A}\mvec{y} + \mvec{p} \in \mathcal{X},
\end{aligned}
\end{equation}
where $\mtx{A}$ is a $D\times d$ Gaussian random matrix (see \Cref{def: Gaussian_matrix}) with $d \ll D$,
 and where $\mvec{p} \in \mathcal{X}$ is user-defined and provides additional flexibility that we exploit algorithmically. Our approach needs the following clarification.

\begin{definition} \label{def: successful_AREGO}
	We say that \eqref{eq: AREGO} is \textit{successful} if there exists $\mvec{y}^* \in \mathbb{R}^d$ such that $f(\mtx{A}\mvec{y}^*+\mvec{p}) = f^*$ and $\mtx{A}\mvec{y}^* + \mvec{p} \in \mathcal{X}$.
\end{definition}
We derive a lower bound on the probability that  \eqref{eq: AREGO} is successful in the case when $d$ is equal to or larger than the effective dimension. We show that this success probability is positive and that it depends  on both the effective subspace and the ambient dimensions\footnote{A brief description, without proofs, of the main results of this paper has appeared as a four-page conference proceedings paper in the ICML Workshop ``Beyond first order methods in ML systems'' (2020), see \url{https://drive.google.com/file/d/1JxQc9rSK8GYchKnDp0dhwEa4f3AeyNeb/view}.}. However, in the case when the effective subspace is aligned with the coordinate axes, we show that the dependence on $D$ in this lower bound is at worst algebraic.
 We then propose X-REGO ($\mathcal{X}$ - Random Embeddings for Global Optimization), a generic algorithmic framework for solving \eqref{eq: GO} using multiple random embeddings. Namely, X-REGO solves \eqref{eq: AREGO} repeatedly with different $\mtx{A}$ and possibly different $\mvec{p}$, and can use  any global optimization algorithm for solving the reduced problem \eqref{eq: AREGO}.
Using the computed  lower bound on the probability of success of \eqref{eq: AREGO}, we derive a global convergence result for X-REGO, showing that as the number of random embeddings increases, X-REGO converges linearly, with probability one, to  an
$\epsilon$-neighbourhood of a global minimizer of (P).

\paragraph{Existing relevant literature.} 
Optimization of functions with low effective dimensionality has been recently studied primarily as an attempt to remedy the scalability challenges of Bayesian Optimization (BO),
such as in \cite{Djolonga2013, Wang2016, Garnett2014, Li2016, Eriksson2018}.
Investigations of these special-structure problems have been extended beyond BO,  to derivative-free optimization \cite{QianHuYu2016}, multi-objective optimization \cite{Qian2017} and evolutionary methods \cite{Sanyang2016, Demo2020}. 
 As the effective subspace is generally unknown, some existing approaches learn the effective subspace beforehand \cite{Fornasier2012,Tyagi2014, Djolonga2013, Eriksson2018}, while others estimate it during the optimization, updating the estimate as new information becomes available on the objective function \cite{Garnett2014, Zhang19, Chen2020, Demo2020}. We focus here on an alternative approach, bypassing the subspace learning phase, and optimizing directly over random low-dimensional subspaces, as proposed in \cite{Wang2016, Binois2014, Binois2017, Kirschner19}.

 \citeauthor{Wang2016} \cite{Wang2016} propose the REMBO algorithm, that solves,
 using Bayesian methods, a single reduced subproblem,
 \begin{equation} \label{eq: REGO}
\tag{RP}
\begin{aligned}
\min_{\mvec{y}}  &\;\; f(\mtx{A}\mvec{y})  \\
\text{subject to} &\;\; \mvec{y} \in \mathcal{Y} = [-\delta, \delta]^d,
\end{aligned}
\end{equation}
where $\mtx{A}$ is as above, and  $\delta > 0$. They evaluate the probability that the solution of \eqref{eq: REGO} corresponds to a solution of the original problem (P)  in the case when the effective subspace is aligned with coordinate axes and when $d=d_e$, where $d_e$ denotes the dimension of the effective subspace; they show that this  probability of success of \eqref{eq: REGO} depends on the parameter $\delta$ (the size of the $\mathcal{Y}$ box), and it decreases as $\delta$ shrinks. Conversely, setting $\delta$ large may result in large computational costs to solve \eqref{eq: REGO}. Thus, a careful calibration of $\delta$ is needed for good algorithmic performance.
The theoretical analysis in \cite{Wang2016}  has been extended by \citeauthor{Sanyang2016} \cite{Sanyang2016}, where the probability of success of \eqref{eq: REGO} is quantified in the case $d \geq d_e$;  an algorithm, called REMEDA, is also proposed in \cite{Sanyang2016} that uses Gaussian random embeddings in the framework of evolutionary methods for high-dimensional unconstrained global optimization. 
 
 In the recent preprint \cite{Cartis2020}, we further extend these analyses to arbitrary effective subspaces (i.e., not necessarily aligned with the coordinate axes) and random embeddings of dimension $d\geq d_e$, and consider the wider framework of generic unconstrained high-dimensional global optimization. We propose the REGO algorithm, that replaces the high-dimensional problem \eqref{eq: GO} (with $\mathcal{X} = \mathbb{R}^{D})$, by a {\it single} reduced problem \eqref{eq: REGO}, and solves \eqref{eq: REGO} using any global optimization algorithm. Instead of  estimating solely the norm of an optimal solution of \eqref{eq: REGO}, as in \cite{Wang2016, Sanyang2016}, we derive its exact probability distribution. Furthermore, we show that its squared Euclidean norm (when appropriately scaled) follows an inverse chi-squared distribution with $d - d_e +1$ degrees of freedom, and use a tail bound on the chi-squared distribution to get a lower bound on the probability of success of \eqref{eq: REGO}. Our theory and numerical experiments indicate that, under suitable assumptions, the success of \eqref{eq: REGO} is essentially independent on $D$, but depends mainly on two factors: the gap between the subspace dimension $d$ and the effective dimension $d_e$, and the ratio between $\delta$ (the size of the low-dimensional domain), and the distance from the origin (the centre of the original domain $\mathcal{X}$) to the closest affine subspace of global minimizers.
 
 In contrast to \cite{Sanyang2016} and \cite{Cartis2020}, the present case of the {\it constrained} problem (P) poses a new challenge: a solution $\mvec{y^*}$ of \eqref{eq: REGO} is not necessarily feasible for the full-dimensional problem \eqref{eq: GO} (i.e., $\mtx{A} \mvec{y}^* \notin \mathcal{X}$). To remedy this, \citeauthor{Wang2016} \cite{Wang2016} endow REMBO with an additional step that projects $\mtx{A} \mvec{y}^*$ onto $\mathcal{X}$. However, they observe that using a classical kernel (such as the squared exponential kernel) directly on the low-dimensional domain $\mathcal{Y}$ may lead to an over-exploration of the regions on which the projection map onto $\mathcal{X}$ is not injective. The design of kernels avoiding this over-exploration has been tackled in \cite{Binois2014, Binois2017}.
 \citeauthor{Binois2017} \cite{Binois2017} further advances the discussion regarding the choice of the low-dimensional domain $\mathcal{Y}$ in \eqref{eq: REGO} and computes an `optimal' set $\mathcal{Y}^* \subset \mathbb{R}^d$, i.e., a set that has minimum (here, infimum) volume among all the sets $\mathcal{Y} \subset \mathbb{R}^d$ for which the image of the mapping $\mathcal{Y} \to \mathcal{X}: \mvec{y} \mapsto p_\mathcal{X}(\mtx{A} \mvec{y})$ contains the `maximal embedded set' $\{ p_\mathcal{X}(\mtx{A} \mvec{y}) : \mvec{y} \in \mathbb{R}^d \}$, where $p_{\mathcal{X}}(\mvec{x})$ is the classical Euclidean projection of $\mvec{x}$ on $\mathcal{X}$. They show that $\mathcal{Y^*}$ has an intricate representation when the dimension of the full-dimensional problem is large, and propose to replace the Euclidean projection map $p_{\mathcal{X}}$ suggested by \citeauthor{Wang2016} \cite{Wang2016} by an alternative mapping for which an `optimal' low-dimensional domain has nicer properties.
 \citeauthor{Nayebi2019} \cite{Nayebi2019} circumvent the projection step by replacing the Gaussian random embeddings of \eqref{eq: REGO} by random embeddings defined using hashing matrices, and choose $\mathcal{Y} = [-1,1]^d$. This choice guarantees that any solution of the low-dimensional problem provides an admissible solution for the full-dimensional problem in the case $\mathcal{X} = [-1,1]^D$.
 
 The need to combine optimization algorithms that rely on random Gaussian embeddings with a projection step has also been recently discussed in \cite{Letham2020}, where it is suggested to replace the formulation \eqref{eq: REGO} by  \eqref{eq: AREGO}, that we also consider in this paper. However, \citeauthor{Letham2020} \cite{Letham2020} do not provide analytical estimates of the probability of success of this new formulation, solely evaluating it numerically using Monte-Carlo simulations; they also do not use multiple random embeddings.  Our proposed X-REGO algorithmic framework (and more precisely, the adaptive variant A-REGO described in Section \ref{sec: Numerics}) is closely related to the sequential algorithm proposed by~\citeauthor{QianHuYu2016} \cite{QianHuYu2016}, in the framework of unconstrained derivative-free optimization of functions with approximate low-effective dimensionality, and to the algorithm proposed in \cite{Kirschner19} for constrained Bayesian optimization of functions with low-effective dimension, using one-dimensional random embeddings. However,  our results rely on the assumption that the subspace dimension $d$ is larger than the effective dimension $d_e$, and so our approach significantly differs from \cite{Kirschner19}. Very recently, \citeauthor{TranThe2020} \cite{TranThe2020} have proposed an algorithm that uses several low-dimensional (deterministic) embeddings in parallel for Bayesian optimization of high-dimensional functions. 

Randomized subspace methods have recently attracted much interest for local or convex optimization problems; see for example, \cite{Nesterov2017, kozak2019, gower2019, hanzely2020}; no low effective dimensionality assumption
is made in these works. Finally, we note that the main step in our convergence analysis consists in deriving a lower bound on the probability that a random subspace of given dimension intersects a given set (the set of approximate global minimizers), which is an important problem in stochastic geometry, see, e.g., the extensive discussion by \citeauthor{Oymak2017} \cite{Oymak2017}. Unlike the results presented in \cite{Oymak2017}, our results do not involve statistical dimensions of sets, which are unknown and, in our case, problem dependent.


\paragraph{Our contributions.}
 Here we investigate a general random embedding framework for the \emph{bound-constrained} global optimization of functions with low effective dimensionality. This framework replaces the original, potentially high-dimensional problem \eqref{eq: GO} with several reduced and randomized subproblems of the form  \eqref{eq: AREGO}, which directly ensures feasibility of the iterates with respect to the constraints.
 
Using various properties of Gaussian matrices and a useful result from \cite{Cartis2020},
we derive a lower bound on the probability of success of \eqref{eq: AREGO} when $d\geq d_e$. 
To achieve this, we provide a sufficient condition for the  success of \eqref{eq: AREGO} that depends on a random vector $\mvec{w}$, which in turn, is a function of the embedding matrix $\mtx{A}$, the parameter $\mvec{p}$ of \eqref{eq: AREGO} and an arbitrary global minimizer $\mvec{x}^*$ of \eqref{eq: GO}. We show that $\mvec{w}$ follows a $(D-d_e)-$dimensional $t$-distribution with $d - d_e +1$ degrees of freedom, and provide a lower bound on the probability of success of \eqref{eq: AREGO} in terms of the integral of the probability density function of $\mvec{w}$ over a given closed domain. In the case when the effective subspace is aligned with the coordinate axes, the closed domain simplifies to a $(D-d_e)-$dimensional box, and we provide an asymptotic expansion of the integral of the probability density function over the box, when $D\rightarrow \infty$ (and $d$ and $d_e$ are fixed).
Our theoretical analysis, backed by numerical testing, indicates that the probability of success of \eqref{eq: AREGO} decreases with the dimension $D$ of the original problem \eqref{eq: GO}. However, in the case when the effective subspace is aligned with the coordinate axes, we show that it decreases at most algebraically with the ambient dimension $D$ for some useful choices of $\mvec{p}$.

 We also propose the X-REGO algorithm, a generic framework for the constrained global optimization problem (P) that sequentially or in parallel solves multiple subproblems \eqref{eq: AREGO}, varying $\mtx{A}$ and also possibly $\mvec{p}$.
  We prove global convergence of X-REGO to a set of approximate global minimizers of (P) with probability one, with linear rate in terms of the number of subproblems solved.   This result requires mild assumptions   on  problem (P) ($f$ is Lipschitz continuous and (P) admits a strictly feasible solution) and on the algorithm used to solve the reduced problem (namely, it must solve \eqref{eq: AREGO} globally and approximately, to required accuracy), and allows a diverse set of possible choices of $\mvec{p}$ (random, fixed, adaptive, deterministic). Our convergence proof crucially uses our result that the probability of success of \eqref{eq: AREGO} is positive and uniformly bounded away from zero with respect to the choice of $\mvec{p}$, and hence, assumes that $d\geq d_e$.
  
 
 We provide an extensive numerical comparison of several variants of X-REGO on a set of test problems with low effective dimensionality, using three different solvers for \eqref{eq: AREGO}, namely, BARON \cite{Sahinidis2014}, DIRECT \cite{Finkel2003} and (global and local) KNITRO \cite{Byrd2006}. We find that X-REGO variants show significantly improved scalability with most solvers, as the ambient problem dimension grows, compared to directly using the respective solvers on the test set. Notable efficiency was obtained in particular when local KNITRO was used to solve the subproblems and the points $\mvec{p}$ were updated to the `best' point (with the smallest value of $f$) found so far.

\paragraph{Paper outline.}
In \Cref{sec: prelim}, we recall the definition of functions with low effective dimensionality and some existing results that we will use in our analysis. \Cref{sec: estim_success} derives lower bounds for the probability of success of \eqref{eq: AREGO}. The X-REGO algorithm and its global convergence are then presented in \Cref{sec: Convergence}, while  in \Cref{sec: Numerics}, different X-REGO variants are compared numerically on benchmark problems using three optimization solvers (DIRECT, BARON and KNITRO) for the subproblems. Our conclusions are drawn in \Cref{sec:conclusions}.

\paragraph{Notation.}
We use bold capital letters for matrices ($\mtx{A}$) and bold lowercase letters ($\mvec{a}$) for vectors. In particular, $\mtx{I}_D$ is the $D \times D$ identity matrix and $\mvec{0}_D$, $\mvec{1}_D$ (or simply $\mvec{0}$, $\mvec{1}$) are the $D$-dimensional vectors of zeros and ones, respectively. We write $a_i$ to denote the $i$th entry of $\mvec{a}$ and write $\mvec{a}_{i:j}$, $i<j$, for the vector $(a_i \; a_{i+1} \cdots a_{j})^T$. We let  $\range(\mtx{A})$ denote the linear subspace spanned in $\mathbb{R}^D$ by the columns of $\mtx{A} \in \mathbb{R}^{D \times d}$. We write $\langle \cdot , \cdot \rangle$, $\| \cdot \|$ and $\| \cdot \|_{\infty}$ for the usual Euclidean inner product, the Euclidean norm and the infinity norm, respectively. Where emphasis is needed, for the Euclidean norm we also use $\| \cdot \|_2$. 

Given two random variables (vectors) $x$ and $y$ ($\mvec{x}$ and $\mvec{y}$), the expression $x \stackrel{law}{=} y$ ($\mvec{x} \stackrel{law}{=} \mvec{y}$) means that $x$ and $y$ ($\mvec{x}$ and $\mvec{y}$) have the same distribution. We reserve the letter $\mtx{A}$ for a $D\times d$ Gaussian random matrix (see \Cref{def: Gaussian_matrix}) and write $\chi^2_n$ to denote a chi-squared random variable with $n$ degrees of freedom (see \Cref{def:chi-squared_rv}). 

Given a point $\mvec{a} \in \mathbb{R}^D$ and a set $S$ of points in $\mathbb{R}^D$, we write $\mvec{a}+S$ to denote the set $\{ \mvec{a}+\mvec{s}: \mvec{s} \in S \}$. Given functions $f(x):\mathbb{R}\rightarrow \mathbb{R}$ and $g(x):\mathbb{R}\rightarrow \mathbb{R}^+$, we write $f(x) = \Theta(g(x))$ as $x \rightarrow \infty$ to denote the fact that there exist positive reals $M_1,M_2$ and a real number $x_0$ such that, for all $x \geq x_0$, $M_1g(x)\leq|f(x)| \leq M_2g(x)$.


\section{Preliminaries} \label{sec: prelim}

\subsection{Functions with low effective dimensionality}

\begin{definition}[Functions with low effective dimensionality
\cite{Wang2016}] \label{def: simple_function}
	A function $f : \mathbb{R}^D \rightarrow \mathbb{R}$ has effective dimension $d_e$ if there exists a linear subspace $\mathcal{T}$ of dimension $d_e$ such that for all vectors $\mvec{x}_{\top}$ in $\mathcal{T}$ and $\mvec{x}_{\perp}$ in $\mathcal{T}^{\perp}$ (the orthogonal complement of $\mathcal{T}$), we have
	\begin{equation}\label{eq: def_fun_eff_dim}
		f(\mvec{x}_{\top} + \mvec{x}_{\perp}) = f(\mvec{x}_{\top}),
	\end{equation}
	and $d_e$ is the smallest integer satisfying \eqref{eq: def_fun_eff_dim}.
\end{definition}

The linear subspaces $\mathcal{T}$ and $\mathcal{T}^{\perp}$ are called the \textit{effective} and \textit{constant} subspaces of $f$, respectively.  In this paper, we make the following assumption on the function $f$. \begin{assump}\label{ass:AREGO_fun_eff_dim}
	The function $f:\mathbb{R}^D \rightarrow \mathbb{R}$ is continuous and has effective dimensionality $d_e$ such that $d_e < D$ and $d_e \leq d$, with effective subspace\footnote{Note that $\mathcal{T}$ in \Cref{ass:AREGO_fun_eff_dim} may not be aligned with the standard axes. } $\mathcal{T}$ and constant subspace $\mathcal{T}^{\perp}$ spanned by the columns of the orthonormal matrices $\mtx{U} \in \mathbb{R}^{D \times d_e}$ and $\mtx{V} \in \mathbb{R}^{D\times(D-d_e)}$, respectively. We let $\mvec{x}_\top = \mtx{U} \mtx{U}^T \mvec{x}$ and $\mvec{x}_\perp = \mtx{V} \mtx{V}^T \mvec{x}$, the unique Euclidean projections of any vector $\mvec{x} \in \mathbb{R}^D$ onto $\mathcal{T}$ and $\mathcal{T}^\perp$, respectively.
\end{assump}

 We define the set of feasible global minimizers of problem \eqref{eq: GO},
\begin{equation}\label{def:G}
	G := \{\mvec{x} \in \mathcal{X}: f(\mvec{x}) = f^* \}.
\end{equation}
Note that, for any $\mvec{x}^* \in G$ with Euclidean projection $\mtx{x}_\top^*$ on the effective subspace $\mathcal{T}$, and for any $\tilde{\mvec{x}} \in \mathcal{T}^\perp$, we have
\begin{equation} \label{eq: f^*=f(x_top^*)}
f^* = f(\mvec{x}^*) = f(\mvec{x}^*_{\top} + \tilde{\mvec{x}}) = f(\mvec{x}_{\top}^*).
\end{equation}
The minimizer $\mvec{x}_{\top}^*$ may lie outside $\mathcal{X}$, and furthermore, there may be multiple points $\mvec{x}_{\top}^*$ in $\mathcal{T}$ satisfying $f^* = f(\mvec{x}^*_\top)$ as illustrated in \cite[Example 1.1]{Cartis2020}. 
Thus, the set $G$ is (generally)\footnote{Except in degenerate cases, see \Cref{def:G^*_degenerate}.} a union of (possibly infinitely many) ($D-d_e$)-dimensional simply-connected polyhedral sets, each corresponding to a particular $\mvec{x}_{\top}^*$. If $\mvec{x}_{\top}^*$ is unique, i.e., every global minimizer $\mvec{x}^* \in G$ has the same Euclidean projection $\mvec{x}_\top^*$ on the effective subspace, then $G$ is the $(D-d_e)$-dimensional set $\{ \mvec{x} \in \mathcal{X} : \mvec{x} \in \mvec{x}_{\top}^* + \mathcal{T}^{\perp} \}$.

\begin{definition} \label{def:AREGO_choice_of_x^*}
	Suppose \Cref{ass:AREGO_fun_eff_dim} holds. For any global minimizer $\mvec{x}^* \in G$, let $G^* := \{ \mvec{x} \in \mathcal{X} : \mvec{x} \in \mvec{x}_{\top}^* + \mathcal{T}^{\perp} \}$ be the simply connected subset of $G$ that contains $\mvec{x}_{\top}^* = \mtx{U} \mtx{U}^T \mvec{x}^*$, and $\mathcal{G}^* := \{ \mvec{x} \in \mathbb{R}^D: \mvec{x} \in \mvec{x}_{\top}^* + \mathcal{T}^{\perp} \}$, the ($D-d_e$)-dimensional affine subspace that contains $G^*$.
\end{definition}


We can express $G^* = \mathcal{G}^* \cap \mathcal{X} = \{\mvec{x}_{\top}^*+\mtx{V} \mvec{g}:  -\mvec{1} \leq \mvec{x}_{\top}^* + \mtx{V} \mvec{g} \leq \mvec{1}, \mvec{g} \in \mathbb{R}^{D-d_e} \}$, where $\mtx{V}$ is defined in \Cref{ass:AREGO_fun_eff_dim}. For each $G^*$, we define the corresponding set of ``admissible'' $(D-d_e)$-dimensional vectors as
\begin{equation} \label{eq: barGstar}
    \bar{G}^* := \{\mvec{g} \in \mathbb{R}^{D-d_e} :  \mvec{x}_{\top}^* + \mtx{V} \mvec{g} \in G^*\}.
\end{equation}
Note that the set $G^*$ is $(D-d_e)$-dimensional if and only if the volume of the set $\bar{G}^*$ in $\mathbb{R}^{D-d_e}$, denoted by  $\vol(\bar{G}^*)$, is non-zero. In some particular cases, when the global minimizer $\mvec{x}^*$ in \Cref{def:AREGO_choice_of_x^*} is on the boundary of $\mathcal{X}$, the corresponding simply connected component $G^*$ may be of dimension strictly lower than $(D-d_e)$ and, hence, $\vol(\bar{G}^*) = 0$; a case we need to sometimes exclude from our analysis.
\begin{definition}\label{def:G^*_degenerate}
Let $G^*$ and $\bar{G}^*$ be defined as in \Cref{def:AREGO_choice_of_x^*} and \eqref{eq: barGstar}, respectively. We say that $G^*$ is non-degenerate if $\vol(\bar{G}^*) > 0$.
\end{definition}

The definitions and assumptions introduced in this section are  illustrated next in \Cref{fig:mapping_Y_to_X_affine}.

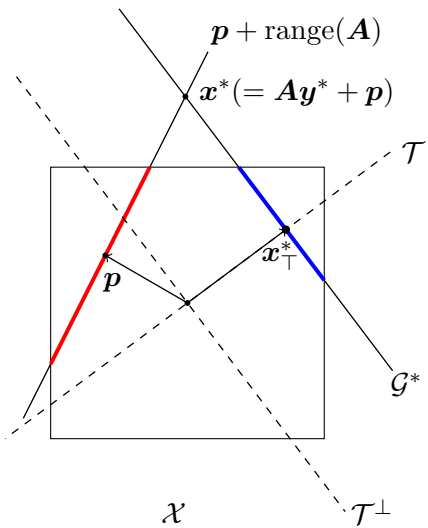
\begin{figure}[!t]
	\centering
	\begin{tikzpicture}[scale = 0.9]
	\draw (-2,-2) rectangle (2,2);
	\draw[line width = 0.5pt] (-1,13/3) -- (3,-1);
	\draw[line width = 0.5pt, ->] (0,0) -- (36/25,27/25);
	\draw[line width = 0.5pt, ->] (0,0) -- (-1.2,0.7);
	\draw[line width = 0.5pt] (-2.4,-1.7) -- (0.3,3.7);
	\draw[line width = 1.5pt, color = red] (-0.55,2) -- (-2,-0.9);
	\draw[line width = 0.5pt, dashed] (80/27, 20/9) -- (-72/27,-2);
	\draw[line width = 0.5pt, dashed] (-90/36, 90/27) -- (83/36,-83/27);
	\draw[line width = 1.5pt, color = blue] (2,1/3) -- (3/4,2);
	
	\draw[fill] (0,0) circle(1pt);
	\draw[fill] (36/25,27/25) circle(1.5pt);
	\draw[fill] (-0.03, 3.04) circle(1pt);
	\draw[fill] (-1.2, 0.7) circle(1pt);
	
	\node at (3.3,20/9) {$\mathcal{T}$};
	\node at (2.7,-83/27) {$\mathcal{T}^{\perp}$};
	\node at (34/25,0.7) {$\mvec{x}_{\top}^*$};
	\node at (-0.2,-3.1) {$\mathcal{X}$};
	\node at (3.2, -1.2) {$\mathcal{G}^*$};
	\node at (-1.1,0.3) {$\mvec{p}$};
	\node at (1.6,4) {$\mvec{p} + \range(\mtx{A})$};
	\node at (1.6,3.1) {$\mvec{x}^* (= \mtx{A}\mvec{y}^* + \mvec{p})$};
	
	\end{tikzpicture}
	\caption{Abstract illustration of the embedding of an affine $d$-dimensional subspace $\mvec{p}+\range(\mtx{A})$ into $\mathbb{R}^D$. The red line represents the feasible set of solutions along $\mvec{p}+\range(\mtx{A})$ and the blue line represents the set $G^*$. The random subspace intersects $\mathcal{G}^*$ at $\mvec{x}^*$, which is infeasible. }
	\label{fig:mapping_Y_to_X_affine}
\end{figure}

\paragraph*{Geometric description of the problem.}
\Cref{fig:mapping_Y_to_X_affine} sketches the linear mapping $ \mvec{y} \rightarrow \mtx{A}\mvec{y} + \mvec{p}$ that maps points from $\mathbb{R}^d$ to points in the affine subspace $\mvec{p}+\range(\mtx{A})$ in $\mathbb{R}^D$. This figure also illustrates the case of a non-degenerate 
simply-connected component $G^*$ of global minimizers (blue line; \Cref{def:AREGO_choice_of_x^*}), which here has dimension $D-d_e=1$. Degeneracy of 
$G^*$ (\Cref{def:G^*_degenerate}) would occur if $\mvec{x}^*_{\top}$ was a vertex of the domain $\mathcal{X}$, in which case the corresponding $G^*$ would be a singleton.

For \eqref{eq: AREGO} to be successful in solving the original problem \eqref{eq: GO}, \Cref{fig:mapping_Y_to_X_affine} illustrates it is sufficient that the red line segment  (the feasible set of (reduced) solutions in $\mathbb{R}^d$ mapped to $\mathbb{R}^D$)
intersects the blue line segment (the set $G^*$)\footnote{If $G^* = G$, this sufficient condition is also necessary; else, we need to check the other simply connected components of $G$ to decide whether \eqref{eq: AREGO} is successful or not.}. The blue and red line segments do not intersect in \Cref{fig:mapping_Y_to_X_affine},
but their prolongations outside $\mathcal{X}$ ($\mathcal{G}^*$ and $\mvec{p}+\range(\mtx{A})$) do\footnote{This is related to \cite[Theorem 2]{Wang2016}, which says that if the dimension of the embedded subspace ($d$) is greater than the effective dimension ($d_e$) of $f$ then $\mathcal{G}^*$ and $\mvec{p}+\range(\mtx{A})$ intersect with probability one. \citeauthor{Wang2016} \cite{Wang2016} have shown this result for the case $\mvec{p} = \mvec{0}$, but it can easily be generalized to arbitrary $\mvec{p}$.}. In \Cref{sec: charact_prelim}, we  review an existing characterization for a reduced minimizer ($\mvec{y}^*$ in \Cref{fig:mapping_Y_to_X_affine}), thus quantifying a specific intersection 
between the random subspace and $\mathcal{G}^*$. We then use this characterization
in \Cref{sec: estim_success} to derive a lower bound  on the probability of  $\mtx{A} \mvec{y}^* + \mvec{p}$ to belong to $G^*$, namely, to be feasible for the original problem \eqref{eq: GO}.

\subsection{Characterization of (unconstrained) minimizers in the reduced space}
\label{sec: charact_prelim}

This section summarizes results from \cite{Cartis2020} that characterize the distribution of a random reduced minimizer $\mvec{y}^*$ such that  $\mtx{A}\mvec{y}^*+\mvec{p}$ is an unconstrained minimizer of $f$.

Let $\mathcal{S}^* := \{ \mvec{y}^* \in \mathbb{R}^d : \mtx{A}\mvec{y}^* + \mvec{p} \in \mathcal{G}^* \}$, with $\mathcal{G}^*$ defined in \Cref{def:AREGO_choice_of_x^*}, be a subset of points $\mvec{y}^*$ corresponding to solutions of  minimizing $f$ over the entire $\mathbb{R}^D$.  With probability one, $\mathcal{S}^*$ is a singleton  if $d = d_e$ and has infinitely many points  if $d>d_e$ \cite[Corollary 3.3]{Cartis2020}. 
It is sufficient to find one of the reduced minimizers in $\mathcal{S}^*$, ideally one that  is easy to analyse, and that is close to the origin (i.e., the centre of the domain $\mathcal{X}$) in some norm so as to encourage the feasibility with respect to $\mathcal{X}$ of its image through $\mtx{A}$. An obvious candidate is the minimal Euclidean norm solution,
	\begin{equation} \label{eq: minimal_2_norm}
	\begin{aligned}
	\mvec{y}_2^{*}  = \argmin_{\mvec{y} \in \mathbb{R}^d} \;\;& \| \mvec{y} \|_2 \\
	\text{s.t.}\;\; & \mvec{y} \in \mathcal{S}^*.
	\end{aligned}
	\end{equation}

\begin{theorem}\cite[Theorem 3.1]{Cartis2020}\label{thm: By=z}
Suppose \Cref{ass:AREGO_fun_eff_dim} holds. 
Let $\mvec{x}^*$ be any global minimizer of \eqref{eq: GO} with Euclidean projection $\mvec{x}_\top^*$  on the effective subspace, and $\mvec{p}\in \mathcal{X}$, a given vector.
Let $\mtx{A}$ be a $D \times d$ Gaussian matrix. Then $\mvec{y}_2^{*}$ defined in \eqref{eq: minimal_2_norm} is given by
	\begin{equation} \label{eq: Euclidean_norm_sol_explicit_formula}
		\mvec{y}_2^* := \mtx{B}^T(\mtx{B}\mtx{B}^T)^{-1}\mvec{z}^*,
	\end{equation}
	which is the minimum Euclidean norm solution to the system  
	\begin{equation} \label{eq: By=z}
	\mtx{B} \mvec{y}^* = \mvec{z}^*,
	\end{equation}
	where  $ \mtx{B} = \mtx{U}^T \mtx{A}$ and  $\mvec{z}^* \in \mathbb{R}^{d_e} $ is uniquely defined by
	\begin{equation}\label{z*}
	\mtx{U}\mvec{z}^* = \mvec{x}_{\top}^* - \mvec{p}_{\top}, \,\,\text{with $\mvec{p}_{\top} = \mtx{U}\mtx{U}^T\mvec{p}$.}
	\end{equation}
\end{theorem}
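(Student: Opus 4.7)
The plan is to first recast $\mathcal{S}^*$ as the solution set of an underdetermined linear system, then invoke the standard Moore--Penrose formula for its minimum Euclidean-norm solution. The content is almost entirely linear algebra, with a single probabilistic ingredient (an a.s.\ full-rank claim for a Gaussian matrix).

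First I would unpack the defining condition of $\mathcal{S}^*$, namely $\mtx{A}\mvec{y}+\mvec{p}\in\mathcal{G}^*=\mvec{x}_\top^*+\mathcal{T}^\perp$. Applying $\mtx{U}\mtx{U}^T$ (the orthogonal projector onto $\mathcal{T}$) annihilates the $\mathcal{T}^\perp$ part, so the condition is equivalent to $\mtx{U}\mtx{U}^T(\mtx{A}\mvec{y}+\mvec{p}-\mvec{x}_\top^*)=\mvec{0}$. Since $\mtx{U}$ has orthonormal columns and hence trivial left null space, this simplifies to $\mtx{U}^T\mtx{A}\mvec{y}=\mtx{U}^T(\mvec{x}_\top^*-\mvec{p})$. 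Because $\mvec{x}_\top^*-\mvec{p}_\top\in\range(\mtx{U})$, there is a unique $\mvec{z}^*\in\mathbb{R}^{d_e}$ with $\mtx{U}\mvec{z}^*=\mvec{x}_\top^*-\mvec{p}_\top$; left-multiplying by $\mtx{U}^T$ and using $\mtx{U}^T\mtx{U}=\mtx{I}_{d_e}$ together with $\mtx{U}^T\mvec{p}_\perp=\mvec{0}$ shows $\mvec{z}^*=\mtx{U}^T(\mvec{x}_\top^*-\mvec{p})$, and hence $\mathcal{S}^*=\{\mvec{y}:\mtx{B}\mvec{y}=\mvec{z}^*\}$ with $\mtx{B}=\mtx{U}^T\mtx{A}$, which is \eqref{eq: By=z}.

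Next I would verify that $\mtx{B}\mtx{B}^T$ is almost surely invertible. Because $\mtx{A}$ has i.i.d.\ standard Gaussian entries and $\mtx{U}^T$ has orthonormal rows, the rotational invariance of the Gaussian distribution implies $\mtx{B}=\mtx{U}^T\mtx{A}$ is itself a $d_e\times d$ Gaussian matrix. Since $d_e\leq d$ by \Cref{ass:AREGO_fun_eff_dim}, $\mtx{B}$ has full row rank with probability one, so the system $\mtx{B}\mvec{y}=\mvec{z}^*$ is consistent and $(\mtx{B}\mtx{B}^T)^{-1}$ is well defined a.s.

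Finally I would apply the classical minimum-norm characterisation. Any solution of $\mtx{B}\mvec{y}=\mvec{z}^*$ decomposes as $\mvec{y}=\mvec{y}_2^{*}+\mvec{n}$ with $\mvec{n}\in\nll(\mtx{B})$, where $\mvec{y}_2^{*}:=\mtx{B}^T(\mtx{B}\mtx{B}^T)^{-1}\mvec{z}^*$ lies in $\range(\mtx{B}^T)=\nll(\mtx{B})^\perp$. Orthogonality gives $\|\mvec{y}\|_2^2=\|\mvec{y}_2^{*}\|_2^2+\|\mvec{n}\|_2^2$, so $\mvec{y}_2^{*}$ is the unique minimiser of \eqref{eq: minimal_2_norm}, proving \eqref{eq: Euclidean_norm_sol_explicit_formula}.

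The only non-trivial step is the a.s.\ full-rank claim for $\mtx{B}$; everything else is bookkeeping on the orthogonal decomposition $\mathbb{R}^D=\mathcal{T}\oplus\mathcal{T}^\perp$. I would expect the main pitfall to be notational: ensuring that $\mvec{z}^*$ is consistently interpreted as a vector in $\mathbb{R}^{d_e}$ (coordinates in the $\mtx{U}$-basis) rather than a vector in $\mathbb{R}^D$, and carefully distinguishing $\mvec{p}$, $\mvec{p}_\top$ and $\mtx{U}^T\mvec{p}$.
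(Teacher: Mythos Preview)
Your proposal is correct and follows essentially the same approach as the paper's proof (Appendix~B): both establish that $\mathcal{S}^*=\{\mvec{y}:\mtx{B}\mvec{y}=\mvec{z}^*\}$ via the projection identity $\mvec{v}\in\mathcal{T}^\perp\Leftrightarrow\mtx{U}\mtx{U}^T\mvec{v}=\mvec{0}$, and then invoke the standard Moore--Penrose minimum-norm formula. Your version is slightly more streamlined in that it treats the equivalence in one pass (whereas the paper splits it into two implications and routes through the full orthogonal matrix $\mtx{S}=(\mtx{U}\;\mtx{V})$), and you make explicit the a.s.\ invertibility of $\mtx{B}\mtx{B}^T$ and the orthogonal-decomposition argument for the minimum-norm solution, points the paper leaves implicit or defers to \Cref{thm: Wishart_is_nonsingular}.
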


\begin{proof}
See \Cref{app:proof_of_By=z}.
\end{proof}
\begin{rem} \label{rem: defined_B_and_z}
Note that $\mtx{B} = \mtx{U}^T\mtx{A}$ is a $d_e \times d$ Gaussian matrix, since $\mtx{U}$ has orthonormal columns (see \Cref{thm: orthog_inv_of_Gaussian_matrices}).
Also, \eqref{z*} implies $\| \mvec{z}^* \|_2 = \| \mvec{x}_{\top}^* - \mvec{p}_{\top}\|_2$.
\end{rem}

Using  \eqref{eq: Euclidean_norm_sol_explicit_formula} and various properties of Gaussian matrices,  \cite{Cartis2020} shows that the squared Euclidean norm of $\mvec{y}_2^*$ follows the (appropriately scaled) inverse chi-squared distribution.
\begin{theorem} (\cite[Theorem 3.7]{Cartis2020}) \label{thm: x^*_T/y^*_2_follow_chi_square}
Suppose \Cref{ass:AREGO_fun_eff_dim} holds. Let $\mvec{x}^*$ be any global minimizer of \eqref{eq: GO} and $\mvec{p}\in \mathcal{X}$ a given vector, with respective projections  $\mvec{x}_\top^*$ and
$\mvec{p}_\top$ on the effective subspace. Let $\mtx{A}$ be a $D \times d$ Gaussian matrix.
 Then,  $\mvec{y}^*_2$ defined in \eqref{eq: minimal_2_norm} satisfies
	$$ \frac{\| \mvec{x}_{\top}^* - \mvec{p}_{\top} \|^2_2}{\| \mvec{y}^*_2 \|^2_2} \sim \chi^2_{d-d_e+1} \quad \text{if } \mvec{x}_{\top}^* \neq \mvec{p}_{\top}.$$
If $\mvec{x}_{\top}^* = \mvec{p}_{\top}$, then $\mvec{y}^*_2 = \mvec{0}$ .
\end{theorem}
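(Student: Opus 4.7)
My plan is to carry the computation of $\|\mvec{y}_2^*\|_2^2$ through the explicit formula from \Cref{thm: By=z}, reduce the problem to one about a specific quadratic form in the inverse of a Wishart matrix, and then apply a standard rotation plus block-inversion argument. Using \eqref{eq: Euclidean_norm_sol_explicit_formula},
\begin{equation*}
\|\mvec{y}_2^*\|_2^2 = (\mvec{z}^*)^T (\mtx{B}\mtx{B}^T)^{-T}\mtx{B}\mtx{B}^T (\mtx{B}\mtx{B}^T)^{-1} \mvec{z}^* = (\mvec{z}^*)^T (\mtx{B}\mtx{B}^T)^{-1} \mvec{z}^*,
\end{equation*}
where, by \Cref{rem: defined_B_and_z}, $\mtx{B} = \mtx{U}^T \mtx{A}$ is a $d_e \times d$ Gaussian matrix and $\|\mvec{z}^*\|_2 = \|\mvec{x}_\top^* - \mvec{p}_\top\|_2$. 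If $\mvec{x}_\top^* = \mvec{p}_\top$, then $\mvec{z}^* = \mvec{0}$ and \eqref{eq: Euclidean_norm_sol_explicit_formula} yields $\mvec{y}_2^* = \mvec{0}$, settling the degenerate case.

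For the main case $\mvec{x}_\top^* \neq \mvec{p}_\top$, my aim is to show that
\begin{equation*}
\frac{\|\mvec{z}^*\|_2^2}{(\mvec{z}^*)^T (\mtx{B}\mtx{B}^T)^{-1} \mvec{z}^*} \sim \chi^2_{d-d_e+1}.
\end{equation*}
I would first exploit the orthogonal invariance of the Gaussian distribution: for any fixed $d_e \times d_e$ orthogonal matrix $\mtx{Q}$, $\mtx{Q}\mtx{B} \stackrel{law}{=} \mtx{B}$, hence $\mtx{Q}(\mtx{B}\mtx{B}^T)^{-1}\mtx{Q}^T \stackrel{law}{=} (\mtx{B}\mtx{B}^T)^{-1}$. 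Choosing $\mtx{Q}$ so that $\mtx{Q}\mvec{z}^*/\|\mvec{z}^*\|_2 = \mvec{e}_1$ (the first standard basis vector in $\mathbb{R}^{d_e}$), the quadratic form above reduces in law to $\|\mvec{z}^*\|_2^2 \cdot \mvec{e}_1^T(\mtx{B}\mtx{B}^T)^{-1}\mvec{e}_1$, i.e.\ the $(1,1)$ entry of the inverse Wishart matrix.

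The next step is to identify the distribution of this entry. Write $\mtx{B} = \begin{pmatrix} \mvec{b}_1^T \\ \mtx{B}_2 \end{pmatrix}$, with $\mvec{b}_1 \in \mathbb{R}^d$ standard Gaussian and $\mtx{B}_2$ a $(d_e-1)\times d$ Gaussian matrix, independent of $\mvec{b}_1$. Using the standard block-inverse formula for $\mtx{B}\mtx{B}^T$ (whose $(1,1)$ block is $\mvec{b}_1^T \mvec{b}_1$, off-diagonal block $\mtx{B}_2 \mvec{b}_1$, and lower-right block $\mtx{B}_2 \mtx{B}_2^T$), one finds
\begin{equation*}
\bigl[(\mtx{B}\mtx{B}^T)^{-1}\bigr]_{11} = \frac{1}{\mvec{b}_1^T(\mtx{I}_d - \mtx{P})\mvec{b}_1},
\end{equation*}
where $\mtx{P} = \mtx{B}_2^T(\mtx{B}_2 \mtx{B}_2^T)^{-1}\mtx{B}_2$ is the orthogonal projector onto $\range(\mtx{B}_2^T)$, which with probability one is a rank-$(d_e-1)$ projection, so $\mtx{I}_d - \mtx{P}$ has rank $d-d_e+1$. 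Conditional on $\mtx{B}_2$, since $\mvec{b}_1$ is independent of $\mtx{B}_2$ and standard Gaussian, the quadratic form $\mvec{b}_1^T(\mtx{I}_d-\mtx{P})\mvec{b}_1$ follows a $\chi^2_{d-d_e+1}$ distribution; as this conditional distribution does not depend on $\mtx{B}_2$, it is also the unconditional distribution. Taking reciprocals and reinstating the scaling factor $\|\mvec{z}^*\|_2^2 = \|\mvec{x}_\top^* - \mvec{p}_\top\|_2^2$ yields the claim.

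The main technical point is the block-inverse/projection identity together with the observation that the rank of $\mtx{I}_d - \mtx{P}$ is almost surely $d-d_e+1$; once that is in place, the remainder follows from orthogonal invariance and independence. Everything else is routine, and the argument also makes clear why the degrees of freedom are exactly $d-d_e+1$, which is consistent with the case $d = d_e$ where $(\mtx{B}\mtx{B}^T)^{-1}_{11} = 1/\chi^2_1$.
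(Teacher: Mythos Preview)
The paper does not actually prove this theorem; it is quoted verbatim from the companion paper \cite{Cartis2020} (Theorem~3.7 there), so there is no in-paper proof to compare against. Your argument is correct and is the classical route in multivariate analysis: express $\|\mvec{y}_2^*\|_2^2$ as the quadratic form $(\mvec{z}^*)^T(\mtx{B}\mtx{B}^T)^{-1}\mvec{z}^*$ in an inverse Wishart matrix, use left-orthogonal invariance of the Gaussian $\mtx{B}$ to reduce to the $(1,1)$ entry of $(\mtx{B}\mtx{B}^T)^{-1}$, and identify that entry via the Schur complement as $1/\bigl(\mvec{b}_1^T(\mtx{I}_d-\mtx{P})\mvec{b}_1\bigr)$ with $\mtx{P}$ the rank-$(d_e-1)$ projector onto $\range(\mtx{B}_2^T)$. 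Two small points you may wish to make explicit: (i) almost-sure invertibility of $\mtx{B}_2\mtx{B}_2^T$ needs $d\geq d_e-1$, which follows from $d\geq d_e$ in \Cref{ass:AREGO_fun_eff_dim} (this is exactly \Cref{thm: Wishart_is_nonsingular}); and (ii) the edge case $d_e=1$ has no $\mtx{B}_2$, so $\mtx{P}=\mtx{0}$ and the conclusion reads $\|\mvec{b}_1\|_2^2\sim\chi^2_d$, consistent with $d-d_e+1=d$.
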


\section{Estimating the success of the reduced problem}
\label{sec: estim_success}
This section derives lower bounds on the probability of success of \eqref{eq: AREGO}.  \Cref{lemma:AREGO_successful_y2star} lower bounds this probability  by that of a non-empty intersection between the random subspace $\mvec{p}+\range(\mtx{A})$ and an arbitrary simply-connected component $G^*$ of the set of global minimizers (\Cref{def:AREGO_choice_of_x^*}). This probability is further expressed in \Cref{thm:prob[success]>bound} in terms of a random vector $\mvec{w}$ that follows a multivariate $t$-distribution. From \Cref{sec: proba_success_pos} onwards, we derive positive and/or quantifiable lower bounds on the probability of success of \eqref{eq: AREGO}, while also trying to eliminate, wherever possible, the dependency of the lower bounds on the choice of $\mvec{p}$ and $G^*$.

\begin{lemma} \label{lemma:AREGO_successful_y2star}
Suppose  \Cref{ass:AREGO_fun_eff_dim} holds. Let $\mvec{x}^*$ be a(ny) global minimizer of \eqref{eq: GO}, $\mvec{p}\in \mathcal{X}$, a given vector, and $\mtx{A}$, a $D \times d$ Gaussian matrix.
Let $\mvec{y}_2^*$ be defined in \eqref{eq: minimal_2_norm}.
 The reduced problem \eqref{eq: AREGO} is successful in the sense of \Cref{def: successful_AREGO} if  $\mtx{A}\mvec{y}_2^*+\mvec{p} \in \mathcal{X}$, namely
\begin{equation}\label{eq:prob_there_exists_y}
    \prob[\text{\eqref{eq: AREGO} is successful}] \geq \prob[-\mvec{1} \leq \mtx{A}\mvec{y}_2^* + \mvec{p}\leq \mvec{1}].
\end{equation}
\end{lemma}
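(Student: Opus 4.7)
The plan is to show the deterministic implication: whenever $\mathbf{A}\mathbf{y}_2^* + \mathbf{p} \in \mathcal{X}$, the reduced problem (RP$\mathcal{X}$) is successful in the sense of \Cref{def: successful_AREGO}, i.e., $\mathbf{y}^* = \mathbf{y}_2^*$ itself furnishes a feasible reduced global minimizer. The probability inequality \eqref{eq:prob_there_exists_y} is then immediate by monotonicity of $\mathbb{P}$, since the event on the right-hand side implies the event on the left-hand side.

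To establish the deterministic implication, the first step is to use \Cref{thm: By=z} to show that $\mathbf{A}\mathbf{y}_2^* + \mathbf{p}$ lies in the affine subspace $\mathcal{G}^* = \mathbf{x}_\top^* + \mathcal{T}^\perp$. Indeed, by that theorem $\mathbf{B}\mathbf{y}_2^* = \mathbf{z}^*$ with $\mathbf{B} = \mathbf{U}^T\mathbf{A}$ and $\mathbf{U}\mathbf{z}^* = \mathbf{x}_\top^* - \mathbf{p}_\top$, so the Euclidean projection of $\mathbf{A}\mathbf{y}_2^* + \mathbf{p}$ onto the effective subspace satisfies
\[
\mathbf{U}\mathbf{U}^T(\mathbf{A}\mathbf{y}_2^* + \mathbf{p}) = \mathbf{U}\mathbf{B}\mathbf{y}_2^* + \mathbf{p}_\top = \mathbf{U}\mathbf{z}^* + \mathbf{p}_\top = \mathbf{x}_\top^*,
\]
which places $\mathbf{A}\mathbf{y}_2^* + \mathbf{p}$ in $\mathcal{G}^*$.

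Next, I would invoke the low-effective-dimensionality property: writing $\mathbf{A}\mathbf{y}_2^* + \mathbf{p} = \mathbf{x}_\top^* + \tilde{\mathbf{x}}$ with $\tilde{\mathbf{x}} \in \mathcal{T}^\perp$, equation \eqref{eq: f^*=f(x_top^*)} gives $f(\mathbf{A}\mathbf{y}_2^* + \mathbf{p}) = f(\mathbf{x}_\top^*) = f^*$. Combined with the hypothesis $\mathbf{A}\mathbf{y}_2^* + \mathbf{p} \in \mathcal{X}$, this is exactly the statement that $\mathbf{y}^* := \mathbf{y}_2^*$ satisfies both conditions of \Cref{def: successful_AREGO}, so \eqref{eq: AREGO} is successful.

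There is no substantive obstacle here; the lemma is essentially a corollary of \Cref{thm: By=z} combined with the defining property of functions with low effective dimensionality. The only mildly subtle point worth spelling out is that the argument yields a \emph{sufficient} (not necessary) condition for success, because the reduced problem could also be successful via a different intersection point with another simply connected component of $G$ (or with a different point of $G^*$); it is precisely this sufficiency that turns the deterministic implication into a lower bound rather than an equality in \eqref{eq:prob_there_exists_y}.
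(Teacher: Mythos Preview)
Your proof is correct and follows essentially the same approach as the paper's. The paper's argument is even more direct: it simply notes that by the very definition of $\mvec{y}_2^*$ in \eqref{eq: minimal_2_norm} (as a point of $\mathcal{S}^* = \{\mvec{y}^* : \mtx{A}\mvec{y}^* + \mvec{p} \in \mathcal{G}^*\}$), one has $\mtx{A}\mvec{y}_2^* + \mvec{p} \in \mathcal{G}^*$ and hence $f(\mtx{A}\mvec{y}_2^* + \mvec{p}) = f^*$, so your appeal to \Cref{thm: By=z} to re-derive membership in $\mathcal{G}^*$ is not needed (though it is correct).
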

\begin{proof}
This is an immediate consequence of \Cref{def: successful_AREGO} and \eqref{eq: minimal_2_norm}, as the latter implies $\mtx{A} \mvec{y}_2^* + \mvec{p} \in \mathcal{G}^*$ and so $f(\mtx{A} \mvec{y}_2^* + \mvec{p}) = f^*$. 
\end{proof}

Let us further express \eqref{eq:prob_there_exists_y} as follows. Let $\mtx{Q} = (\mtx{U} \; \mtx{V})$, where $\mtx{U}$ and $\mtx{V}$ are defined in \Cref{ass:AREGO_fun_eff_dim}. Since $\mtx{Q}$ is orthogonal, we have
\begin{equation}\label{eq:Ay=QQ^TAy}
    \mtx{A}\mvec{y}_2^*= \mtx{Q}\mtx{Q}^T\mtx{A}\mvec{y}_2^*= \mtx{Q} \begin{pmatrix} \mtx{U}^T \\ \mtx{V}^T \end{pmatrix} \mtx{A} \mvec{y}_2^*.
\end{equation}
Using  \eqref{eq: Euclidean_norm_sol_explicit_formula}, we get $\mtx{U}^T\mtx{A} \mvec{y}_2^* = \mvec{z}^*$. Letting 
\begin{equation}\label{w-def}
\mvec{w} := \mtx{V}^T\mtx{A} \mvec{y}_2^*,
\end{equation}
we get


\begin{equation} \label{eq:Ay_2^*=x+Vw}
    \mtx{A}\mvec{y}_2^* = \mtx{Q} \begin{pmatrix} \mvec{z}^* \\ \mvec{w} \end{pmatrix} = \begin{pmatrix} \mtx{U} & \mtx{V} \end{pmatrix} \begin{pmatrix} \mvec{z}^* \\ \mvec{w} \end{pmatrix} = \mtx{U}\mvec{z}^* + \mtx{V}\mvec{w} = \mvec{x}_{\top}^* - \mvec{p}_{\top} + \mtx{V}\mvec{w},
\end{equation}
where in the last equality, we used \eqref{z*}.
By substituting $\mvec{p} = \mvec{p}_{\top}+\mvec{p}_{\perp}$ and \eqref{eq:Ay_2^*=x+Vw} in \eqref{eq:prob_there_exists_y}, we obtain
\begin{equation}\label{eq:prob_Ay^2*_is_between_-1_and_1}
\begin{aligned} 
   \prob[\text{\eqref{eq: AREGO} is successful}] & \geq \prob[-\mvec{1}\leq \mtx{A}\mvec{y}_2^* + \mvec{p} \leq \mvec{1}] \\
   & = \prob[-\mvec{1} \leq \mvec{x}_{\top}^* -\mvec{p}_{\top} + \mtx{V}\mvec{w} + \mvec{p}_{\top}+\mvec{p}_{\perp} \leq \mvec{1}] \\
   & = \prob[-\mvec{1} \leq \mvec{x}_{\top}^* + \mvec{p}_{\perp} + \mtx{V}\mvec{w} \leq \mvec{1}]. 
\end{aligned}
\end{equation}

According to this derivation, all the randomness within the lower bound \eqref{eq:prob_Ay^2*_is_between_-1_and_1} is contained in the random vector $\mvec{w}$. The next theorem, derived in \Cref{app:pdf_w}, provides the probability density function of this random vector.

\begin{rem} 
Suppose that \Cref{ass:AREGO_fun_eff_dim} holds and recall \eqref{def:G}. If there exists $\mvec{x}^*\in G$  such that $\mvec{x}_{\top}^* = \mvec{p}_{\top}$, where the subscript represents the respective Euclidean projections on the effective subspace, then $f(\mvec{p}) = f(\mvec{p}_{\top}+\mvec{p}_{\perp}) = f(\mvec{x}_{\top}^* + \mvec{p}_{\perp}) = f^*$, where $\mvec{p}_{\perp}$ is the Euclidean projection of $\mvec{p}$ on the constant subspace $\mathcal{T}^{\perp}$ of the objective function. Thus  $\mvec{p}\in G$  so that, for any embedding $\mtx{A}$,  \eqref{eq: AREGO} is successful with the trivial solution $\mvec{y}^* = \mvec{0}$. Therefore, in our next result, without loss of generality, we  make the assumption $\mvec{x}_{\top}^* \neq \mvec{p}_{\top}$.
\end{rem}

\begin{theorem}[The p.d.f.~of $\mvec{w}$] \label{thm:pdf_of_w}
    Suppose that \Cref{ass:AREGO_fun_eff_dim} holds. 
    Let $\mvec{x}^*$ be a(ny) global minimizer of \eqref{eq: GO}, $\mvec{p}\in \mathcal{X}$, a given vector, and $\mtx{A}$, a $D \times d$ Gaussian matrix. Assume that $\mvec{p}_\top \neq \mvec{x}_\top^*$, where the subscript represents the Euclidean projection on the effective subspace. The random vector $\mvec{w}$ defined in \eqref{w-def} follows a  $(D-d_e)$-dimensional $t$-distribution with parameters $d-d_e+1$ and $\frac{\| \mvec{x}_{\top}^* - \mvec{p}_{\top}\|^2}{d-d_e+1}\mtx{I}$, and with p.d.f. $g(\mvec{\bar{w}})$ given by
    \begin{equation}\label{eq:pdf_of_w}
        g(\mvec{\bar{w}}) = \frac{1}{(\sqrt{\pi} \| \mvec{x}_{\top}^* - \mvec{p}_{\top}\|)^m} \left[ \frac{\Gamma(\frac{m+n}{2})}{\Gamma(\frac{n}{2})} \right] \left( 1 + \frac{\mvec{\bar{w}}^T\mvec{\bar{w}}}{\| \mvec{x}_{\top}^* - \mvec{p}_{\top}\|^2} \right)^{-(m+n)/2},
    \end{equation}
    where $m = D-d_e$ and $n = d-d_e+1$.
\end{theorem}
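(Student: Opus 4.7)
The plan is to decompose $\mtx{A}$ relative to the orthonormal bases $\mtx{U}$ and $\mtx{V}$ and exploit the well-known fact that the multivariate $t$-distribution arises as a Gaussian scale-mixture with an inverse-chi-squared scale, which matches precisely the pairing of \Cref{thm: By=z} and \Cref{thm: x^*_T/y^*_2_follow_chi_square}.

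First, set $\mtx{Q} = (\mtx{U}\;\mtx{V}) \in \mathbb{R}^{D\times D}$, which is orthogonal by \Cref{ass:AREGO_fun_eff_dim}, and write
\begin{equation*}
\mtx{Q}^T \mtx{A} = \begin{pmatrix} \mtx{U}^T\mtx{A} \\ \mtx{V}^T\mtx{A} \end{pmatrix} = \begin{pmatrix} \mtx{B} \\ \mtx{C} \end{pmatrix},
\end{equation*}
where $\mtx{B}\in\mathbb{R}^{d_e\times d}$ was already introduced in \Cref{thm: By=z} and $\mtx{C} := \mtx{V}^T\mtx{A}\in\mathbb{R}^{(D-d_e)\times d}$. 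By orthogonal invariance of the Gaussian distribution (invoked analogously to \Cref{rem: defined_B_and_z}), the stacked matrix $\mtx{Q}^T\mtx{A}$ is itself a $D\times d$ Gaussian matrix, so its two row-blocks $\mtx{B}$ and $\mtx{C}$ are \emph{independent} Gaussian matrices. Using \eqref{w-def} and \eqref{eq: Euclidean_norm_sol_explicit_formula}, I would rewrite
\begin{equation*}
\mvec{w} = \mtx{V}^T \mtx{A}\, \mvec{y}_2^* = \mtx{C}\, \mvec{y}_2^*, \qquad \text{with } \mvec{y}_2^* = \mtx{B}^T(\mtx{B}\mtx{B}^T)^{-1}\mvec{z}^*,
\end{equation*}
so that $\mvec{y}_2^*$ is a (measurable) function of $\mtx{B}$ alone and is independent of $\mtx{C}$.

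Next, I would condition on $\mtx{B}$. Since $\mtx{C}$ has i.i.d.\ standard normal entries and is independent of $\mvec{y}_2^*$, the conditional law of $\mvec{w}$ given $\mtx{B}$ is a multivariate Gaussian
\begin{equation*}
\mvec{w}\,\big|\,\mtx{B} \;\sim\; \mathcal{N}\!\bigl(\mvec{0},\; \|\mvec{y}_2^*\|^2\, \mtx{I}_{D-d_e}\bigr).
\end{equation*}
Equivalently, $\mvec{w} \stackrel{law}{=} \|\mvec{y}_2^*\|\,\mvec{Z}$ where $\mvec{Z}\sim \mathcal{N}(\mvec{0},\mtx{I}_{D-d_e})$ is independent of $\mtx{B}$ and hence of $\|\mvec{y}_2^*\|$. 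Setting $s := \|\mvec{x}_\top^* - \mvec{p}_\top\| > 0$ (nonzero by hypothesis) and $\nu := d-d_e+1$, \Cref{thm: x^*_T/y^*_2_follow_chi_square} yields $s^2/\|\mvec{y}_2^*\|^2 \sim \chi^2_\nu$. Therefore
\begin{equation*}
\mvec{w} \;\stackrel{law}{=}\; \frac{s}{\sqrt{\chi^2_\nu}}\,\mvec{Z} \;=\; \frac{s}{\sqrt{\nu}}\,\mvec{Z}\,\sqrt{\frac{\nu}{\chi^2_\nu}},
\end{equation*}
with $\mvec{Z}$ and $\chi^2_\nu$ independent. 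This is exactly the canonical construction of a multivariate $t$-distribution in dimension $m := D-d_e$ with $\nu$ degrees of freedom and scale matrix $\Sigma = (s^2/\nu)\,\mtx{I}_m$.

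Finally, I would substitute into the standard p.d.f.\ of such a multivariate $t$-distribution, $f(\mvec{\bar{w}}) = \frac{\Gamma((\nu+m)/2)}{\Gamma(\nu/2)(\nu\pi)^{m/2}|\Sigma|^{1/2}} \bigl(1 + \tfrac{1}{\nu}\mvec{\bar{w}}^T\Sigma^{-1}\mvec{\bar{w}}\bigr)^{-(\nu+m)/2}$. With $|\Sigma|^{1/2} = s^m/\nu^{m/2}$ and $\tfrac{1}{\nu}\Sigma^{-1} = s^{-2}\mtx{I}_m$, the factors of $\nu$ cancel and one obtains \eqref{eq:pdf_of_w}. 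The main subtlety is the independence argument in the second step: one must notice that although $\mvec{y}_2^*$ involves $\mtx{A}$ through $\mtx{B}$, the block $\mtx{C} = \mtx{V}^T\mtx{A}$ is genuinely independent of $\mtx{B}$ (because of the orthogonal invariance of the Gaussian law together with $\mtx{U}^T\mtx{V} = \mvec{0}$), so that $\|\mvec{y}_2^*\|$ and the direction $\mvec{Z}$ decouple and the Gaussian/inverse-chi-squared mixture identifies $\mvec{w}$ as a multivariate $t$-variable.
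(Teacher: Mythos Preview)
Your proof is correct and, in fact, more direct than the paper's. Both approaches rest on the same two ingredients: the independence of $\mtx{B} = \mtx{U}^T\mtx{A}$ and $\mtx{C} = \mtx{V}^T\mtx{A}$ (from $\mtx{U}^T\mtx{V} = \mtx{0}$), and \Cref{thm: x^*_T/y^*_2_follow_chi_square} for the law of $\|\mvec{y}_2^*\|^2$. Where they diverge is in how these are combined. The paper proceeds in three separate steps: it first derives the distribution of $\|\mvec{w}\|^2$ via a Rayleigh-quotient argument (obtaining an $F$-distribution), then proves independently that $\mvec{w}$ is spherically distributed, and finally invokes a general formula for the density of a spherical random vector in terms of the density of its norm. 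You bypass all of this by conditioning on $\mtx{B}$: since $\mtx{C}$ is an independent Gaussian matrix and $\mvec{y}_2^*$ is $\mtx{B}$-measurable, the conditional law $\mvec{w}\mid\mtx{B} \sim \mathcal{N}(\mvec{0}, \|\mvec{y}_2^*\|^2\mtx{I}_m)$ immediately exhibits $\mvec{w}$ as a Gaussian scale mixture, and \Cref{thm: x^*_T/y^*_2_follow_chi_square} identifies the scale as inverse-chi-squared---precisely the canonical stochastic representation of the multivariate $t$. This is cleaner and makes the spherical symmetry and the norm distribution fall out simultaneously rather than being established separately. The paper's route, by contrast, makes the intermediate $F$-distribution of $\|\mvec{w}\|^2$ explicit, which is not needed here but is reused later.
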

\begin{proof} See \Cref{app:pdf_w}.
    
\end{proof}

The remainder of this section aims at answering the two following questions: \emph{Is the probability of success of \eqref{eq: AREGO} positive for any $\mvec{p}$? If yes, can we derive a positive lower bound on the probability of success of \eqref{eq: AREGO} that does not depend on $\mvec{p}$?}  We  show that both questions can be answered positively, and use this extensively in our global convergence analysis in \Cref{sec: Convergence}.

\subsection{Positive probability of success of the reduced problem \texorpdfstring{\eqref{eq: AREGO}}{(RPX)} } \label{sec: proba_success_pos}
We first summarize the above analysis in the following corollary.
\begin{corollary}\label{thm:prob[success]>bound}
   Suppose that \Cref{ass:AREGO_fun_eff_dim} holds. 
    Let $\mvec{x}^*$ be a(ny) global minimizer of \eqref{eq: GO}, $\mvec{p}\in \mathcal{X}$, a given vector, and $\mtx{A}$, a $D \times d$ Gaussian matrix. Assume that $\mvec{p}_\top \neq \mvec{x}_\top^*$, where the subscript represents the Euclidean projection on the effective subspace. Then
    \begin{equation} \label{eq:prob[success]=>prob[-1<x+Vw<1]}
        \prob[\text{\eqref{eq: AREGO} is successful}\,] \geq \prob(-\mvec{1} \leq \mvec{x}_{\top}^* + \mvec{p}_{\perp} + \mtx{V}\mvec{w} \leq \mvec{1}),
    \end{equation}
    where $\mvec{w}$ is a random vector that follows a  $(D-d_e)$-dimensional $t$-distribution with parameters $d-d_e+1$ and $\frac{\| \mvec{x}_{\top}^* - \mvec{p}_{\top}\|^2}{d-d_e+1}\mtx{I}$.
\end{corollary}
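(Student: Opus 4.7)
The plan is to recognise that this corollary is essentially a bookkeeping step that assembles three results already in place in the excerpt: \Cref{lemma:AREGO_successful_y2star}, the algebraic rewriting performed in equations \eqref{eq:Ay=QQ^TAy}--\eqref{eq:prob_Ay^2*_is_between_-1_and_1}, and \Cref{thm:pdf_of_w}. So the proof will be short: I only need to string these together cleanly, and there is no substantive obstacle to overcome here — all of the heavy lifting (the minimum-Euclidean-norm characterization of $\mvec{y}_2^*$ via \Cref{thm: By=z}, and the $t$-distribution of $\mvec{w}$) has been done beforehand.

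First, I would invoke \Cref{lemma:AREGO_successful_y2star} to get the starting inequality
\[
\prob[\text{\eqref{eq: AREGO} is successful}] \;\geq\; \prob[-\mvec{1} \leq \mtx{A}\mvec{y}_2^* + \mvec{p}\leq \mvec{1}],
\]
which holds because $\mtx{A}\mvec{y}_2^* + \mvec{p} \in \mathcal{G}^*$ by construction of $\mvec{y}_2^*$ in \eqref{eq: minimal_2_norm}, so feasibility with respect to $\mathcal{X}$ is sufficient to make \eqref{eq: AREGO} successful.

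Next I would rewrite the event on the right-hand side in terms of $\mvec{w}$. Using the orthogonal matrix $\mtx{Q} = (\mtx{U}\;\mtx{V})$ of \Cref{ass:AREGO_fun_eff_dim}, decompose $\mtx{A}\mvec{y}_2^* = \mtx{Q}\mtx{Q}^T \mtx{A}\mvec{y}_2^* = \mtx{U}(\mtx{U}^T \mtx{A}\mvec{y}_2^*) + \mtx{V}(\mtx{V}^T \mtx{A}\mvec{y}_2^*)$. By \Cref{thm: By=z}, $\mtx{U}^T \mtx{A}\mvec{y}_2^* = \mtx{B}\mvec{y}_2^* = \mvec{z}^*$ and $\mtx{U}\mvec{z}^* = \mvec{x}_\top^* - \mvec{p}_\top$; by definition \eqref{w-def}, $\mtx{V}^T\mtx{A}\mvec{y}_2^* = \mvec{w}$. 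Hence $\mtx{A}\mvec{y}_2^* = \mvec{x}_\top^* - \mvec{p}_\top + \mtx{V}\mvec{w}$. Substituting $\mvec{p} = \mvec{p}_\top + \mvec{p}_\perp$ into the event $\{-\mvec{1} \le \mtx{A}\mvec{y}_2^* + \mvec{p} \le \mvec{1}\}$, the $\pm\mvec{p}_\top$ contributions cancel, leaving
\[
\prob[-\mvec{1} \leq \mtx{A}\mvec{y}_2^* + \mvec{p}\leq \mvec{1}] \;=\; \prob[-\mvec{1} \leq \mvec{x}_\top^* + \mvec{p}_\perp + \mtx{V}\mvec{w} \leq \mvec{1}],
\]
which is exactly \eqref{eq:prob[success]=>prob[-1<x+Vw<1]}.

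Finally, to justify the distributional statement for $\mvec{w}$, I would simply appeal to \Cref{thm:pdf_of_w}, whose hypothesis $\mvec{p}_\top \neq \mvec{x}_\top^*$ matches the assumption in the corollary and which identifies $\mvec{w}$ as a $(D-d_e)$-dimensional $t$-distributed random vector with parameters $d-d_e+1$ and $\tfrac{\|\mvec{x}_\top^* - \mvec{p}_\top\|^2}{d-d_e+1}\mtx{I}$. Combining the three steps yields the claim. The only point to watch is ensuring the randomness (all coming from $\mtx{A}$) is carried consistently through the identifications $\mvec{y}_2^* \mapsto \mtx{A}\mvec{y}_2^* \mapsto \mvec{w}$, but since every equality above is deterministic given $\mtx{A}$, the probability inequalities pass through without modification.
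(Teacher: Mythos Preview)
Your proposal is correct and follows exactly the paper's own argument: the paper's proof is the one-liner ``The result follows from derivations \eqref{eq:prob_there_exists_y}--\eqref{eq:prob_Ay^2*_is_between_-1_and_1} and \Cref{thm:pdf_of_w},'' which is precisely the three-step assembly you describe. There is nothing to add.
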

\begin{proof}
The result follows from derivations \eqref{eq:prob_there_exists_y}--\eqref{eq:prob_Ay^2*_is_between_-1_and_1} and \Cref{thm:pdf_of_w}.
\end{proof}


We need the following additional assumption.

\begin{assump}\label{ass:G^*_is_nondegenerate}
    Assume that \Cref{ass:AREGO_fun_eff_dim} holds,  and that there is  a set $G^*$ defined in \Cref{def:AREGO_choice_of_x^*} that is non-degenerate according to  \Cref{def:G^*_degenerate}. 
\end{assump}

\begin{theorem}\label{thm: positiveprobasuccess}
Suppose that \Cref{ass:G^*_is_nondegenerate} holds, and let $\mtx{A}$ be a $D \times d$ Gaussian matrix. Then, for any $\mvec{p} \in \mathcal{X}$, 
\begin{equation}\label{eq:prob_RPX_succ>0}
    \prob[\text{\eqref{eq: AREGO} is successful}\,] > 0.
\end{equation}
\end{theorem}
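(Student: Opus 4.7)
The plan is to combine \Cref{thm:prob[success]>bound} with the observation that the multivariate $t$-distribution assigns positive mass to any set of positive Lebesgue measure. Fix a global minimizer $\mvec{x}^*\in G$ such that the associated $G^*$ from \Cref{def:AREGO_choice_of_x^*} is non-degenerate; such an $\mvec{x}^*$ exists by \Cref{ass:G^*_is_nondegenerate}, and in particular $\vol(\bar{G}^*)>0$ where $\bar{G}^*$ is as in \eqref{eq: barGstar}. We will split on whether $\mvec{p}_\top = \mvec{x}_\top^*$ or not.

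If $\mvec{p}_\top = \mvec{x}_\top^*$, then as noted in the remark preceding \Cref{thm:pdf_of_w}, the point $\mvec{p}$ itself lies in $G$, so \eqref{eq: AREGO} is successful with $\mvec{y}^* = \mvec{0}$ deterministically and the probability equals one.

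Suppose instead that $\mvec{p}_\top \neq \mvec{x}_\top^*$. Then \Cref{thm:prob[success]>bound} applies and gives
\begin{equation*}
    \prob[\text{\eqref{eq: AREGO} is successful}\,] \;\geq\; \prob\bigl(-\mvec{1} \leq \mvec{x}_\top^* + \mvec{p}_\perp + \mtx{V}\mvec{w} \leq \mvec{1}\bigr),
\end{equation*}
where $\mvec{w}$ has a multivariate $t$-distribution on $\mathbb{R}^{D-d_e}$ whose density \eqref{eq:pdf_of_w} is continuous and strictly positive everywhere. Since $\mvec{p}_\perp \in \mathcal{T}^\perp = \range(\mtx{V})$, we have $\mvec{p}_\perp = \mtx{V}\tilde{\mvec{p}}$ with $\tilde{\mvec{p}} := \mtx{V}^T \mvec{p}_\perp \in \mathbb{R}^{D-d_e}$, so that the inequality $-\mvec{1} \leq \mvec{x}_\top^* + \mtx{V}(\tilde{\mvec{p}}+\mvec{w}) \leq \mvec{1}$ is equivalent to $\mvec{w} + \tilde{\mvec{p}} \in \bar{G}^*$, i.e., $\mvec{w} \in \bar{G}^* - \tilde{\mvec{p}}$.

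By translation invariance of Lebesgue measure on $\mathbb{R}^{D-d_e}$, the set $\bar{G}^* - \tilde{\mvec{p}}$ has the same positive volume as $\bar{G}^*$. Since the density of $\mvec{w}$ is strictly positive on $\mathbb{R}^{D-d_e}$, its integral over any Borel set of positive Lebesgue measure is strictly positive, and so
\begin{equation*}
    \prob[\text{\eqref{eq: AREGO} is successful}\,] \;\geq\; \int_{\bar{G}^* - \tilde{\mvec{p}}} g(\bar{\mvec{w}})\, d\bar{\mvec{w}} \;>\; 0,
\end{equation*}
establishing \eqref{eq:prob_RPX_succ>0}. There is no real obstacle here; the only mild subtlety is to reduce the condition in \eqref{eq:prob[success]=>prob[-1<x+Vw<1]} to the set $\bar{G}^*$, for which we exploit the fact that $\mvec{p}_\perp$ lies in $\range(\mtx{V})$ so that the shift can be absorbed as a translation in the $\mvec{w}$-variable.
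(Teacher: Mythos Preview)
Your proof is correct and follows essentially the same approach as the paper's own proof: both split into a trivial case where $\mvec{p}$ is already a global minimizer (you via $\mvec{p}_\top = \mvec{x}_\top^*$, the paper via $\mvec{p}\in G$) and otherwise apply \Cref{thm:prob[success]>bound}, rewrite the event as $\mvec{w}\in -\mtx{V}^T\mvec{p}+\bar G^*$, and conclude by integrating the strictly positive $t$-density over a translate of $\bar G^*$ of positive volume. Your $\tilde{\mvec{p}}=\mtx{V}^T\mvec{p}_\perp$ equals $\mtx{V}^T\mvec{p}$, so the two reductions coincide exactly.
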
 
\begin{proof}
We consider two cases,  $\mvec{p}\in G$ and $\mvec{p} \in \mathcal{X}\setminus G$. Firstly, assume that $\mvec{p} \in G$. Then, $ \prob[\text{\eqref{eq: AREGO} is successful}\,] = 1$ since taking $\mvec{y} = \mvec{0}$ in \eqref{eq: AREGO} yields $f(\mvec{p}) = f^*$.

Assume now that $\mvec{p}\in \mathcal{X}\setminus G$. \Cref{ass:G^*_is_nondegenerate} implies that there exists a global minimizer $\mvec{x}^*$ and associated $G^*$ for which $\vol(\bar{G}^*) > 0$,
where $G^*$ and $\bar{G}^*$ are defined in \Cref{def:AREGO_choice_of_x^*} and \eqref{eq: barGstar}, respectively.
Using \eqref{eq:prob[success]=>prob[-1<x+Vw<1]} with this particular $\mvec{x}^*$ and noting that $\mvec{p}_{\perp} = \mtx{V}\mtx{V}^T\mvec{p}$ gives us
\begin{equation}
    \begin{aligned} 
        \prob[\text{\eqref{eq: AREGO} is successful}] & \geq \prob[-\mvec{1} \leq \mvec{x}_{\top}^* + \mtx{V}(\mtx{V}^T\mvec{p} + \mvec{w}) \leq \mvec{1}] \\
        & = \prob[\mtx{V}^T\mvec{p} + \mvec{w} \in \{\mvec{g} \in \mathbb{R}^{D-d_e} : -\mvec{1} \leq \mvec{x}_{\top}^* + \mtx{V}\mvec{g} \leq \mvec{1} \}] \\
        & = \prob[\mtx{V}^T\mvec{p} + \mvec{w} \in \bar{G}^*] \\
        & = \prob[\mvec{w} \in -\mtx{V}^T \mvec{p} + \bar{G}^*] \\
        & = \int_{-\mtx{V}^T \mvec{p} + \bar{G}^*} g(\mvec{\bar{w}}) d\mvec{\bar{w}}, \label{eq:in_to_bound}
    \end{aligned}
\end{equation}
where $g(\mvec{\bar{w}})$ is the p.d.f. of $\mvec{w}$ given in \eqref{eq:pdf_of_w}. The latter integral is positive since $g(\mvec{\bar{w}}) > 0$ for any $\mvec{\bar{w}} \in \mathbb{R}^{D-d_e}$ and since $\vol(-\mtx{V}^T\mvec{p}+\bar{G}^*) = \vol(\bar{G}^*) > 0$ (invariance of volumes under translations) by \Cref{ass:G^*_is_nondegenerate}. 
\end{proof}

Note that the proof of \Cref{thm: positiveprobasuccess}  illustrates that the success probability of 
\eqref{eq: AREGO}, though positive, depends on the choice of $\mvec{p}$\footnote{When $\|\mvec{x}_\top^* - \mvec{p}_\top\| \rightarrow 0$, the multivariate $t$-distribution in \Cref{thm:prob[success]>bound} becomes degenerate. 
Thus it is challenging to derive a lower bound on the integral \eqref{eq:in_to_bound} that is uniformly bounded away from zero with respect to $\mvec{p}$.}.
Next, under additional problem assumptions, we derive  lower bounds on the success probability of \eqref{eq: AREGO} that are independent of $\mvec{p}$ and/or quantifiable.

\subsection{Quantifying the success probability of \texorpdfstring{\eqref{eq: AREGO}}{(RPX)} in the special case of coordinate-aligned effective subspace} \label{sec: aligned}

Provided the effective subspace $\mathcal{T}$ is aligned with coordinate axes and without loss of generality, we can write the orthonormal matrices $\mtx{U}$ and $\mtx{V}$, whose columns span $\mathcal{T}$ and $\mathcal{T}^\perp$, as $\mtx{U} = [\mtx{I}_{d_e} \; \mtx{0}]^T$ and $\mtx{V} = [\mtx{0} \; \mtx{I}_{D-d_e}]^T$.

\begin{theorem}\label{thm:RP_succ_special_case}
Let \Cref{ass:AREGO_fun_eff_dim} hold with $\mtx{U} = [\mtx{I}_{d_e} \; \mtx{0}]^T$ and $\mtx{V} = [\mtx{0} \; \mtx{I}_{D-d_e}]^T$. 
  Let $\mvec{x}^*$ be a(ny) global minimizer of \eqref{eq: GO}, $\mvec{p}\in \mathcal{X}$, a given vector, and $\mtx{A}$, a $D \times d$ Gaussian matrix. Assume that $\mvec{p}_\top \neq \mvec{x}_\top^*$, where the subscript represents the Euclidean projection on the effective subspace.  Then
\begin{equation} \label{eq:prob[success]=>prob[-1-p<w<1-p]}
    \prob[\text{\eqref{eq: AREGO} is successful}\,] \geq \prob[-\mvec{1}- \mvec{p}_{d_e+1:D} \leq \mvec{w} \leq \mvec{1}- \mvec{p}_{d_e+1:D}],
\end{equation}
where $\mvec{w}$ is a random vector that follows a  $(D-d_e)$-dimensional $t$-distribution with parameters $d-d_e+1$ and $\frac{\| \mvec{x}_{\top}^* - \mvec{p}_{\top}\|^2}{d-d_e+1}\mtx{I}$.
\end{theorem}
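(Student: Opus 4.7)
The plan is to apply \Cref{thm:prob[success]>bound} and then exploit the assumed coordinate alignment to reduce the $D$-dimensional (deterministic-plus-random) box constraint to a purely $(D-d_e)$-dimensional constraint on $\mvec{w}$ alone. The key observation will be that, under $\mtx{U} = [\mtx{I}_{d_e}\;\mtx{0}]^T$ and $\mtx{V} = [\mtx{0}\;\mtx{I}_{D-d_e}]^T$, the sum $\mvec{x}_{\top}^* + \mvec{p}_{\perp} + \mtx{V}\mvec{w}$ splits into a deterministic block (its first $d_e$ entries) that is automatically feasible, plus a random block (its last $D-d_e$ entries) that carries all the stochastic content.

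More precisely, I would first invoke \Cref{thm:prob[success]>bound}, which gives
\[
\prob[\text{\eqref{eq: AREGO} is successful}\,] \geq \prob(-\mvec{1} \leq \mvec{x}_{\top}^* + \mvec{p}_{\perp} + \mtx{V}\mvec{w} \leq \mvec{1}),
\]
with $\mvec{w}$ having the $(D-d_e)$-dimensional $t$-distribution specified there. Substituting the coordinate-aligned forms of $\mtx{U}$ and $\mtx{V}$ into the projections yields
\[
\mvec{x}_{\top}^* = \mtx{U}\mtx{U}^T \mvec{x}^* = \begin{pmatrix} \mvec{x}^*_{1:d_e} \\ \mvec{0}_{D-d_e} \end{pmatrix}, \qquad \mvec{p}_{\perp} = \mtx{V}\mtx{V}^T \mvec{p} = \begin{pmatrix} \mvec{0}_{d_e} \\ \mvec{p}_{d_e+1:D} \end{pmatrix}, \qquad \mtx{V}\mvec{w} = \begin{pmatrix} \mvec{0}_{d_e} \\ \mvec{w} \end{pmatrix},
\]
so that the sum $\mvec{x}_{\top}^* + \mvec{p}_{\perp} + \mtx{V}\mvec{w}$ has first $d_e$ entries equal to $\mvec{x}^*_{1:d_e}$ and last $D-d_e$ entries equal to $\mvec{p}_{d_e+1:D} + \mvec{w}$.

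Finally, I would split the $D$-dimensional box constraint coordinate-wise. On the first $d_e$ coordinates it reads $-\mvec{1} \leq \mvec{x}^*_{1:d_e} \leq \mvec{1}$, which holds deterministically because $\mvec{x}^* \in \mathcal{X} = [-1,1]^D$; hence this block contributes no probabilistic restriction. On the last $D-d_e$ coordinates it becomes $-\mvec{1} - \mvec{p}_{d_e+1:D} \leq \mvec{w} \leq \mvec{1} - \mvec{p}_{d_e+1:D}$, which is precisely the event in \eqref{eq:prob[success]=>prob[-1-p<w<1-p]}. No serious obstacle is anticipated: once the coordinate alignment is used and the feasibility of $\mvec{x}^*$ is invoked to discharge the first block of inequalities, the argument reduces to a short bookkeeping computation.
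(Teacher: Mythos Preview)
Your proposal is correct and follows essentially the same approach as the paper's proof: invoke \Cref{thm:prob[success]>bound}, substitute the block forms of $\mvec{x}_{\top}^*$, $\mvec{p}_{\perp}$ and $\mtx{V}\mvec{w}$ arising from the coordinate alignment, and then discharge the first $d_e$ coordinates using $\mvec{x}^*\in\mathcal{X}$. The paper's write-up differs only in presentation (it writes the split as a single block equality rather than describing it in words).
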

\begin{proof}
For $\mvec{x^*}\in G^*$, we have
\begin{equation}\label{eq:x_top^*=x_1:d_e}
    \mvec{x}_{\top}^* = \mtx{U} \mtx{U}^T \mvec{x}^* = \begin{pmatrix}
    \mtx{I} & \mtx{0} \\
    \mtx{0} & \mtx{0}
    \end{pmatrix} \mvec{x}^* = \begin{pmatrix} \mvec{x}^*_{1:d_e} \\ \mvec{0} \end{pmatrix}.
\end{equation}
Furthermore,
\begin{equation*}
    \mvec{p}_{\perp} = \mtx{V} \mtx{V}^T \mvec{p}  = \begin{pmatrix}
    \mtx{0} & \mtx{0} \\
    \mtx{0} & \mtx{I}
    \end{pmatrix} \mvec{p} = \begin{pmatrix} \mvec{0} \\ \mvec{p}_{d_e+1:D} \end{pmatrix}.
\end{equation*}
Note that $\mvec{x}^* \in [-1,1]^{D}$ implies that $\mvec{x}^*_{1:d_e} \in [-1,1]^{d_e}$.
\Cref{thm:prob[success]>bound} then yields
\begin{alignat*}{2}
    \prob[\text{\eqref{eq: AREGO} is successful}] & \geq \prob(-\mvec{1} \leq \mvec{x}_{\top}^* + \mvec{p}_{\perp}+ \mtx{V} \mvec{w} \leq \mvec{1}) \\
    & = \prob\left[\begin{pmatrix} -\mvec{1} \\ -\mvec{1} \end{pmatrix} \leq \begin{pmatrix} \mvec{x}^*_{1:d_e} \\ \mvec{0} \end{pmatrix} + 
    \begin{pmatrix} \mvec{0} \\ \mvec{p}_{d_e+1:D} \end{pmatrix} +
    \begin{pmatrix} \mtx{0} \\ \mtx{I} \end{pmatrix} \mvec{w} \leq \begin{pmatrix} \mvec{1} \\ \mvec{1} \end{pmatrix}\right]  \\
    (\text{since $\mvec{x}^*_{1:d_e} \in [-1,1]^{d_e}$}) & = \prob[-\mvec{1} \leq \mvec{p}_{d_e+1:D} + \mvec{w} \leq \mvec{1}], 
\end{alignat*}
which immediately gives \eqref{eq:prob[success]=>prob[-1-p<w<1-p]}.
\end{proof}
Note that the right-hand side of \eqref{eq:prob[success]=>prob[-1-p<w<1-p]} can be written as the integral of the p.d.f.~of $\mvec{w}$ over the hyperrectangular region 
$-\mvec{1} - \mvec{p}_{d_e+1:D} \leq  \mvec{w} \leq \mvec{1} - \mvec{p}_{d_e+1:D} $. Instead of directly computing this integral, we analyse its asymptotic behaviour for large $D$, assuming that $d_e$ and $d$ are fixed. We obtain the following main result, with its proof provided in \Cref{app:proofs_of_corollaries}.

\begin{theorem}\label{cor:prob[RPXis_succ]=Omega_general_p}
Let \Cref{ass:AREGO_fun_eff_dim} hold with  $\mtx{U} = [\mtx{I}_{d_e} \; \mtx{0}]^T$ and $\mtx{V} = [\mtx{0} \; \mtx{I}_{D-d_e}]^T$. Let $d_e$ and $d$ be fixed, and let
$\mtx{A}$ be a $D \times d$ Gaussian matrix. For all $\mvec{p} \in \mathcal{X}$, we have
\begin{equation} \label{eq:prob[success]=>tau_in_corollary}
    \prob[\text{\eqref{eq: AREGO} is successful}\,] \geq \tau > 0,
\end{equation}
where $\tau$ satisfies
    \begin{equation}\label{eq:asym_exp_general_p}
        \text{$\tau = \Theta\left(\frac{\log(D-d_e+1)^{\frac{d-1}{2}}}{2^{D-d_e} \cdot(D-d_e+1)^{d_e}}\right)$ as $D \rightarrow \infty$,}
    \end{equation}
    and the constants in $\Theta (\cdot)$ depend only on $d_e$ and $d$.
\end{theorem}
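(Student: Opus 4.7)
The plan is to lower-bound the integral in \Cref{thm:RP_succ_special_case} uniformly over $\mvec{p}\in \mathcal{X}$, then extract the leading asymptotic via Laplace's method. Write $\nu := d-d_e+1$, $m := D-d_e$ and $\sigma := \|\mvec{x}_\top^* - \mvec{p}_\top\|$. Using the standard stochastic representation of the multivariate $t$-distribution, I would write $\mvec{w} = \sigma \mvec{Y}/\sqrt{V}$ with $\mvec{Y}\sim N(\mvec{0}, \mtx{I}_m)$ independent of $V\sim \chi^2_\nu$; conditionally on $V$, the components of $\mvec{w}$ are independent normals, so the bound in \eqref{eq:prob[success]=>prob[-1-p<w<1-p]} factorises as
\begin{equation*}
\mathbb{E}_V\!\left[\prod_{i=1}^{m}\bigl(\Phi(\sqrt{V}(1-p_{d_e+i})/\sigma) - \Phi(-\sqrt{V}(1+p_{d_e+i})/\sigma)\bigr)\right].
\end{equation*}

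Next, I would reduce to a worst-case $\mvec{p}$ by monotonicity. Each factor equals the probability that a standard normal lies in an interval of length $2\sqrt{V}/\sigma$ centred at $-p_{d_e+i}\sqrt{V}/\sigma$; by symmetry and unimodality of the Gaussian density this factor is minimised at $p_{d_e+i}=\pm 1$, giving the common bound $\Phi(2\sqrt{V}/\sigma) - 1/2$. The resulting expectation $\mathbb{E}_V[(\Phi(2\sqrt{V}/\sigma)-1/2)^m]$ is monotone decreasing in $\sigma$, and $\sigma \leq 2\sqrt{d_e}$ since both $\mvec{x}_\top^*$ and $\mvec{p}_\top$ lie in $[-1,1]^{d_e}$. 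Substituting $\sigma = 2\sqrt{d_e}$ yields the $\mvec{p}$-independent candidate
\begin{equation*}
\tau = 2^{-m}\,\mathbb{E}_V\!\bigl[h(\sqrt{V/d_e})^m\bigr], \qquad h(u) := 2\Phi(u)-1.
\end{equation*}

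The remaining work is a Laplace-type evaluation of $I(m) := \int_0^\infty h(u)^m u^{\nu-1}e^{-d_e u^2/2}\,du$, obtained from $\tau$ after the change of variables $u = \sqrt{V/d_e}$ (up to a factor of $2^{-m}$ and constants). I would define $u_*$ by $m(1-h(u_*))=1$ and use the tail expansion $1-h(u) \sim \sqrt{2/\pi}\,u^{-1}e^{-u^2/2}$ to derive the two-term expansion $u_*^2/2 = \log m - \tfrac12 \log\log m + O(1)$. In the window $u = u_* + w/u_*$ one checks $h(u)^m \to \exp(-e^{-w})$ (Gumbel limit), $u^{\nu-1}\sim u_*^{\nu-1}$ and $e^{-d_e u^2/2} \sim e^{-d_e u_*^2/2}e^{-d_e w}$. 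The $1/u_*$ Jacobian combines with the explicit integral $\int_\mathbb{R}\exp(-e^{-w}) e^{-d_e w}\,dw = \Gamma(d_e)$ (via $t=e^{-w}$) to give $I(m)\sim \Gamma(d_e)\,u_*^{\nu-2}\,e^{-d_e u_*^2/2} = \Theta\bigl((\log m)^{(\nu-2)/2 + d_e/2}\,m^{-d_e}\bigr)$. Since $(\nu-2)/2 + d_e/2 = (d-1)/2$, the announced $\Theta$ asymptotic follows after reinstating the $2^{-m}$ prefactor.

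The delicate step will be the Laplace analysis itself: the exponent $(d-1)/2$ of $\log m$, rather than the naive $(\nu-2)/2 = (d-d_e-1)/2$ one might read off the density $u^{\nu-1}e^{-d_e u^2/2}$, comes entirely from the second-order correction $-\tfrac12\log\log m$ in $u_*^2/2$, which is multiplied by $d_e$ through the Gaussian weight and must be retained carefully. Moreover, because the statement is two-sided ($\Theta$, not just $\Omega$), the Laplace--Gumbel approximation requires matched upper and lower bounds: one must dominate the integrand outside the transition window $[u_* - O(u_*^{-1}), \infty)$ and justify uniform convergence of $h(u)^m$ to $\exp(-e^{-w})$ inside it, both standard but somewhat technical.
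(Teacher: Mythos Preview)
Your reduction to the worst-case integral is essentially identical to the paper's: the paper also writes the bound of \Cref{thm:RP_succ_special_case} as an expectation over a $\chi^2_\nu$ variable (their \Cref{lemma:prob[success]>I}), then minimises each factor over $p_i\in[-1,1]$ by a concavity/symmetry argument (their \Cref{lem: low_bound_JmnDelta2}), picks up the $2^{-m}$, and bounds $\sigma\le 2\sqrt{d_e}$ to arrive at $\tau = 2^{-m}J_{m,n}(\sqrt{d_e})$ with $J_{m,n}(\Delta)$ exactly your $\mathbb{E}_V[h(\sqrt{V/d_e})^m]$ after the substitution $s=\sqrt{V}$. (One small omission: you should also note that when $\sigma=0$ the success probability is $1\ge\tau$, so the bound holds trivially; the paper does this explicitly.)

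Where you diverge is in extracting the asymptotic. The paper makes the substitution $e^{-t}=\erf(s)$, derives a multi-term expansion of $s(t)^2$ near $t=0^+$, and then feeds the resulting integral into Wong's machinery for Laplace-type integrals with logarithmic singularities (their Theorems on $L(\lambda,\mu,z)$ and $G(\lambda,\mu,z)$); this yields a full asymptotic expansion, from which $\Theta$ follows by reading off the leading term. Your Gumbel/Laplace localisation around $u_*$ with $m(1-h(u_*))=1$ is a more direct and arguably cleaner route to the leading order, and your bookkeeping of the $-\tfrac12\log\log m$ correction feeding into $e^{-d_e u_*^2/2}$ is exactly what produces the exponent $(d-1)/2$ rather than $(\nu-2)/2$. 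The trade-off is that the paper's approach gives higher-order terms for free, whereas your approach requires the tail and window estimates you flag at the end to be made rigorous for the two-sided $\Theta$; these are indeed standard but do need to be written out.
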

\begin{proof}
See \Cref{app:proofs_of_corollaries}. 
\end{proof}
    
The next result shows that, in the particular case when $\mvec{p} = \mvec{0}$, the center of the full-dimensional domain $\mathcal{X}$, the probability of success decreases at worst algebraically\footnote{This simplification is due to the fact that when $\mvec{p}=\mvec{0}$, the factor $2^{D-d_e}$ in the denominator of \eqref{eq:asym_exp_general_p} disappears.} with the ambient dimension $D$. 
\begin{theorem}\label{cor:prob[RPXis_succ]=Omega_p=0}
Let \Cref{ass:AREGO_fun_eff_dim} hold with  $\mtx{U} = [\mtx{I}_{d_e} \; \mtx{0}]^T$ and $\mtx{V} = [\mtx{0} \; \mtx{I}_{D-d_e}]^T$. Let $d_e$ and $d$ be fixed, and let
$\mtx{A}$ be a $D \times d$ Gaussian matrix. Let $\mvec{p} = \mvec{0}$. Then 
\begin{equation} \label{eq:prob[success]=>tau_in_corollary0}
    \prob[\text{\eqref{eq: AREGO} is successful}\,] \geq \tau_{\mvec{0}} > 0,
\end{equation}
where
    \begin{equation}\label{eq:asym_exp_p=0}
        \text{$\tau_{\mvec{0}} = \Theta\left(\frac{\log(D-d_e+1)^{\frac{d-1}{2}}}{(D-d_e+1)^{d_e}}\right)$ as $D \rightarrow \infty$,}
    \end{equation}
    and where the constants in $\Theta (\cdot)$ depend only on $d_e$ and $d$.
\end{theorem}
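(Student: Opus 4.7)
The result is a specialization of \Cref{cor:prob[RPXis_succ]=Omega_general_p} to the choice $\mvec{p} = \mvec{0}$, and I would follow the same overall strategy as the proof of that theorem while tracking the simplification afforded by the symmetric case.

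The starting point is \Cref{thm:RP_succ_special_case} evaluated at $\mvec{p} = \mvec{0}$: since then $\mvec{p}_\top = \mvec{0}$ and $\mvec{p}_{d_e+1:D} = \mvec{0}$, inequality \eqref{eq:prob[success]=>prob[-1-p<w<1-p]} reduces to
\[
\prob[\text{\eqref{eq: AREGO} is successful}] \geq \prob[-\mvec{1} \leq \mvec{w} \leq \mvec{1}] = \int_{[-1,1]^{D-d_e}} g(\bar{\mvec{w}})\,d\bar{\mvec{w}},
\]
where $g$ is the multivariate $t$ density in \eqref{eq:pdf_of_w} with $r := \|\mvec{x}_\top^*\|$ for an arbitrary global minimizer $\mvec{x}^* \in G$. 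The geometric content of choosing $\mvec{p} = \mvec{0}$ is that the integration box is centered exactly at the peak of $g$.

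To obtain the asymptotic as $D \to \infty$ with $d$ and $d_e$ fixed, I would use the scale-mixture representation of the multivariate $t$: write $\mvec{w} \stackrel{law}{=} r\mvec{Z}/\sqrt{V}$, with $\mvec{Z} \sim N(\mvec{0}, \mtx{I}_{D-d_e})$ and $V \sim \chi^2_{d-d_e+1}$ independent. Conditioning on $V$ and using the coordinate independence of $\mvec{Z}$ gives
\[
\prob[-\mvec{1} \leq \mvec{w} \leq \mvec{1}] = \expv_V\!\left[\erf\!\left(\sqrt{V}/(r\sqrt{2})\right)^{D-d_e}\right].
\]
I would then lower-bound this expectation by restricting $V$ to a tail $\{V \geq t_*\}$, with $t_*$ of order $\|\mvec{x}_\top^*\|^2\,\log(D-d_e+1)$, chosen so that $\erf(\sqrt{V}/(r\sqrt{2}))^{D-d_e}$ stays bounded below by a positive constant uniformly in $D$. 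The bound $\|\mvec{x}_\top^*\|^2 \leq d_e$, which follows from the coordinate-aligned assumption together with $\mvec{x}^* \in [-1,1]^D$, fixes the denominator exponent to $d_e$, while the standard chi-squared tail estimate $\prob[V \geq t] = \Theta(t^{(d-d_e-1)/2} e^{-t/2})$ as $t \to \infty$ produces the accompanying logarithmic factor.

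The main obstacle is the careful matching of the rate at which $\erf(\sqrt{V}/(r\sqrt{2}))^{D-d_e} \to 1$ with the chi-squared tail decay, so their product lands at precisely the order claimed in \eqref{eq:asym_exp_p=0}. The essential simplification compared with \Cref{cor:prob[RPXis_succ]=Omega_general_p} is geometric: when $\mvec{p} = \mvec{0}$, no coordinate of the box is displaced from the peak of $g$, so there is no per-coordinate halving of $\prob[|w_i| \leq 1\,|\,V]$ through $\Phi(\cdot)$. This is precisely what eliminates the exponential factor $2^{D-d_e}$ present in \eqref{eq:asym_exp_general_p} and yields the algebraic rate \eqref{eq:asym_exp_p=0}.
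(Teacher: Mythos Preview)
Your scale-mixture representation $\mvec{w}\stackrel{law}{=}r\mvec{Z}/\sqrt{V}$ and the resulting expression $\expv_V\bigl[\erf(\sqrt{V}/(r\sqrt{2}))^{m}\bigr]$ (with $m=D-d_e$, $n=d-d_e+1$, $r=\|\mvec{x}_\top^*\|$) are exactly what the paper uses as well: this expectation is the integral $J_{m,n}(r)$ of \Cref{app:proofs_of_corollaries}, and the worst-case replacement $r^2\le d_e$ via monotonicity gives the paper's choice $\tau_{\mvec{0}}=J_{m,n}(\sqrt{d_e})$.

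The gap is in the asymptotic step. A tail cut at $t_*$ of order $2d_e\log(m+1)$, with the $\erf$-power bounded below by a constant on $\{V\ge t_*\}$, gives only
\[
\prob[V\ge t_*]=\Theta\!\left(t_*^{(n-2)/2}e^{-t_*/2}\right)=\Theta\!\left(\frac{(\log(m+1))^{(d-d_e-1)/2}}{(m+1)^{d_e}}\right),
\]
which falls short of the claimed rate in \eqref{eq:asym_exp_p=0} by a factor $(\log(m+1))^{d_e/2}$. So the chi-squared tail by itself does \emph{not} produce the log exponent $(d-1)/2$. The missing power is a second-order effect: the mass of $u\mapsto\erf(u)^m u^{n-1}e^{-d_e u^2}$ concentrates at $u_*^2=\log m-\tfrac12\log\log m+O(1)$, and the $-\tfrac12\log\log m$ correction, fed through $e^{-d_e u_*^2}$, supplies exactly the extra $(\log m)^{d_e/2}$. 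Your threshold at the leading order misses this. Placing the cut instead at $t_*=2d_e(\log m-\tfrac12\log\log m)$ would recover the correct lower-bound order, but you would still owe a matching upper bound on $J_{m,n}(\sqrt{d_e})$ to conclude $\Theta$.

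The paper avoids this by taking $\tau_{\mvec{0}}=J_{m,n}(\sqrt{d_e})$ and computing its full asymptotic expansion: after the substitution $e^{-t}=\erf(s)$, an expansion of $s(t)^2$ for small $t$ reduces the problem to integrals of the form $\int_0^c t^{\lambda-1}(-\log t)^{\mu}e^{-zt}\,dt$, whose large-$z$ behaviour is handled by classical Laplace-type results from \cite{Wong2001}. This is heavier than your tail argument but delivers the precise leading term and the correct log power $(d-1)/2$ simultaneously.
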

\begin{proof}
See \Cref{app:proofs_of_corollaries}. 
\end{proof}

\begin{rem}
    Unlike \Cref{thm: positiveprobasuccess}, the above result does not require \Cref{ass:G^*_is_nondegenerate}. In this specific case, as the effective subspace is aligned with the coordinate axes, \Cref{ass:G^*_is_nondegenerate} is satisfied. The latter follows from $\bar G^* = \{ \mvec{g} \in \mathbb{R}^{D-d_e} : -\mvec{1} \leq \mvec{x}_\top^* + \mtx{V} \mvec{g} \leq \mvec{1}\} = \{ \mvec{g} \in [-1,1]^{D-d_e}\}$, as $\mtx{V} = [\mtx{0} \; \mtx{I}_{D-d_e}]^T$ and the last $D-d_e$ components of the vector $\mvec{x}_\top^*$ are zero; see the proof of \Cref{thm:RP_succ_special_case}. 
\end{rem}
\begin{rem}
The lower bounds on the probability of success of the reduced problem derived here and in the previous section are reasonably tight. We note for example that 
\eqref{eq:prob[success]=>prob[-1-p<w<1-p]} holds with equality if $d=d_e$ and $G = G^*$. Our numerical experiments 
in \Cref{sec: Numerics} also clearly illustrate that the success probability decreases with growing problem dimension $D$. 
\end{rem}

\begin{rem}
  
    Our particular choice of asymptotic framework here is due to its practicality as well as to the ready-at-hand analysis of a similar integral to \eqref{def:I(p,Delta)} in \cite{Wong2001}. The scenario ($d_e$ and $d$ fixed, $D$ large) is a familiar one in practice, where commonly, $d_e$ is small compared to $D$, and $d$ is limited by computational resources available to solve the reduced subproblem. 
    Other asymptotic frameworks that could be considered in the future are $d_e = O(1)$, $d = O(\log(D))$ or $d_e = O(1)$, $d = \beta D$ where $\beta$ is fixed. For more details on how to obtain asymptotic expansions similar to \eqref{eq:asym_exp_general_p} and \eqref{eq:asym_exp_p=0} for such choices of $d_e$ and $d$, refer to \cite{Temme2014, Wong2001}.
\end{rem}

\subsection{Uniformly positive lower bound on the success probability  of \texorpdfstring{\eqref{eq: AREGO}}{(RPX)}  in the general case}

As mentioned in the last paragraph of \Cref{sec: proba_success_pos}, it is difficult to derive a uniformly positive lower bound on the probability of success of \eqref{eq: AREGO} that does not depend on $\mvec{p}$. However, assuming Lipschitz continuity of the objective function, we are able to achieve such a guarantee for  \eqref{eq: AREGO} to be {\it approximately} successful, a weaker notion that is  defined as follows.

\begin{definition} \label{def: eps_successful_AREGO}
	For a(ny) $\epsilon>0$, we say that \eqref{eq: AREGO} is $\epsilon$-\textit{successful} if there exists $\mvec{y}^* \in \mathbb{R}^d$ such that $f(\mtx{A}\mvec{y}^*+\mvec{p}) \leq f^* + \epsilon$ and $\mtx{A}\mvec{y}^* + \mvec{p} \in \mathcal{X}$.
\end{definition}

\noindent Let 
\begin{equation} \label{eq: G_epsilon}
    G_\epsilon := \{ \mvec{x} \in \mathcal{X} : f(\mvec{x}) \leq f^* + \epsilon\}
\end{equation}
be the set of feasible $\epsilon$-minimizers. The reduced problem \eqref{eq: AREGO} is thus $\epsilon$-successful if it contains a feasible $\epsilon$-minimizer.

\begin{assump}\label{ass:f_is_Lipschitz}
   The objective function $f : \mathbb{R}^D \rightarrow \mathbb{R}$ is Lipschitz continuous with Lipschitz constant $L$, that is, $|f(\mvec{x}) - f(\mvec{y})| \leq L\| \mvec{x} - \mvec{y} \|_2$ for all $\mvec{x}$ and $\mvec{y}$ in $\mathbb{R}^D$.
\end{assump}

The next theorem shows that the probability that $\eqref{eq: AREGO}$ is $\epsilon$-successful is uniformly bounded away from zero for all $\mvec{p} \in \mathcal{X}$.

\begin{theorem} \label{thm:prob[I_S=1]>tau} Suppose that \Cref{ass:G^*_is_nondegenerate} and  \Cref{ass:f_is_Lipschitz} hold, and let $\mtx{A}$ be a $D \times d$ Gaussian matrix and $\epsilon>0$, an accuracy tolerance.  Then there exists a constant $\tau_{\epsilon} > 0$ such that, for all $\mvec{p} \in \mathcal{X}$,
\begin{equation} \label{eq:prob[success]=>tau}
    \prob[\eqref{eq: AREGO} \ \text{is $\epsilon$-successful}] \geq \tau_{\epsilon}.
\end{equation}
\end{theorem}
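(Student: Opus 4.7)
The plan is to combine a Lipschitz-based enlargement of the target region with a case split on $\|\mvec{x}_\top^* - \mvec{p}_\top\|$, concluding via a compactness argument applied to the lower bound already established in \Cref{thm: positiveprobasuccess}. Fix a global minimizer $\mvec{x}^* \in G$ whose associated simply-connected component $G^*$ is non-degenerate, as guaranteed by \Cref{ass:G^*_is_nondegenerate}. Combining \Cref{ass:AREGO_fun_eff_dim} with \Cref{ass:f_is_Lipschitz}, for any $\mvec{x} \in \mathcal{X}$ one has
\[
|f(\mvec{x}) - f^*| = |f(\mvec{x}_\top) - f(\mvec{x}_\top^*)| \leq L \, \|\mvec{x}_\top - \mvec{x}_\top^*\|,
\]
so every $\mvec{x} \in \mathcal{X}$ with $\|\mvec{x}_\top - \mvec{x}_\top^*\| \leq \epsilon/L$ automatically belongs to $G_\epsilon$.

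This motivates a split on the distance of $\mvec{p}$ to the effective-space projection of $G^*$. If $\|\mvec{p}_\top - \mvec{x}_\top^*\| \leq \epsilon/L$, then $\mvec{p} \in G_\epsilon$ by the above, and choosing $\mvec{y} = \mvec{0}$ in \eqref{eq: AREGO} yields $\mtx{A}\mvec{y} + \mvec{p} = \mvec{p} \in G_\epsilon \cap \mathcal{X}$, so \eqref{eq: AREGO} is $\epsilon$-successful with probability one. If instead $\|\mvec{p}_\top - \mvec{x}_\top^*\| > \epsilon/L$, then the $t$-distribution of \Cref{thm:pdf_of_w} is non-degenerate, success implies $\epsilon$-success, and the bound developed inside the proof of \Cref{thm: positiveprobasuccess} (see \eqref{eq:in_to_bound}) gives
\[
\prob[\eqref{eq: AREGO} \ \text{is $\epsilon$-successful}] \;\geq\; I(\mvec{p}) \;:=\; \int_{-\mtx{V}^T \mvec{p} + \bar{G}^*} g_{\mvec{p}}(\bar{\mvec{w}})\, d\bar{\mvec{w}} \;>\; 0,
\]
where $g_{\mvec{p}}$ is the density in \eqref{eq:pdf_of_w}, whose dependence on $\mvec{p}$ enters only through the scale $\|\mvec{x}_\top^* - \mvec{p}_\top\|$.

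The core step is to convert this pointwise positivity into a uniform lower bound on $K := \{\mvec{p} \in \mathcal{X} : \|\mvec{p}_\top - \mvec{x}_\top^*\| \geq \epsilon/L\}$, which is closed inside the compact box $\mathcal{X}$ and hence compact. The change of variables $\bar{\mvec{w}}' = \bar{\mvec{w}} + \mtx{V}^T \mvec{p}$ recasts the integral as
\[
I(\mvec{p}) = \int_{\bar{G}^*} g_{\mvec{p}}\!\bigl(\bar{\mvec{w}}' - \mtx{V}^T \mvec{p}\bigr)\, d\bar{\mvec{w}}',
\]
over the fixed and bounded domain $\bar{G}^*$ (bounded because $-\mvec{1} \leq \mvec{x}_\top^* + \mtx{V}\mvec{g} \leq \mvec{1}$ and $\mtx{V}$ has orthonormal columns). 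On $K$ the scale parameter stays in a compact subinterval of $(0,\infty)$, so the integrand is jointly continuous and uniformly bounded on $K \times \bar{G}^*$; dominated convergence then makes $I$ continuous on $K$. Together with pointwise positivity and $\vol(\bar{G}^*) > 0$ from \Cref{ass:G^*_is_nondegenerate}, compactness of $K$ forces $I$ to attain a strictly positive infimum $\tau_\epsilon^B$, and taking $\tau_\epsilon := \min(1, \tau_\epsilon^B) > 0$ finishes the argument.

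The principal obstacle is exactly the degeneracy flagged in the footnote to \Cref{thm: positiveprobasuccess}: as $\mvec{p}_\top \to \mvec{x}_\top^*$ the multivariate $t$-distribution collapses and the pointwise bound $I(\mvec{p})$ cannot be kept away from zero by any Theorem-3.5-style estimate alone. \Cref{ass:f_is_Lipschitz} is precisely what neutralises this regime: inside it $\mvec{p}$ is already an $\epsilon$-minimizer, so $\epsilon$-success is deterministic, leaving the compactness/continuity machinery only the non-degenerate regime to handle, where both the density and the integration domain are well behaved.
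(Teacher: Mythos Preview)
Your proof is correct and shares the paper's overall architecture: the same Lipschitz-based case split on $\|\mvec{p}_\top - \mvec{x}_\top^*\|$ at threshold $\epsilon/L$, with the near case handled deterministically via $\mvec{y}=\mvec{0}$ and the far case reduced to the integral bound \eqref{eq:in_to_bound} from the proof of \Cref{thm: positiveprobasuccess}.

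Where you diverge is in extracting uniformity in the far case. You argue abstractly: $K$ is compact, the scale $\|\mvec{x}_\top^* - \mvec{p}_\top\|$ lies in a compact subset of $(0,\infty)$ on $K$, the density is jointly continuous, $\bar{G}^*$ is bounded, so $I(\mvec{p})$ is continuous and attains its positive infimum. The paper instead bounds the density $g(\bar{\mvec{w}})$ explicitly from below, using $\epsilon/L < \|\mvec{x}_\top^* - \mvec{p}_\top\| \leq 2\sqrt{D}$ and $\|\bar{\mvec{w}}\| \leq 3D$ on the integration region, to obtain the concrete constant
\[
\tau_\epsilon \;=\; C(m,n)\,(2\sqrt{D})^{-m}\bigl(1+9D^2L^2/\epsilon^2\bigr)^{-(m+n)/2}\vol(\bar{G}^*).
\]
Your route is cleaner and avoids any calculation, but is non-constructive; the paper's explicit bound, while cruder, makes the dependence of $\tau_\epsilon$ on $D$, $L$, $\epsilon$ and $\vol(\bar{G}^*)$ visible, which feeds into the later convergence-rate discussion.
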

\begin{proof}
\Cref{ass:G^*_is_nondegenerate} implies that there exists a global minimizer $\mvec{x}^* \in \mathcal{X}$, with corresponding sets $G^*$ (\Cref{def:AREGO_choice_of_x^*}) and $\bar{G}^*$ in \eqref{eq: barGstar}
such that $\vol(\bar{G}^*) > 0$. Let $N_{\eta}(G^*):= \{ \mvec{x} \in \mathcal{X}: \| \mvec{x}_{\top}^* - \mtx{U}\mtx{U}^T \mvec{x}\|_2 \leq \eta \}$ be a neighbourhood of $G^*$ in $\mathcal{X}$, for some $\eta > 0$, where as usual, $\mvec{x}_{\top}^* = \mtx{U} \mtx{U}^T \mvec{x}^*$ is the Euclidean projection of $\mvec{x}^*$ on the effective subspace.

Firstly,  assume that $\mvec{p} \in N_{\epsilon/L}(G^*)$. Then, 
$\| \mvec{x}_{\top}^* - \mvec{p}_{\top} \| \leq \epsilon/L$, and by \Cref{ass:f_is_Lipschitz}, $|f(\mvec{p}) - f^*| = |f(\mvec{p}_{\top}) - f(\mvec{x}_{\top}^*)| \leq L\| \mvec{x}_{\top}^* - \mvec{p}_{\top} \| \leq \epsilon$. Thus $\mvec{p} \in G_{\epsilon}$ and, hence, $\prob[\eqref{eq: AREGO} \ \text{is $\epsilon$-successful}] = 1$. 

Otherwise, $\mvec{p}\in \mathcal{X}\setminus N_{\epsilon/L}(G^*)$. Using the proof of \Cref{thm: positiveprobasuccess}, we have
\begin{equation}\label{eq: epssuccess<success}
    \begin{aligned} 
        \prob[\text{$\eqref{eq: AREGO}$ is $\epsilon$-successful}] \geq \prob[\text{\eqref{eq: AREGO} is successful}] & \geq \int_{-\mtx{V}^T \mvec{p} + \bar{G}^*} g(\mvec{\bar{w}}) d\mvec{\bar{w}},
    \end{aligned}
\end{equation}
where $g(\mvec{\bar{w}})$ is the p.d.f. of $\mvec{w}$ given by \eqref{eq:pdf_of_w}, and where the first inequality is due to the fact that  \eqref{eq: AREGO} being successful implies that \eqref{eq: AREGO} is $\epsilon$-successful (by letting $\epsilon := 0$ in \Cref{def: eps_successful_AREGO}).
To prove \eqref{eq:prob[success]=>tau}, it is thus sufficient to 
lower bound $g(\mvec{\bar{w}})$  by a positive constant, independent of $\mvec{p}$. Since, $\mvec{p} \notin N_{\epsilon/L}(G^*)$, we have 
\begin{equation}\label{eq:norm_x_top-p_top<2sqrt_of_D}
    \frac{\epsilon}{L} < \| \mvec{x}^*_{\top} - \mvec{p}_{\top}\|_2 = \| \mtx{U}\mtx{U}^T(\mvec{x}^* - \mvec{p}) \|_2 \leq \|\mtx{U}\mtx{U}^T\|_2\cdot \|\mvec{x}^* - \mvec{p}\|_2 \leq 2\sqrt{D}, 
\end{equation}
where the last inequality follows from $\|\mtx{U}\mtx{U}^T\|_2 = 1$, since $\mtx{U}$ has orthonormal columns, and from $-\mvec{2} \leq \mvec{x}^* - \mvec{p} \leq \mvec{2}$ since $\mvec{x}^*,\mvec{p} \in [-1,1]^D$. Furthermore, note that, for any $\mvec{\bar{w}} \in -\mtx{V}^T \mvec{p} + \bar{G}^*$, we have
$$ -\mvec{1}-\mvec{x}_{\top}^* - \mvec{p}_{\perp} \leq \mtx{V}\mvec{\bar{w}} \leq \mvec{1} - \mvec{x}_{\top}^* - \mvec{p}_{\perp}, $$
and, hence,
\begin{equation*}
    \begin{aligned}
       \| \mtx{V}\mvec{\bar{w}} \|_{\infty}  & \leq \max(\|-\mvec{1}-\mvec{x}_{\top}^* - \mvec{p}_{\perp}\|_{\infty},\|\mvec{1}-\mvec{x}_{\top}^* - \mvec{p}_{\perp}\|_{\infty}) \\
       & \leq \|\mvec{1}\|_{\infty} + \| \mvec{x}_{\top}^* \|_{\infty} + \|\mvec{p}_{\perp}\|_{\infty} \\
       & \leq 1 + \| \mvec{x}_{\top}^* \|_{2} + \|\mvec{p}_{\perp}\|_{2} \\
       & = 1 + \| \mtx{U} \mtx{U}^T \mvec{x}^* \|_{2} + \|\mtx{V} \mtx{V}^T\mvec{p}\|_{2} \\
       & \leq 1 + \| \mtx{U} \mtx{U}^T \|_2 \cdot \|\mvec{x}^* \|_{2} + \|\mtx{V} \mtx{V}^T\|_2 \cdot \|\mvec{p}\|_{2} \\
       & \leq 1+2\sqrt{D},
    \end{aligned}
\end{equation*}
where the last inequality follows from $\|\mtx{U}\mtx{U}^T\|_2 = 1$ and $\|\mtx{V}\mtx{V}^T\|_2 = 1$ (as $\mtx{U}$ and $\mtx{V}$ are orthonormal) and from $\mvec{x}^*, \mvec{p} \in [-1,1]^D$. Thus,
\begin{equation}\label{eq:norm_w<3D}
    \|\mvec{\bar{w}}\|_2 = \| \mtx{V}\mvec{\bar{w}} \|_2 \leq \sqrt{D}\| \mtx{V}\mvec{\bar{w}} \|_{\infty} \leq \sqrt{D} (1+2\sqrt{D}) \leq 3D.
\end{equation}
By combining \eqref{eq:pdf_of_w},  \eqref{eq:norm_x_top-p_top<2sqrt_of_D} and \eqref{eq:norm_w<3D}, we finally obtain
\begin{equation*}
\begin{aligned}
    \int_{-\mtx{V}^T \mvec{p} + \bar{G}^*} g(\mvec{\bar{w}}) d\mvec{\bar{w}} & = C(m,n) \int_{-\mtx{V}^T \mvec{p} + \bar{G}^*}  \frac{1}{\| \mvec{x}_{\top}^* - \mvec{p}_{\top}\|^m} \left( 1 + \frac{\|\mvec{\bar{w}}\|^2}{\| \mvec{x}_{\top}^* - \mvec{p}_{\top}\|^2} \right)^{-(m+n)/2} d\mvec{\bar{w}} \\
    & > C(m,n) (2\sqrt{D})^{-m} (1+9D^2L^2/\epsilon^2)^{-(m+n)/2}  \int_{-\mtx{V}^T \mvec{p} + \bar{G}^*} d \mvec{\bar{w}} \\
    & = C(m,n) (2\sqrt{D})^{-m} (1+9D^2L^2/\epsilon^2)^{-(m+n)/2} \vol(-\mtx{V}^T \mvec{p} + \bar{G}^*) \\
    & = C(m,n) (2\sqrt{D})^{-m} (1+9D^2L^2/\epsilon^2)^{-(m+n)/2} \vol(\bar{G}^*),
\end{aligned}
\end{equation*}
where $C(m,n) = \Gamma((m+n)/2)/(\pi^{m/2}\Gamma(n/2))$ and where in the last equality we used the fact $\vol(-\mtx{V}^T\mvec{p}+\bar{G}^*) = \vol(\bar{G}^*)$ for any $\mvec{p} \in \mathbb{R}^D$ (invariance of volumes under translations). The result follows from the assumption that $\vol(\bar{G}^*)~>~0$.
\end{proof}



\section{The X-REGO algorithm and its global convergence}
\label{sec: Convergence}

In the case of random embeddings for unconstrained global optimization \cite{Cartis2020},
 the success probability of the reduced problem is independent  of the ambient dimension \cite{Cartis2020}. However, in the constrained case of problem \eqref{eq: GO}, the analysis in \Cref{sec: estim_success} shows that the probability of success of the reduced problem \eqref{eq: AREGO} decreases with $D$. 
 It is thus imperative in any algorithm that uses feasible random embeddings in order to solve \eqref{eq: GO} to allow multiple such subspaces to be explored, and it is practically important to find out what are efficient and theoretically-sound ways to choose these subspaces iteratively. This is the aim of our generic and flexible algorithmic framework, X-REGO (\Cref{alg: AREGO}). Furthermore, as an additional level of generality and practicality, we allow the reduced, random subproblem to be solved stochastically, so that a sufficiently accurate global solution of this problem is only guaranteed with a certain probability. This covers the obvious case when a (convergent) stochastic global optimization algorithm would be employed to solve the reduced subproblem, but also when a deterministic global solver is used but may sometimes fail to find the required solution due to a limited computational budget, processor failure and so on. 
 
In X-REGO, for $k\geq 1$, the $k$th embedding is determined by a realization $\tilde{\mtx{A}}^k = \mtx{A}^k(\mvec{\omega}^k)$ of the random Gaussian matrix $\mtx{A}^k$, and it is drawn at the point $\tilde{\mvec{p}}^{k-1} = \mvec{p}^{k-1}(\mvec{\omega}^{k-1}) \in \mathcal{X}$, a realization of the  random variable $\mvec{p}^{k-1}$ (which, without loss of generality, includes the case of deterministic choices 
by writing $\mvec{p}^{k-1}$ as a random variable with support equal to a singleton). 

\begin{algorithm}[H]
	\caption{$\mathcal{X}$-Random Embeddings for Global Optimization (X-REGO) applied to~\eqref{eq: GO}}
	\label{alg: AREGO}
	\begin{algorithmic}[1]
		\State Initialize $d$ and $\mvec{p}^0 \in \mathcal{X}$
		\For{\text{$k \geq 1$ until termination}} \label{termination}
		\State Draw $\tilde{\mtx{A}}^k$, a realization of the $D\times d$ Gaussian matrix $\mtx{A}$
		\State Calculate $\tilde{\mvec{y}}^k$ by solving approximately and possibly, probabilistically,
		\begin{equation}\label{prob: AREGO_subproblem_re}
		\tag{$\widetilde{\text{RP}\mathcal{X}^k}$}
		\begin{aligned} 
		\tilde{f}^k_{min} = \min_{\mvec{y}\in\mathbb{R}^d} & \; f(\tilde{\mtx{A}}^k \mvec{y} + \tilde{\mvec{p}}^{k-1}) \\
		\text{subject to} & \; \tilde{\mtx{A}}^k \mvec{y} + \tilde{\mvec{p}}^{k-1} \in \mathcal{X} 
		\end{aligned}
		\end{equation}
		\State Let
		\begin{equation} \label{eq: xck}
		\tilde{\mvec{x}}^k := \tilde{\mtx{A}}^k \tilde{\mvec{y}}^k + \tilde{\mvec{p}}^{k-1}
		\end{equation}
		\State Choose (deterministically or randomly) $\tilde{\mvec{p}}^k \in \mathcal{X}$
		\EndFor
	\end{algorithmic}
\end{algorithm}

X-REGO can be seen as a stochastic process, so that in addition to $\tilde{\mvec{p}}^k$ and 
$\tilde{\mtx{A}}^k$, each algorithm realization provides sequences $\tilde{\mvec{x}}^k = \mvec{x}^k(\mvec{\omega}^k)$, $\tilde{\mvec{y}}^k = \mvec{y}^k(\mvec{\omega}^k)$ and $\tilde{f}_{min}^k = f_{min}^k(\mvec{\omega}^k)$, for $k \geq 1$, that are realizations of the random variables $\mvec{x}^k$, $\mvec{y}^k$ and $f_{min}^k$, respectively. Each iteration of X-REGO solves -- approximately and possibly, with a certain probability -- a realization \eqref{prob: AREGO_subproblem_re} of the random problem 
\begin{equation} \label{prob: AREGO_subproblem}
\tag{$\text{RP}\mathcal{X}^k$}
    	\begin{aligned} 
		f^k_{min} = \min_{\mvec{y}} & \; f(\mtx{A}^k \mvec{y} + \mvec{p}^{k-1}) \\
		\text{subject to} & \; \mtx{A}^k \mvec{y} + \mvec{p}^{k-1} \in \mathcal{X}. 
		\end{aligned}
\end{equation}
To calculate $\tilde{\mvec{y}}^k$, \eqref{prob: AREGO_subproblem_re} may be solved to some required accuracy using a deterministic global optimization algorithm that is allowed to fail with a certain probability; or employing a stochastic algorithm, so that 
$\tilde{\mvec{y}}^k$ is only guaranteed to be an approximate global minimizer of  \eqref{prob: AREGO_subproblem_re}  (at least) with a certain probability. 

 Several variants of X-REGO can be obtained by specific choices of the random variable $\mvec{p}^k$ (assumed throughout the paper to have support contained in $\mathcal{X}$). A first possibility consists in simply defining $\mvec{p}^k$ as a random variable with support $\{\mvec{0}\}$, so that $\tilde{\mvec{p}}^{k} = \mvec{0}$ for all $k$. It is also possible to preserve the progress achieved so far by defining $\mvec{p}^k = \mvec{x}_{opt}^k$, where 
 \begin{equation} \label{eq: xoptk}
     \mvec{x}_{opt}^k := \arg \min \{ f(\mvec{x}^1), f(\mvec{x}^2), \dots, f(\mvec{x}^k)\},
 \end{equation}
 the random variable corresponding to the best point found over the $k$ first embeddings. We compare numerically several choices of $\mvec{p}$ on benchmark functions in \Cref{sec: Numerics}. 

The termination in \Cref{termination} could be set to  a given maximum number of embeddings, or could check that no significant progress in decreasing the objective function has been achieved over the last few embeddings, compared to the value $f(\tilde{\mvec{x}}^k_{opt})$. For generality, we leave it unspecified for now.

\subsection{Global convergence of the X-REGO algorithm to the set of global \texorpdfstring{$\epsilon$}{eps}-minimizers}
\label{sec:globalX-REGO}

For a(ny) given tolerance $\epsilon >0$, let $G_\epsilon$ be the set of approximate global minimizers of \eqref{eq: GO} defined in \eqref{eq: G_epsilon}.  We show that $\mvec{x}_{opt}^k$ 
in \eqref{eq: xoptk} converges to $G_{\epsilon}$ almost surely as $k \rightarrow \infty$
(see \Cref{thm: glconv}). 


Intuitively, our proof relies on the fact that any vector $\tilde{\mvec{x}}^k$ defined in \eqref{eq: xck} belongs to $G_\epsilon$ if the following two conditions hold simultaneously: 
(a) the reduced problem \eqref{prob: AREGO_subproblem} is $(\epsilon - \lambda)$-successful in the sense of \Cref{def: eps_successful_AREGO}\footnote{
The reader may expect us to simply require that \eqref{prob: AREGO_subproblem} is $\epsilon$-successful. However, in order to ensure convergence of X-REGO to the set of $\epsilon$-minimizers, we need to be slightly more demanding on the success requirements for \eqref{prob: AREGO_subproblem} so that we  allow inexact solutions (up to accuracy $\lambda$) of the reduced problem \eqref{prob: AREGO_subproblem_re}.}, namely, 
\begin{equation}\label{eq:succ-red}
f_{min}^k \leq f^* + \epsilon-\lambda;
\end{equation}
(b) 
the reduced problem \eqref{prob: AREGO_subproblem_re} is solved (by a deterministic/stochastic algorithm) to an accuracy $\lambda\in (0,\epsilon)$ in the objective function value, namely,
\begin{equation}\label{eq:approxf}
f(\mtx{A}^k \mvec{y}^k + \mvec{p}^{k-1}) \leq  f_{min}^k + \lambda
\end{equation}
holds (at least) with a certain probability.
We introduce  two additional random variables that capture the conditions in (a) and (b) above,
\begin{align}
        R^k  &= \mathds{1}\{\text{\eqref{prob: AREGO_subproblem} is 
        $(\epsilon-\lambda)$-successful in the sense of \eqref{eq:succ-red}}\}, \label{eq: Rk} \\ 
        S^k  &= \mathds{1}\{\text{\eqref{prob: AREGO_subproblem} is solved to accuracy $\lambda$ in the sense of \eqref{eq:approxf}}\}, \label{eq: Sk}
\end{align}
where $\mathds{1}$ is the usual indicator function for an event. 


Let $\mathcal{F}^k = \sigma(\mtx{A}^1, \dots, \mtx{A}^k, \mvec{y}^1, \dots, \mvec{y}^k, \mvec{p}^0, \dots, \mvec{p}^k)$ be the $\sigma$-algebra generated by the random variables $\mtx{A}^1, \dots, \mtx{A}^k, \mvec{y}^1, \dots, \mvec{y}^k, \mvec{p}^0, \dots, \mvec{p}^k$ (a mathematical concept that represents the history of the  X-REGO algorithm as well as its randomness
until the $k$th embedding)\footnote{A similar setup for random iterates of probabilistic models can be found in \cite{Bandeira2014, Cartis2018}.}, with  $\mathcal{F}^0 = \sigma(\mvec{p}^0)$. 
We also construct an `intermediate' $\sigma$-algebra, namely,
$$\mathcal{F}^{k-1/2} = \sigma(\mtx{A}^1, \dots, \mtx{A}^{k-1}, \mtx{A}^{k}, \mvec{y}^1, \dots, \mvec{y}^{k-1}, \mvec{p}^0, \dots, \mvec{p}^{k-1}), $$
with  $\mathcal{F}^{1/2} = \sigma(\mvec{p}^0, \mtx{A}^{1})$.
Note that $\mvec{x}^k$, $R^k$ and $S^k$ are $\mathcal{F}^{k}$-measurable\footnote{It would be possible to restrict the definition of the $\sigma$-algebra $\mathcal{F}^k$
so that it contains strictly the randomness of the embeddings $\mtx{A}^i$ and $\mvec{p}^i$ for $i\leq k$; then we would need to assume that $\mvec{y}^k$  
is $\mathcal{F}^k$-measurable, 
which would imply that $R^k$, $S^k$ and $\mvec{x}^k$ are also $\mathcal{F}^k$-measurable. Similar comments apply to the definition of 
$\mathcal{F}^{k-1/2}$.}, and 
$R^k$ is also $\mathcal{F}^{k-1/2}$-measurable;
thus they are well-defined random variables.




\begin{rem}
	The random variables $\mtx{A}^1, \dots, \mtx{A}^k$, $\mvec{y}^1, \dots, \mvec{y}^k$, $\mvec{x}^1, \dots, \mvec{x}^k$, $\mvec{p}^0, \mvec{p}^1, \dots, \mvec{p}^k$, $R^1$, $\dots$, $R^k$, $S^1, \dots, S^{k}$ are  $\mathcal{F}^{k}$-measurable since $\mathcal{F}^0 \subseteq \mathcal{F}^1 \subseteq \cdots \subseteq \mathcal{F}^{k}$. Also, $\mtx{A}^1, \dots, \mtx{A}^k$, $\mvec{y}^1, \dots, \mvec{y}^{k-1}$, $\mvec{x}^1, \dots, \mvec{x}^{k-1}$, $\mvec{p}^0, \mvec{p}^1, \dots, \mvec{p}^{k-1}$, $R^1$, $\dots$, $R^k$, $S^1, \dots, S^{k-1}$ are  $\mathcal{F}^{k-1/2}$-measurable since $\mathcal{F}^0 \subseteq \mathcal{F}^{1/2} \subseteq \mathcal{F}^1 \subseteq \cdots \subseteq \mathcal{F}^{k-1} \subseteq \mathcal{F}^{k-1/2}$.
	
\end{rem}



A weak assumption is given next, that is satisfied by reasonable techniques for the subproblems; namely, the  reduced problem \eqref{prob: AREGO_subproblem} needs to be solved to required accuracy with some positive probability.

\begin{assump}\label{assump: prob_of_I_R>rho}
	There exists $\rho \in (0,1]$ such that, for all $k \geq 1$,\footnote{The equality in the displayed equation follows from $\mathbb{E}[S^k | \mathcal{F}^{k-1}] = 1 \cdot \prob[ S^k = 1 | \mathcal{F}^{k-1}] + 0 \cdot \prob[ S^k = 0 | \mathcal{F}^{k-1} ]$.}
	$$  \prob[ S^k = 1 | \mathcal{F}^{k-1/2}]=\mathbb{E}[S^k | \mathcal{F}^{k-1/2}]  \geq \rho, $$ 
	i.e., with (conditional) probability at least $\rho > 0$, the solution $\mvec{y}^k$ of \eqref{prob: AREGO_subproblem} 
	satisfies \eqref{eq:approxf}.
\end{assump}

\begin{rem}
 If a deterministic (global optimization) algorithm is used to solve \eqref{prob: AREGO_subproblem_re}, then $S^k$ is  always $\mathcal{F}_k^{k-1/2}$-measurable and \Cref{assump: prob_of_I_R>rho} is equivalent to $S^k\geq \rho$. Since $S^k$ is an indicator function, this further implies that $S^k\equiv 1$, provided a sufficiently large computational budget is available.
\end{rem}


The results of \Cref{sec: estim_success} provide  a lower bound on the (conditional) probability of the reduced problem \eqref{prob: AREGO_subproblem} to be $(\epsilon-\lambda)$-successful, with the consequence given in the first part of the next Corollary. 

\begin{corollary}\label{corr: lowerbdRPK}
	If Assumptions  \ref{ass:G^*_is_nondegenerate} and \ref{ass:f_is_Lipschitz} hold, then 
	\begin{equation}\label{ineq: cond_exp>tau}
\mathbb{E}[R^k | \mathcal{F}^{k-1}]  \geq \tau, \quad {\rm for}\quad k\geq 1.
\end{equation}

If Assumption  \ref{assump: prob_of_I_R>rho} 
holds, then 
\begin{equation}\label{ineq: exp_IS_IR>tau*rho}
\mathbb{E}[R^k S^k | \mathcal{F}^{k-1/2}]  \geq  \rho R^k, \quad {\rm for}\quad k\geq 1.
\end{equation}
\end{corollary}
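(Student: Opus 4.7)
The plan is to treat the two inequalities separately, using the uniform lower bound from \Cref{thm:prob[I_S=1]>tau} for the first and the $\mathcal{F}^{k-1/2}$-measurability of $R^k$ for the second.

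For \eqref{ineq: cond_exp>tau}, I would first observe that $R^k$ is the indicator of an event that depends only on $\mtx{A}^k$ and on the realization of $\mvec{p}^{k-1}$ (through the reduced problem \eqref{prob: AREGO_subproblem}). The random variable $\mvec{p}^{k-1}$ is $\mathcal{F}^{k-1}$-measurable by construction, whereas $\mtx{A}^k$ is a fresh $D\times d$ Gaussian matrix drawn at step $k$, and hence independent of $\mathcal{F}^{k-1}$. Conditioning on $\mathcal{F}^{k-1}$ therefore freezes $\mvec{p}^{k-1}$ to some deterministic value $\tilde{\mvec{p}}^{k-1}\in\mathcal{X}$, while the conditional distribution of $\mtx{A}^k$ remains Gaussian. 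By the standard conditioning identity, $\mathbb{E}[R^k \mid \mathcal{F}^{k-1}]$ equals the (unconditional) probability that \eqref{eq: AREGO} with matrix $\mtx{A}$ and offset $\tilde{\mvec{p}}^{k-1}$ is $(\epsilon-\lambda)$-successful. Applying \Cref{thm:prob[I_S=1]>tau} with accuracy parameter $\epsilon-\lambda>0$ (valid since $\lambda\in(0,\epsilon)$), this probability is bounded below by $\tau:=\tau_{\epsilon-\lambda}>0$ uniformly in $\tilde{\mvec{p}}^{k-1}\in\mathcal{X}$, which proves \eqref{ineq: cond_exp>tau}.

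For \eqref{ineq: exp_IS_IR>tau*rho}, the key observation (already recorded in the text preceding the corollary) is that $R^k$ is $\mathcal{F}^{k-1/2}$-measurable, because both $\mtx{A}^k$ and $\mvec{p}^{k-1}$ are, while $S^k$ is not (it depends on the possibly stochastic solver applied to \eqref{prob: AREGO_subproblem_re}). Using the pull-out property of conditional expectation and then \Cref{assump: prob_of_I_R>rho},
\begin{equation*}
\mathbb{E}[R^k S^k \mid \mathcal{F}^{k-1/2}] \;=\; R^k\,\mathbb{E}[S^k \mid \mathcal{F}^{k-1/2}] \;\geq\; \rho\,R^k,
\end{equation*}
which is the desired bound.

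The only subtle step is the first one, where one must justify that conditioning on $\mathcal{F}^{k-1}$ reduces the computation of $\mathbb{E}[R^k \mid \mathcal{F}^{k-1}]$ to an unconditional probability indexed by a fixed $\mvec{p}\in\mathcal{X}$; the independence of $\mtx{A}^k$ from the past and the $\mathcal{F}^{k-1}$-measurability of $\mvec{p}^{k-1}$ make this routine. The uniformity in $\mvec{p}$ supplied by \Cref{thm:prob[I_S=1]>tau} is exactly what allows the lower bound $\tau$ to be deterministic and independent of $k$.
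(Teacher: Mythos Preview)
Your proof is correct and follows essentially the same approach as the paper's: both parts use the same ingredients---\Cref{thm:prob[I_S=1]>tau} applied with accuracy $\epsilon-\lambda$ and the uniformity in $\mvec{p}$ for \eqref{ineq: cond_exp>tau}, and the pull-out property for the $\mathcal{F}^{k-1/2}$-measurable $R^k$ combined with \Cref{assump: prob_of_I_R>rho} for \eqref{ineq: exp_IS_IR>tau*rho}. You are in fact slightly more explicit than the paper about why the conditioning on $\mathcal{F}^{k-1}$ reduces to a fixed-$\mvec{p}$ computation, which is a welcome clarification.
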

\begin{proof}
 Recall that the support of the random variable $\mvec{p}^k$ is contained in $\mathcal{X}$. For each embedding, we apply \Cref{thm:prob[I_S=1]>tau} (setting $\mvec{p} = \tilde{\mvec{p}}^{k-1}$ and replacing $\epsilon$ by $\epsilon-\lambda$) to deduce that there exists $\tau \in (0,1]$ such that
$\prob[R^k = 1 | \mathcal{F}^{k-1} ] \geq \tau$, for $k\geq 1$.
Then, in terms of conditional expectation, we have
$\mathbb{E}[R^k | \mathcal{F}^{k-1}] = 1 \cdot \prob[ R^k = 1 | \mathcal{F}^{k-1}] + 0 \cdot \prob[ R^k = 0 | \mathcal{F}^{k-1} ] \geq \tau$.

If Assumption  \ref{assump: prob_of_I_R>rho} holds, then
$\mathbb{E}[R^k S^k | \mathcal{F}^{k-1/2}] = R^k  \mathbb{E}[ S^k | \mathcal{F}^{k-1/2}] \geq  \rho R^k$,
where the equality follows from the fact that $R^k$ is $\mathcal{F}^{k-1/2}$-measurable (see \cite[Theorem 4.1.14]{Durrett2019}).
\end{proof}

\subsubsection{Global convergence proof}

A useful property is given next. 
\begin{lemma}\label{lemma: lim_of_prob_is_1}
	Let Assumptions  \ref{ass:G^*_is_nondegenerate}, \ref{ass:f_is_Lipschitz} and \ref{assump: prob_of_I_R>rho}  hold. Then, for $K\geq 1$, we have
	$$ \prob\Big[ \bigcup_{k=1}^K \left\{ \{R^k = 1\} \cap \{ S^k = 1 \} \right\} \Big] \geq 1 - (1-\tau \rho)^K. $$
\end{lemma}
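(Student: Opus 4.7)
The plan is to bound the probability of the complementary event. Let $E_k = \{R^k = 1\} \cap \{S^k = 1\} = \{R^k S^k = 1\}$. Then
\[
\prob\Big[\bigcup_{k=1}^K E_k\Big] = 1 - \prob\Big[\bigcap_{k=1}^K E_k^c\Big] = 1 - \prob\Big[\bigcap_{k=1}^K \{R^k S^k = 0\}\Big],
\]
so it suffices to show the intersection on the right has probability at most $(1-\tau\rho)^K$.

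First I would obtain a uniform lower bound on the one-step conditional success probability. Using the tower property and the two inequalities of \Cref{corr: lowerbdRPK},
\[
\mathbb{E}[R^k S^k \mid \mathcal{F}^{k-1}] = \mathbb{E}\bigl[\mathbb{E}[R^k S^k \mid \mathcal{F}^{k-1/2}] \bigm| \mathcal{F}^{k-1}\bigr] \geq \rho\, \mathbb{E}[R^k \mid \mathcal{F}^{k-1}] \geq \tau\rho,
\]
where the first inequality uses \eqref{ineq: exp_IS_IR>tau*rho} and the fact that $\mathcal{F}^{k-1}\subseteq \mathcal{F}^{k-1/2}$, and the second uses \eqref{ineq: cond_exp>tau}. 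Since $R^k S^k \in \{0,1\}$, this translates to $\prob[R^k S^k = 0 \mid \mathcal{F}^{k-1}] \leq 1 - \tau\rho$ almost surely, for every $k \geq 1$.

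Next, I would iterate this bound. Write
\[
\prob\Big[\bigcap_{k=1}^K \{R^k S^k = 0\}\Big] = \mathbb{E}\Big[\prod_{k=1}^{K-1} \mathds{1}\{R^k S^k = 0\} \cdot \mathds{1}\{R^K S^K = 0\}\Big].
\]
The factor $\prod_{k=1}^{K-1} \mathds{1}\{R^k S^k = 0\}$ is $\mathcal{F}^{K-1}$-measurable, so conditioning on $\mathcal{F}^{K-1}$ and pulling it outside gives
\[
\prob\Big[\bigcap_{k=1}^K \{R^k S^k = 0\}\Big] = \mathbb{E}\Big[\prod_{k=1}^{K-1} \mathds{1}\{R^k S^k = 0\} \cdot \mathbb{E}[\mathds{1}\{R^K S^K = 0\} \mid \mathcal{F}^{K-1}]\Big] \leq (1-\tau\rho)\, \prob\Big[\bigcap_{k=1}^{K-1} \{R^k S^k = 0\}\Big].
\]
A straightforward induction on $K$ then yields $\prob[\bigcap_{k=1}^K \{R^k S^k = 0\}] \leq (1-\tau\rho)^K$, which gives the claim.

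The only mild subtlety is the measurability bookkeeping: one needs $R^k$ to be $\mathcal{F}^{k-1/2}$-measurable (to pull it out of the conditional expectation in the first display) and the partial product up to step $K-1$ to be $\mathcal{F}^{K-1}$-measurable (for the induction step). Both follow from the measurability facts recorded after the construction of the filtration $\{\mathcal{F}^k, \mathcal{F}^{k-1/2}\}$, so no new technical work is required.
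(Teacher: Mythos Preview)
Your proof is correct and follows essentially the same approach as the paper's: both peel off one factor at a time using the tower property together with the two inequalities from \Cref{corr: lowerbdRPK}. The only cosmetic difference is that you first combine those inequalities into the single bound $\mathbb{E}[R^kS^k\mid\mathcal{F}^{k-1}]\geq\tau\rho$ and then run the induction, whereas the paper interleaves the two conditionings (on $\mathcal{F}^{K-1/2}$ and then $\mathcal{F}^{K-1}$) inside each peeling step.
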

\begin{proof}
	We define an auxiliary random variable,
$	 J^K :=  \mathds{1} \left(\bigcup_{k=1}^K \left\{  \{R^k = 1\} \cap \{ S^k = 1 \} \right\} \right).  $
	Note that $J^K = 1- \prod_{k=1}^{K} (1-R^k S^k)$. We have
\begin{align*}
	\prob\Big[ \bigcup_{k=1}^K \left\{ \{R^k = 1\} \cap \{ S^k = 1 \} \right\} \Big] &= \mathbb{E}[J^K]  = 1 - \mathbb{E}\Big[\prod_{k=1}^{K} (1-R^k S^k)\Big] \\
	& \stackrel{(*)}{=} 1 - \mathbb{E}\Big[\mathbb{E}\Big[ \prod_{k=1}^{K} (1-R^k S^k) \Big| \mathcal{F}^{K-1/2} \Big]\Big] \\
	& \stackrel{(\circ)}{=} 1 - \mathbb{E}\Big[ \prod_{k=1}^{K-1} (1-R^k S^k) \cdot \mathbb{E}\big[ 1 - R^K S^K | \mathcal{F}^{K-1/2} \big]\Big] \\
	& \geq 1 - \mathbb{E}\Big[(1-\rho R^K)\cdot \prod_{k=1}^{K-1} (1-R^k S^k)  \Big] \end{align*}
	\begin{align*}
\hspace*{4cm}	& \stackrel{(*)}{=} 1 - \mathbb{E}\Big[\mathbb{E}\Big[(1-\rho R^K)\cdot \prod_{k=1}^{K-1} (1-R^k S^k)   \Big| \mathcal{F}^{K-1}\Big]\Big] \\
	& \stackrel{(\circ)}{=} 1 - \mathbb{E}\Big[ \prod_{k=1}^{K-1} (1-R^k S^k) \cdot \mathbb{E}\big[ 1 - \rho R^K | \mathcal{F}^{K-1} \big]\Big] \\
	& \geq 1 - (1-\tau \rho) \cdot \mathbb{E}\Big[ \prod_{k=1}^{K-1} (1-R^kS^k)\Big],
	\end{align*}
where 
\begin{itemize}
\item[-]
$(*)$ follow from the tower property of conditional expectation (see (4.1.5) in \cite{Durrett2019}), 
\item[-]
$(\circ)$ is due to the fact that $R^1, \dots, R^{K-1}$ and $S^1,\dots,S^{K-1}$ are $\mathcal{F}^{K-1/2}$-  \,and $\mathcal{F}^{K-1}$-measurable (see Theorem 4.1.14 in \cite{Durrett2019}), 
\item[-]
 the inequalities follow from \eqref{ineq: exp_IS_IR>tau*rho} and \eqref{ineq: cond_exp>tau}, respectively. 
\end{itemize}
We repeatedly expand the expectation of the product for $K-1$, $\ldots$, $1$, in exactly the same manner as above, to obtain the desired result.
\end{proof}


In the next lemma, we show that if 
\eqref{prob: AREGO_subproblem} is $(\epsilon-\lambda)$-successful and  is solved to accuracy $\lambda$ in objective value, then the solution $\mvec{x}^k$ must be inside $G_{\epsilon}$; thus proving our intuitive 
statements (a) and (b) at the start of Section \ref{sec:globalX-REGO}.

\begin{lemma}\label{lemma: if WcapG then_x in G_epsilon}
		Suppose Assumptions \ref{ass:G^*_is_nondegenerate}, \ref{ass:f_is_Lipschitz} and  \ref{assump: prob_of_I_R>rho}  hold. Then
		$$
		\{R^k = 1\}\cap \{S^k = 1\} \subseteq \{\mvec{x}^{k}  \in G_{\epsilon}\}.
		$$
\end{lemma}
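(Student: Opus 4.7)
The plan is to prove the containment of events by a direct deterministic argument: fix any outcome $\omega$ in the event $\{R^k = 1\} \cap \{S^k = 1\}$ on the left-hand side, and show that the corresponding realization of $\mvec{x}^k$ lies in $G_\epsilon$. Since $G_\epsilon = \{\mvec{x} \in \mathcal{X} : f(\mvec{x}) \leq f^* + \epsilon\}$, this requires two things: (i) $\mvec{x}^k(\omega) \in \mathcal{X}$ and (ii) $f(\mvec{x}^k(\omega)) \leq f^* + \epsilon$.

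For (i), I would observe that $\mvec{x}^k = \mtx{A}^k \mvec{y}^k + \mvec{p}^{k-1}$ by \eqref{eq: xck}, and the feasibility constraint of \eqref{prob: AREGO_subproblem_re} forces $\mtx{A}^k \mvec{y}^k + \mvec{p}^{k-1} \in \mathcal{X}$; this is built into the algorithm regardless of whether $S^k$ or $R^k$ equal one, so $\mvec{x}^k \in \mathcal{X}$ always holds.

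For (ii), the proof is just a chaining of the two inequalities encoded in $R^k = 1$ and $S^k = 1$. Specifically, by definition \eqref{eq: Sk} of $S^k$ and the accuracy condition \eqref{eq:approxf}, $S^k = 1$ implies
\begin{equation*}
f(\mvec{x}^k) = f(\mtx{A}^k \mvec{y}^k + \mvec{p}^{k-1}) \leq f_{min}^k + \lambda,
\end{equation*}
while by definition \eqref{eq: Rk} of $R^k$ and the success condition \eqref{eq:succ-red}, $R^k = 1$ implies
\begin{equation*}
f_{min}^k \leq f^* + \epsilon - \lambda.
\end{equation*}
Adding these gives $f(\mvec{x}^k) \leq f^* + \epsilon$, which combined with (i) yields $\mvec{x}^k \in G_\epsilon$, completing the argument.

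There is no real obstacle here; the lemma is essentially a bookkeeping statement that the particular offsets in the definitions of $R^k$ (shifted by $-\lambda$) and $S^k$ (allowing slack $+\lambda$) were chosen precisely so that the two slacks cancel and reconstruct the target $\epsilon$-accuracy. The only mildly delicate point is justifying (i), which relies on the convention that the solver for \eqref{prob: AREGO_subproblem_re} returns a feasible $\mvec{y}^k$ regardless of whether the global-optimality requirement is met; this should be stated explicitly so the reader sees why feasibility is not contingent on $S^k = 1$.
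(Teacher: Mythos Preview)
Your proof is correct and follows essentially the same approach as the paper: chain the inequality from $S^k=1$ (namely \eqref{eq:approxf}) with the inequality from $R^k=1$ (namely \eqref{eq:succ-red}) to obtain $f(\mvec{x}^k)\le f^*+\epsilon$. The paper's version differs only cosmetically, unpacking $R^k=1$ via \Cref{def: eps_successful_AREGO} (the existence of a feasible $\mvec{y}^k_{int}$) and then observing $f^k_{min}\le f(\mtx{A}^k\mvec{y}^k_{int}+\mvec{p}^{k-1})$ to recover \eqref{eq:succ-red}, whereas you invoke \eqref{eq:succ-red} directly; your explicit treatment of feasibility $\mvec{x}^k\in\mathcal{X}$ is a sensible addition the paper leaves implicit.
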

\begin{proof}
	By \Cref{def: eps_successful_AREGO}, if \eqref{prob: AREGO_subproblem} is $(\epsilon-\lambda)$-successful, then there exists $\mvec{y}^k_{int} \in \mathbb{R}^d$ such that $\mtx{A}^k \mvec{y}^k_{int} + \mvec{p}^{k-1} \in \mathcal{X}$ and 
	\begin{equation} \label{ineq: asym_conv_ineq1}
	f(\mtx{A}^k\mvec{y}^k_{int} + \mvec{p}^{k-1}) \leq f^* + \epsilon - \lambda.
	\end{equation}
	Since $\mvec{y}^k_{int}$ is in the feasible set of \eqref{prob: AREGO_subproblem} and $ f^k_{min}$ is the global minimum of \eqref{prob: AREGO_subproblem}, we have
	\begin{equation} \label{ineq: asym_conv_ineq3}
	 f^k_{min} \leq f(\mtx{A}^k\mvec{y}^k_{int} + \mvec{p}^{k-1}).
	\end{equation}
	Then, for $\mvec{x}^k$, \eqref{eq:approxf} gives the first inequality below,
	$$ f(\mvec{x}^k) \leq f^k_{min} + \lambda \leq f(\mtx{A}^k\mvec{y}^k_{int} + \mvec{p}^{k-1}) + \lambda \leq f^* + \epsilon, $$
	where the second and third inequalities follow from \eqref{ineq: asym_conv_ineq3} and \eqref{ineq: asym_conv_ineq1}, respectively. This shows that $\mvec{x}^k \in G_\epsilon$. 
\end{proof}

\begin{theorem}[Global convergence]\label{thm: glconv}
	Suppose Assumptions \ref{ass:G^*_is_nondegenerate}, \ref{ass:f_is_Lipschitz} and \ref{assump: prob_of_I_R>rho}  hold. Then
	$$\lim_{k\rightarrow \infty} \prob[\mvec{x}^k_{opt} \in G_{\epsilon}]=\lim_{k\rightarrow \infty} \prob[f(\mvec{x}^k_{opt}) \leq f^* + \epsilon] = 1$$
	where $\mvec{x}^k_{opt}$ and $G_{\epsilon}$ are defined in \eqref{eq: xoptk} and   \eqref{eq: G_epsilon}, respectively.
	
	Furthermore, for any $\xi \in (0,1)$, 
	\begin{equation}\label{eq:prob[x_opt^k_in_G_eps]>alpha}
\text{$\prob[ \mvec{x}^k_{opt} \in G_{\epsilon} ]= \prob[f(\mvec{x}^k_{opt}) \leq f^* + \epsilon]\geq \xi$ for all $k \geq K_{\xi}$,}
\end{equation}
where $K_\xi:= \displaystyle\ceil*{\frac{|\log(1-\xi)|}{\tau \rho}}$.
\end{theorem}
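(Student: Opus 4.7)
The plan is to assemble the result directly from the two preceding lemmas (\Cref{lemma: lim_of_prob_is_1} and \Cref{lemma: if WcapG then_x in G_epsilon}), with a short monotonicity argument to pass from $\mvec{x}^k$ to $\mvec{x}^k_{opt}$. First I would observe that, by the definition of $\mvec{x}^k_{opt}$ in \eqref{eq: xoptk} and the characterization of $G_\epsilon$ in \eqref{eq: G_epsilon}, if any of the iterates $\mvec{x}^1,\ldots,\mvec{x}^K$ lies in $G_\epsilon$, then so does $\mvec{x}^K_{opt}$, since $f(\mvec{x}^K_{opt})\leq f(\mvec{x}^k)\leq f^*+\epsilon$ for that index $k$, and feasibility of $\mvec{x}^K_{opt}$ is preserved by construction. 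Hence
$$\prob[\mvec{x}^K_{opt}\in G_\epsilon] \geq \prob\Big[\bigcup_{k=1}^K \{\mvec{x}^k\in G_\epsilon\}\Big].$$

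Next, I would invoke \Cref{lemma: if WcapG then_x in G_epsilon} to conclude that, for each $k$, $\{R^k=1\}\cap\{S^k=1\}\subseteq\{\mvec{x}^k\in G_\epsilon\}$, and then apply \Cref{lemma: lim_of_prob_is_1} to bound the union, which yields
$$\prob[\mvec{x}^K_{opt}\in G_\epsilon] \geq \prob\Big[\bigcup_{k=1}^K \{R^k=1\}\cap\{S^k=1\}\Big] \geq 1-(1-\tau\rho)^K.$$
Since $\tau>0$ by \Cref{thm:prob[I_S=1]>tau} (whose hypotheses follow from \Cref{ass:G^*_is_nondegenerate} and \Cref{ass:f_is_Lipschitz}), and $\rho>0$ by \Cref{assump: prob_of_I_R>rho}, we have $\tau\rho\in(0,1]$, so $(1-\tau\rho)^K\to 0$ as $K\to\infty$. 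This gives the first claim of the theorem, and the equality $\prob[\mvec{x}^K_{opt}\in G_\epsilon]=\prob[f(\mvec{x}^K_{opt})\leq f^*+\epsilon]$ follows immediately from the definition of $G_\epsilon$ and feasibility of $\mvec{x}^K_{opt}$.

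For the quantitative rate, I would impose $1-(1-\tau\rho)^K\geq\xi$, which is equivalent to $K|\log(1-\tau\rho)|\geq|\log(1-\xi)|$. Using the elementary inequality $\log(1-x)\leq -x$ for $x\in(0,1)$, so that $|\log(1-\tau\rho)|\geq\tau\rho$, this condition is guaranteed by $K\geq |\log(1-\xi)|/(\tau\rho)$, and therefore by $K\geq K_\xi=\ceil{|\log(1-\xi)|/(\tau\rho)}$, completing the proof.

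Since the heavy lifting has already been done in \Cref{lemma: lim_of_prob_is_1} (where the tower property is used repeatedly to telescope across the filtrations $\mathcal{F}^{k-1}$ and $\mathcal{F}^{k-1/2}$), no genuine obstacle remains; the only subtlety is to recognize that the $\sigma$-algebra setup ensures measurability of the events in question and that the monotone relation $f(\mvec{x}^K_{opt})\leq f(\mvec{x}^k)$ allows the union bound on individual iterates to be transferred to the running best.
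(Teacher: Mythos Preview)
Your proposal is correct and follows essentially the same approach as the paper: you combine \Cref{lemma: if WcapG then_x in G_epsilon} and \Cref{lemma: lim_of_prob_is_1} with a monotonicity argument for $\mvec{x}^K_{opt}$, and you make explicit the inequality $|\log(1-\tau\rho)|\geq\tau\rho$ that the paper uses implicitly when asserting $K_\xi\geq \log(1-\xi)/\log(1-\tau\rho)$.
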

\begin{proof}
Lemma \ref{lemma: if WcapG then_x in G_epsilon} and the definition of $\mvec{x}^k_{opt}$ in \eqref{eq: xoptk} provide 
	$$ \{ R^k = 1 \} \cap \{ S^k = 1 \} \subseteq \{ \mvec{x}^k \in G_{\epsilon} \} \subseteq \{ \mvec{x}_{opt}^k \in G_{\epsilon} \} $$
	for $k = 1, 2,\dots, K$ and for any integer $K\geq 1$. Hence, 
	\begin{equation}\label{rel: cup_I is in cup_X}
	\bigcup_{k=1}^K \{ R^k = 1 \} \cap \{ S^k = 1 \} \subseteq \bigcup_{k=1}^K \{ \mvec{x}^k_{opt} \in G_{\epsilon} \}.
	\end{equation}
	Note that  the sequence $\{ f(\mvec{x}^1_{opt}), f(\mvec{x}^2_{opt}), \dots, f(\mvec{x}^K_{opt})\}$ is monotonically decreasing. Therefore, if $\mvec{x}^k_{opt} \in G_{\epsilon}$ for some $k \leq K$ then $\mvec{x}^i_{opt} \in G_{\epsilon}$ for all $i = k, \dots, K$; and so the sequence $(\{ \mvec{x}^k_{opt} \in G_{\epsilon} \})_{k = 1}^K$ is an increasing sequence of events. Hence,
	\begin{equation}\label{eq:cup_x_opt_in_G=x_opt_in_G}
	    \bigcup_{k=1}^K \{ \mvec{x}^k_{opt} \in G_{\epsilon} \} = \{ \mvec{x}^K_{opt} \in G_{\epsilon} \}.
	\end{equation}
	From \eqref{eq:cup_x_opt_in_G=x_opt_in_G} and \eqref{rel: cup_I is in cup_X}, we have for all $K\geq 1$,
	\begin{equation}\label{eq:prob[x_opt_in_G_eps>1-(1-tr)^K]}
	\prob[\{ \mvec{x}^K_{opt} \in G_{\epsilon} \}]  \geq \prob\Big[\bigcup_{k=1}^K \{ R^k = 1 \} \cap \{ S^k = 1 \} \Big]  \geq 1 - (1- \tau \rho)^K,
	\end{equation}
		where the second inequality follows from \Cref{lemma: lim_of_prob_is_1}.
Finally, passing to the limit with $K$ in \eqref{eq:prob[x_opt_in_G_eps>1-(1-tr)^K]}, we deduce
$	1 \geq \lim_{K \rightarrow \infty} \prob[\{ \mvec{x}^K_{opt} \in G_{\epsilon} \}] \geq \lim_{K \rightarrow \infty} \left[1 - (1- \tau \rho)^K\right] = 1$, as required.

 Note that if 
    \begin{equation} \label{eq:1-(1-tau rho)^K>alpha}
        1-(1-\tau \rho)^k \geq \xi
    \end{equation}
    then \eqref{eq:prob[x_opt_in_G_eps>1-(1-tr)^K]} implies $\prob[ \mvec{x}^k_{opt} \in G_{\epsilon} ] \geq \xi$. Since \eqref{eq:1-(1-tau rho)^K>alpha} is equivalent to
    $ k \geq \displaystyle\frac{\log(1-\xi)}{\log(1-\tau \rho)}$, 
     \eqref{eq:1-(1-tau rho)^K>alpha} holds for all $k\geq K_\xi$ since 
    $K_\xi \geq \displaystyle\frac{\log(1-\xi)}{\log(1-\tau \rho)}$.
\end{proof}

\begin{rem}
\label{rem:generalXREGO}
 Crucially, we note that X-REGO (\Cref{alg: AREGO}) is a generic framework that can be applied to a general, continuous objective $f$ in (P). Furthermore, the convergence result in \Cref{thm: glconv} also continues to hold in this general case provided \eqref{ineq: cond_exp>tau} can be shown to hold; this is  where we crucially use the special structure of low effective dimensionality of the objective that we investigate in this paper.
\end{rem}

\begin{rem}
If $f$ is a convex function (and known a priori to be so), then clearly, a local (deterministic or stochastic) optimization algorithm may be used to solve \eqref{prob: AREGO_subproblem_re}
and achieve \eqref{eq:approxf}. Apart from this important speed-up and simplification, it is difficult to exploit this additional special structure of $f$ in our analysis, in order to improve the success bounds and convergence.
\end{rem}

\paragraph*{Quantifiable rates of convergence when the effective subspace is aligned with coordinate axes}
Using the estimates for $\tau$ in \Cref{cor:prob[RPXis_succ]=Omega_general_p}, we can estimate precisely the rate of convergence of X-REGO as a function of problem dimension, assuming that $\mathcal{T}$ is aligned with coordinate axes.

\begin{theorem} 
Suppose \Cref{ass:AREGO_fun_eff_dim} holds with $\mtx{U} = [\mtx{I}_{d_e} \; \mtx{0}]^T$ and $\mtx{V} = [\mtx{0} \; \mtx{I}_{D-d_e}]^T$, as well as  Assumption \ref{assump: prob_of_I_R>rho}. Let $\xi \in (0,1)$, and $d_e$ and $d$ be fixed. Then \eqref{eq:prob[x_opt^k_in_G_eps]>alpha}
holds with 
    \begin{equation} \label{eq:K_bound_any_choice_of_p}
        \text{$K_\xi= \frac{\left|\log(1-\xi)\right|}{\rho} O\left(\frac{2^{D-d_e} \cdot(D-d_e+1)^{d_e}}{\log(D-d_e+1)^{\frac{d-1}{2}}}\right)$ as $D \rightarrow \infty$.}
    \end{equation}
If $\mvec{p}^k = \mvec{0}$ for $k\geq 0$, then \eqref{eq:prob[x_opt^k_in_G_eps]>alpha} holds  with
    \begin{equation}\label{eq:K_bound_p=0}
        \text{$K_\xi = \frac{\left|\log(1-\xi)\right|}{\rho} O\left(\frac{ (D-d_e+1)^{d_e}}{\log(D-d_e+1)^{\frac{d-1}{2}}}\right)$ as $D \rightarrow \infty$.}
    \end{equation}
\end{theorem}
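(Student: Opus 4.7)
The plan is to reduce the claim to Theorem \ref{thm: glconv}, which already gives $K_\xi = \lceil |\log(1-\xi)|/(\tau \rho) \rceil$, and then to substitute the sharper, dimension-explicit lower bounds on $\tau$ provided by Corollaries \ref{cor:prob[RPXis_succ]=Omega_general_p} and \ref{cor:prob[RPXis_succ]=Omega_p=0}. The crucial observation is that in the coordinate-aligned setting these corollaries do not require Lipschitz continuity of $f$, and as noted in the remark following Corollary \ref{cor:prob[RPXis_succ]=Omega_p=0}, Assumption \ref{ass:G^*_is_nondegenerate} is automatically satisfied; the theorem can therefore be stated without invoking \Cref{ass:f_is_Lipschitz} or \Cref{ass:G^*_is_nondegenerate}.

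The first step is to inspect the proof of Theorem \ref{thm: glconv} and isolate what it actually uses. Lemma \ref{lemma: if WcapG then_x in G_epsilon} depends only on Definition \ref{def: eps_successful_AREGO} and on the inequality $f(\mvec{x}^k)\leq f_{min}^k+\lambda$, so no Lipschitz condition is invoked. Lemma \ref{lemma: lim_of_prob_is_1} uses Corollary \ref{corr: lowerbdRPK} solely to extract the inequality $\mathbb{E}[R^k | \mathcal{F}^{k-1}] \geq \tau$, with $\tau>0$. Hence the entire chain supporting Theorem \ref{thm: glconv} goes through whenever one can guarantee such a lower bound, regardless of how it is obtained. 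I would therefore explicitly re-derive this bound directly from the aligned-case results.

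Next, since $0<\lambda<\epsilon$, an exactly successful \eqref{prob: AREGO_subproblem} in the sense of \Cref{def: successful_AREGO} is automatically $(\epsilon-\lambda)$-successful in the sense of \Cref{def: eps_successful_AREGO}. Conditioning on $\mathcal{F}^{k-1}$ and applying Corollary \ref{cor:prob[RPXis_succ]=Omega_general_p} with $\mvec{p}=\tilde{\mvec{p}}^{k-1}$ gives
\begin{equation*}
\mathbb{E}[R^k \mid \mathcal{F}^{k-1}] \;\geq\; \prob[\,\text{\eqref{prob: AREGO_subproblem} is successful} \mid \mathcal{F}^{k-1}\,] \;\geq\; \tau,
\end{equation*}
with $\tau = \Theta\!\left(\log(D-d_e+1)^{(d-1)/2}/(2^{D-d_e}(D-d_e+1)^{d_e})\right)$ as $D\to\infty$. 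If instead $\tilde{\mvec{p}}^k=\mvec{0}$ for all $k$, then Corollary \ref{cor:prob[RPXis_succ]=Omega_p=0} yields the sharper $\tau_{\mvec{0}} = \Theta\!\left(\log(D-d_e+1)^{(d-1)/2}/(D-d_e+1)^{d_e}\right)$. Both bounds are uniform in the history $\mathcal{F}^{k-1}$ because they are uniform in $\mvec{p}\in\mathcal{X}$.

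Finally, I would substitute these expressions into $K_\xi = \lceil |\log(1-\xi)|/(\tau\rho)\rceil$. Inverting the $\Theta$ asymptotics yields $1/\tau=\Theta(2^{D-d_e}(D-d_e+1)^{d_e}/\log(D-d_e+1)^{(d-1)/2})$ and $1/\tau_{\mvec{0}}=\Theta((D-d_e+1)^{d_e}/\log(D-d_e+1)^{(d-1)/2})$, and multiplying by $|\log(1-\xi)|/\rho$ produces exactly \eqref{eq:K_bound_any_choice_of_p} and \eqref{eq:K_bound_p=0} respectively. The only real subtlety, which I regard as the main (but mild) obstacle, is the book-keeping in step one: confirming that nothing in the proof of Theorem \ref{thm: glconv} actually requires \Cref{ass:f_is_Lipschitz} beyond producing the lower bound $\tau$, so that the aligned-case route supplies that bound through a different, Lipschitz-free mechanism, and the statement of the present theorem is internally consistent.
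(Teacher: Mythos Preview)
Your proposal is correct and follows essentially the same route as the paper: the paper's proof simply says that the result follows from \Cref{thm: glconv} together with the asymptotic bounds \eqref{eq:asym_exp_general_p} and \eqref{eq:asym_exp_p=0}, and defers the assumption-bookkeeping to the remark immediately after, which is precisely the content of your first and second steps. Your write-up is more explicit than the paper's about why the Lipschitz assumption can be dropped (successful $\Rightarrow$ $(\epsilon-\lambda)$-successful, so \Cref{cor:prob[RPXis_succ]=Omega_general_p} supplies $\tau$ directly), but the logic is identical.
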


\begin{proof}
Firstly, note our remark regarding assumptions below. 
    The result follows from \Cref{thm: glconv}, \eqref{eq:asym_exp_general_p} and \eqref{eq:asym_exp_p=0}. 
\end{proof}

\begin{rem}
 Assumptions \ref{ass:G^*_is_nondegenerate} and \ref{ass:f_is_Lipschitz} were required to prove \Cref{thm:prob[I_S=1]>tau} and, consequently, \eqref{ineq: cond_exp>tau}. If the effective subspace is aligned with coordinate axes, we no longer need Assumptions \ref{ass:f_is_Lipschitz} and \ref{ass:G^*_is_nondegenerate} to prove \eqref{ineq: cond_exp>tau}. In this case, \eqref{ineq: cond_exp>tau} follows from \Cref{cor:prob[RPXis_succ]=Omega_general_p}, together with the fact that \eqref{prob: AREGO_subproblem} being successful implies \eqref{prob: AREGO_subproblem} is $\epsilon$-successful for any $\epsilon \geq 0$. 
\end{rem}



\section{Numerical experiments}
\label{sec: Numerics}
\subsection{Setup}
\paragraph{Algorithms.}  We test different variants of \Cref{alg: AREGO}  against the \textit{no-embedding} framework, in which \eqref{eq: GO} is solved directly without using random embeddings and with no explicit exploitation of its special structure. Each variant of X-REGO corresponds to a specific choice of  $\mvec{p}^k$, $k \geq 0$:

\begin{itemize}
    \item[-] Adaptive X-REGO (A-REGO). In X-REGO, the point $\mvec{p}^k$ is chosen as the best point found up to the $k$th embedding: if $f(\mtx{A}^k \mvec{y}^k+\mvec{p}^{k-1}) < f(\mvec{p}^{k-1})$ then  $\mvec{p}^{k} := \mtx{A}^k \mvec{y}^k+\mvec{p}^{k-1}$, otherwise, $\mvec{p}^k := \mvec{p}^{k-1}$.
    \item[-] Local Adaptive X-REGO (LA-REGO). In X-REGO, we solve \eqref{prob: AREGO_subproblem_re} using a local solver (instead of a global one as in N-REGO). Then, if $|f(\mtx{A}^k \mvec{y}^k+\mvec{p}^{k-1}) - f(\mvec{p}^{k-1})| > \gamma$ for some small $\gamma$ (here, $\gamma = 10^{-5}$), we let $\mvec{p}^{k} := \mtx{A}^k \mvec{y}^k+\mvec{p}^{k-1}$, otherwise, $\mvec{p}^k$ is chosen uniformly at random in $\mathcal{X}$. 
    \item[-] Nonadaptive X-REGO (N-REGO). In X-REGO, all the random subspaces are drawn at the origin:  $\mvec{p}^k := \mvec{0}$ for all $k$. 
    \item[-] Local Nonadaptive X-REGO (LN-REGO). In X-REGO, the low-dimensional problem \eqref{prob: AREGO_subproblem_re} is solved using a local solver, and the point $\mvec{p}^k$ is chosen uniformly at random in $\mathcal{X}$ for all $k$. 
\end{itemize}


\paragraph{Solvers.} We test the aforementioned X-REGO variants using three solvers for solving the reduced problem \eqref{prob: AREGO_subproblem_re} (or the original problem \eqref{eq: GO} in the no-embedding case), namely, DIRECT (\cite{Finkel2003,Gablonsky2001,Jones1993}), BARON (\cite{Sahinidis2014,Sahinidis2005}) and KNITRO (\cite{Byrd2006}).

DIRECT(\cite{Gablonsky2001, Jones1993, Finkel2003}) version 4.0 (DIviding RECTangles) is a deterministic\footnote{Here, we refer to the predictable behaviour of the solver given a fixed set of parameters.} global optimization solver, that does not require information about the gradient nor about the Lipschitz constant.

BARON(\cite{Sahinidis2014, Sahinidis2005}) version 17.10.10 (Branch-And-Reduce Optimization Navigator) is a state-of-the-art branch- and-bound type global optimization solver for nonlinear and mixed-integer programs, that is highly competitive \cite{Neumaier2005}. However, it accepts only a few (general) classes of functions (e.g., no trigonometric functions, no black box functions).

KNITRO(\cite{Byrd2006}) version 10.3.0 is a large-scale nonlinear local optimization solver that makes use of objective derivatives.  KNITRO has a multi-start feature, referred here as mKNITRO, allowing it to aim for global minimizers. 

We refer to \cite{Cartis2020} for a detailed description of the solvers. We test A-REGO and N-REGO using DIRECT, BARON and mKNITRO  and test LA-REGO and LN-REGO using only local KNITRO, with no multi-start.

\paragraph{Test set.} The methodology of these constructions is given in \cite{Cartis2020, Wang2016} and summarized here in Appendix \ref{app: Test set}. Our synthetic test set contains 19  $D$-dimensional functions with low effective dimension, with $D = 10,100$ and $1000$. We construct these high-dimensional functions from 19 global optimization problems (\Cref{table: Test set}, of dimensions 2--6) with known global minima \cite{Ernesto2005, AMPGO, SFAwebsite}, some of which are in the Dixon-Szego test set \cite{Dixon-Szego1975}. The construction process consists in artificially adding  coordinates to the original functions, and then applying a rotation to ensure that the effective subspace is not aligned with the coordinate axes.

\paragraph{Experimental setup.} 
For each version of X-REGO and its paired solvers, we solve the entire test set 5 times to estimate the average performance of the algorithms. Let $f$ be a function from the test set with the global minimum $f^*$. When applying any version of X-REGO to minimize $f$, we terminate either after $K = 100$ embeddings, or earlier, as soon as\footnote{We acknowledge that the use of the true global minimum $f^*$, or a sufficiently close lower bound, in our numerical testing is not practical. But we note that our aim here is to test both  `no-embedding' and X-REGO in similar, even if idealized, settings.}
\begin{equation} \label{eq: stopCrit}
    f(\wt{\mtx{A}}^k \tilde{\mvec{y}}^k + \tilde{\mvec{p}}^{k-1})-f^* \leq \epsilon = 10^{-3}.
\end{equation}
We then record the computational cost, which we measure in terms of either function evaluations or CPU time in seconds. To compare with `no-embedding', we solve the full-dimensional problem \eqref{eq: GO} directly with DIRECT, BARON and mKNITRO with no use of random embeddings. The budget and termination criteria used for each solver to solve \eqref{prob: AREGO_subproblem_re} within X-REGO or to solve \eqref{eq: GO} in the `no-embedding' framework are outlined in \Cref{table: solvers_descriptions}.
\begin{rem}
	The experiments are done not to compare solvers but to contrast `no-embedding' with the X-REGO variants. All the experiments were run in MATLAB on the 16 cores (2$\times$8 Intel with hyper-threading) Linux machines with 256GB RAM and 3300 MHz speed. 
\end{rem}

We compare the results using performance profiles (Dolan and Mor\'{e}, \cite{Dolan2002}), which measure the proportion of problems solved by the algorithm in less than a given budget defined based on the best performance among the algorithms considered. More precisely, for each solver (BARON, DIRECT and KNITRO), and for each algorithm $\mathcal{A}$ (the above-mentioned variants of X-REGO and `no-embedding'), we record $\mathcal{N}_p(\mathcal{A})$, the computational cost (see \Cref{table: solvers_descriptions}) of running algorithm $\mathcal{A}$ to solve problem $p$ within accuracy $\epsilon$. Let $\mathcal{N}^*_p$ be the minimum computational cost required for problem $p$ by any algorithm $\mathcal{A}$. The performance (probability) of algorithm $\mathcal{A}$ on the problem set $\mathcal{P}$ is defined as 
\begin{equation*} \label{eq: performanceProfile}
    \pi_{\mathcal{A}}(\alpha) = \frac{| \{p \in \mathcal{P} : \mathcal{N}_p(\mathcal{A}) \leq \alpha \mathcal{N}^*_p\} |}{|\mathcal{P}|},
\end{equation*}
with performance ratio $\alpha \geq 1$. As each experiment involving random embeddings is repeated five times, we obtain five curves for the corresponding algorithm-solver pairs.

\begin{table}[!t]
	\centering
	\footnotesize
	\caption{The table outlines the experimental setup for the solvers, used both in the `no-embedding' algorithm and for solving the low-dimensional problem \eqref{prob: AREGO_subproblem_re} (as usually, $f$ denotes the $D$-dimensional function to minimize, $f^*$ is its global minimum, and  
	$\epsilon$ in \eqref{eq: stopCrit} is set to $10^{-3}$). At each internal iteration, DIRECT stores $f_D^*$ --- the minimal value of $f$ found so far, while BARON stores $f_B^U$ and $f_B^L$ --- the smallest upper bound and largest lower bound found found so far. Note that, for BARON, $f_B^U = f(\mvec{x}^k)$ in \eqref{prob: AREGO_subproblem_re}.}
	\label{table: solvers_descriptions}
	\begin{tabular}{p{2.42cm}|p{2.75cm}|p{2.85cm}|p{3cm}|p{2.83cm}}
		& \multicolumn{1}{c | }{DIRECT} & \multicolumn{1}{c |}{BARON} & \multicolumn{1}{c |}{mKNITRO}& \multicolumn{1}{c}{KNITRO}\\ \hline
		\mbox{Measure of} computational cost& function evaluations & CPU seconds & function evaluations & function evaluations \\ \hline
		 \mbox{Max. budget} to solve \eqref{prob: AREGO_subproblem_re} & 3000 function evaluations & 5 CPU seconds & 5 starting points & 1 starting point \\ 
		 & & & &\\
		 
		 \mbox{Max. budget} to solve \eqref{eq: GO} & 60000 function evaluations & 1000 CPU seconds & 100 starting points & Not applicable \\ \hline
		 
		 Termination  for \eqref{prob: AREGO_subproblem_re} & Terminate either on budget or if \mbox{$f_D^* \leq f^* + \epsilon$} &  Terminate either on budget or if $f_B^U$ and $f_B^L$ satisfy $f_B^U \leq f_B^L + \epsilon$ & Default options (unless overwritten by additional options) & Default options (unless overwritten by additional options)\\ 
		 
		 Termination for \eqref{eq: GO} & Same as above &  Terminate either on budget or if $f_B^U$ satisfies $f_B^U \leq f^* + \epsilon$ & Same as above & Not applicable\\ \hline
		 
		 Additional \hspace{0.2cm} options for \eqref{prob: AREGO_subproblem_re} & \verb|testflag|=1 
		 \verb|maxits|=Inf \verb|globalmin|=$f^*$ & Default options  & \verb|ms_enable|=1
		 \verb|fstopval|=$f^*+\epsilon$ & \verb|fstopval|=$f^*+\epsilon$ \\ 
		 & & & & \\
		 Additional \hspace{0.2cm} options for \eqref{eq: GO} & Same as above & Same as above  & Same as above & Not applicable \\ \hline
	\end{tabular}
\end{table}

\subsection{Numerical results}
\begin{figure}[!t]
    \centering
    \includegraphics[scale=0.6]{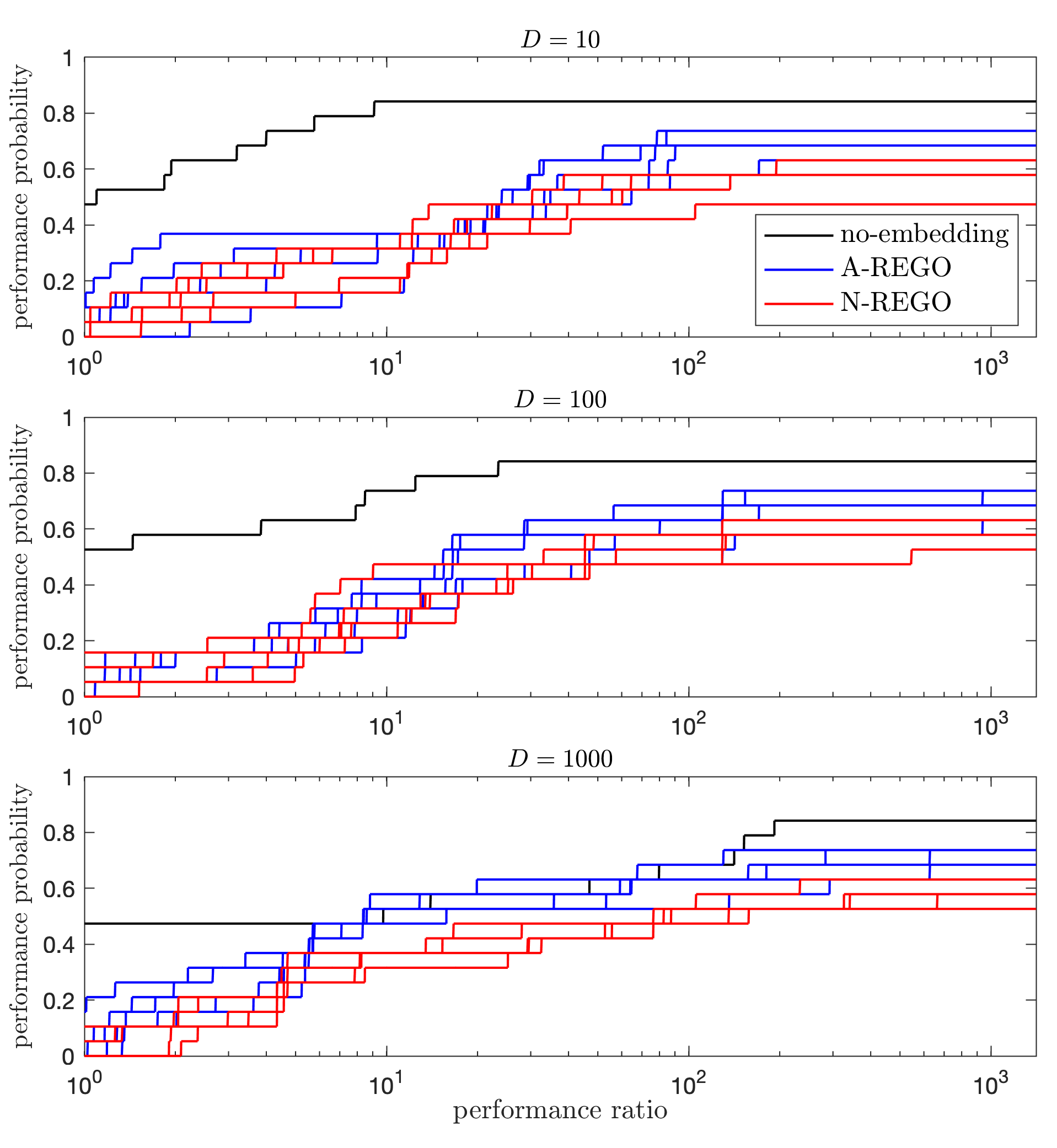}
    \caption{Comparison between  X-REGO variants and `no-embedding', using DIRECT to solve the subproblem \eqref{prob: AREGO_subproblem_re}.}
    \label{fig:DIRECT}
\end{figure}
\paragraph{DIRECT:} \Cref{fig:DIRECT} compares the adaptive and non-adaptive random embedding algorithms (A-REGO and N-REGO) to the no-embedding framework, when using the DIRECT solver for the reduced problem \eqref{prob: AREGO_subproblem_re} (and  for the full-dimensional problem in the case of the no-embedding framework). We find that the no-embedding framework outperforms the two X-REGO variants. We also note that this behaviour is more pronounced when the dimension of the problem \eqref{eq: GO} is small. In that regime, it is also difficult to determine which version of X-REGO performs the best. When $D$ is large, the no-embedding framework still outperforms the two variants of X-REGO, but among these two, the adaptive one (A-REGO) performs generally better than N-REGO. The median number of function evaluations required by the algorithms, measured over the five repetitions of the experiment, is given in \Cref{table: med_num_f_eval}. 


\paragraph{BARON:}  \Cref{fig:BARON} compares A-REGO and N-REGO to the no-embedding framework, when using BARON to solve the reduced problem \eqref{prob: AREGO_subproblem_re}. We find that the no-embedding framework is clearly outperformed by the two variants of X-REGO in the large-dimensional setting. Then, it is also clear that the adaptive variant of X-REGO outperforms the non-adaptive one. \Cref{table: med_num_f_eval} also indicates that the CPU time used by the different algorithms increases with the dimension of the problem, and that the increase is most rapid for  'no-embedding'.

\paragraph{KNITRO:} The comparison between the X-REGO variants, using (m)KNITRO to solve \eqref{prob: AREGO_subproblem_re}, is given in \Cref{fig:KNITRO}. Here, we also compare the local variants of X-REGO (namely, LA-REGO and LN-REGO), for which the reduced problem is solved using local KNITRO, with no multi-start feature. We find that the local variants outperform the global ones, and the no-embedding framework when the dimension of the problem is sufficiently large. \Cref{fig:KNITRO} also indicates that the local non-adaptive variant (LN-REGO) outperforms the adaptive one in this high-dimensional setting. This behaviour can also be observed in \Cref{table: med_num_f_eval}, which indicates that the median number of function evaluations increases significantly for LA-REGO while for LN-REGO,  it actually decreases.

\begin{figure}[!t]
    \centering
    \includegraphics[scale=0.6]{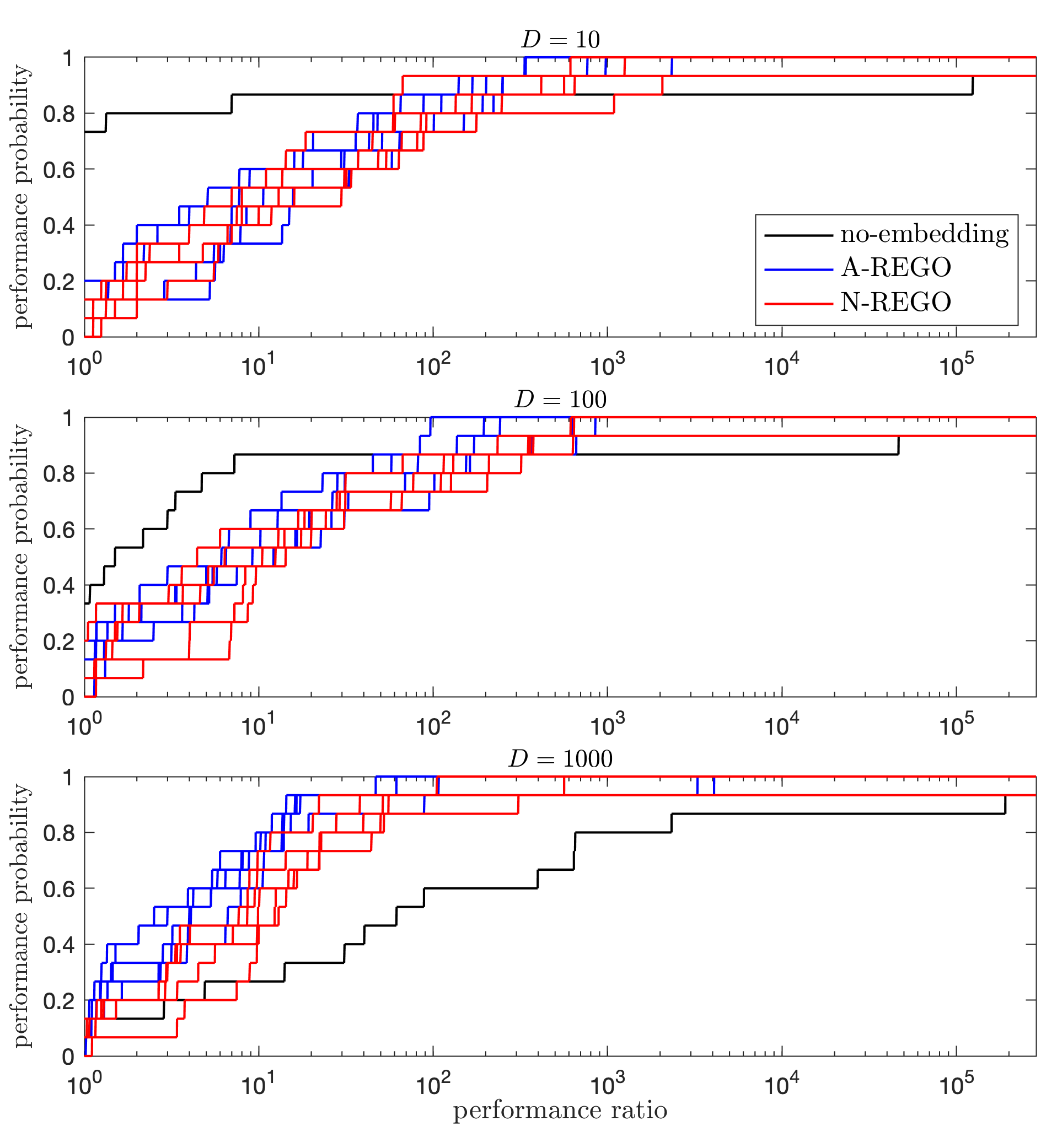}
    \caption{Comparison between  X-REGO variants and `no-embedding', using BARON to solve the subproblem \eqref{prob: AREGO_subproblem_re}.}
    \label{fig:BARON}
\end{figure}
\begin{figure}[!t]
    \centering
    \includegraphics[scale=0.6]{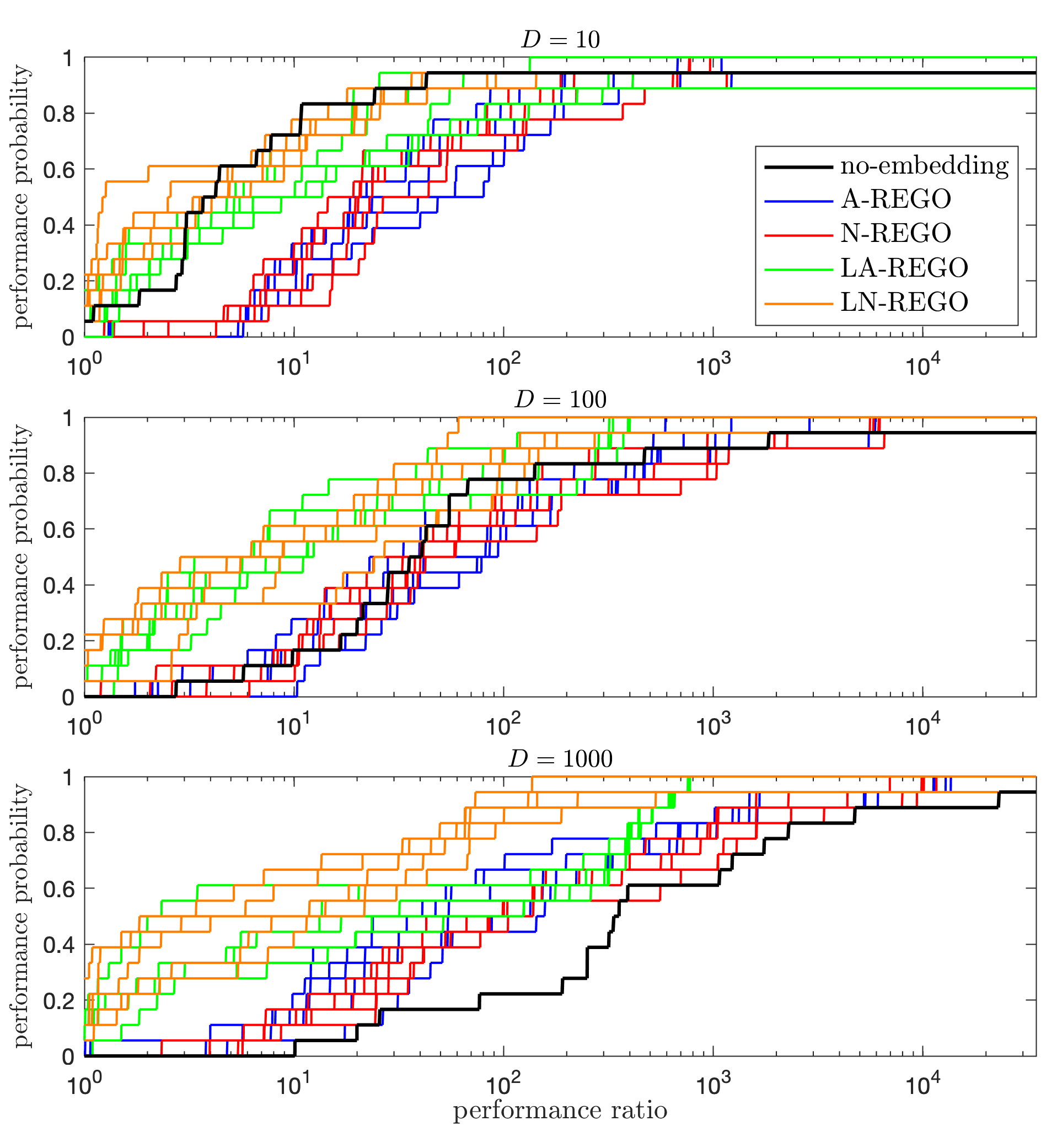}
    \caption{Comparison between  X-REGO variants and `no-embedding', using KNITRO to solve the subproblem \eqref{prob: AREGO_subproblem_re}.}
    \label{fig:KNITRO}
\end{figure}

\begin{table}[!t]
	\centering
	\footnotesize
	\caption{Median number of function evaluations or CPU time spent by each algorithm-solver pair.}
	\label{table: med_num_f_eval}
	\begin{tabular}{p{1.85cm}|p{1.05cm} p{1.05cm} p{1.05cm}|p{1.05cm} p{1.05cm} p{1.05cm}|p{1.05cm} p{1.05cm}p{1.05cm}|}
		&\multicolumn{3}{c |}{DIRECT (fun. evals)} & \multicolumn{3}{c |}{BARON (CPU time)}  & \multicolumn{3}{c |}{KNITRO (fun. evals)} \\
        & \mbox{$D = 10$} & \mbox{$D = 10^2$} &  \mbox{$D = 10^3$} & \mbox{$D = 10$} &  \mbox{$D = 10^2$} & \mbox{$D = 10^3$} & \mbox{$D = 10$} & \mbox{$D = 10^2$} & \mbox{$D = 10^3$}  \\ \hline
    \mbox{no-embedding} & \multicolumn{1}{r}{1261}  &\multicolumn{1}{r}{16933}   & \multicolumn{1}{r|}{63795}  &\multicolumn{1}{r}{0.08}  &\multicolumn{1}{r}{0.50}  &\multicolumn{1}{r|}{155.20}  &\multicolumn{1}{r}{220}  &\multicolumn{1}{r}{1425}  &\multicolumn{1}{r|}{11542} \\
    A-REGO & \multicolumn{1}{r}{24569}   &\multicolumn{1}{r}{300348}  &\multicolumn{1}{r|}{300276}  &\multicolumn{1}{r}{0.63}  &\multicolumn{1}{r}{1.93}  &\multicolumn{1}{r|}{15.66}  &\multicolumn{1}{r}{1534}  &\multicolumn{1}{r}{3992}  &\multicolumn{1}{r|}{5346}   \\
    N-REGO & \multicolumn{1}{r}{63093}  &\multicolumn{1}{r}{300484}  &\multicolumn{1}{r|}{300532}  &\multicolumn{1}{r}{0.82}  &\multicolumn{1}{r}{3.00}  &\multicolumn{1}{r|}{21.51}  &\multicolumn{1}{r}{1582}  &\multicolumn{1}{r}{3606}  &\multicolumn{1}{r|}{8766}  \\
    LA-REGO & \multicolumn{1}{r}{--} & \multicolumn{1}{r}{--} & \multicolumn{1}{r|}{--} & \multicolumn{1}{r}{--} & \multicolumn{1}{r}{--} & \multicolumn{1}{r|}{--} &\multicolumn{1}{r}{368}  &\multicolumn{1}{r}{631}  &\multicolumn{1}{r|}{2564}  \\
    LN-REGO & \multicolumn{1}{r}{--} & \multicolumn{1}{r}{--} & \multicolumn{1}{r|}{--} & \multicolumn{1}{r}{--} & \multicolumn{1}{r}{--} & \multicolumn{1}{r|}{--} &\multicolumn{1}{r}{220}  &\multicolumn{1}{r}{763}  &\multicolumn{1}{r|}{704}  \\
	\end{tabular}
\end{table}

\paragraph{Conclusions to numerical experiments}
The numerical experiments presented in this paper indicate that, as expected, the X-REGO algorithm is mostly beneficial  for high-dimensional problems, when $D$ is large. In this setting,  X-REGO variants paired with the  BARON and mKNITRO solvers
outperform the 'no-embedding' approach, of applying these solvers directly to the problems. It is less obvious to decide which variant of X-REGO is best, but it seems that, at least on the problem set considered, the local variants outperform the global ones.

\section{Conclusions and future work}
\label{sec:conclusions}

We studied a generic global optimization framework, X-REGO, that relies on multiple random embeddings, for bound-constrained global optimization  of functions with low effective dimensionality. For each random subspace, a lower-dimensional bound-constrained subproblem is solved, using a global or even local algorithm. 
Theoretical guarantees of convergence and encouraging numerical experiments are presented, which are particularly quantified in terms of their problem dimension dependence for the case when the effective subspace is aligned with the coordinate axes. We note that the X-REGO algorithmic framework (\Cref{alg: AREGO}) can be applied to a general, continuous objective $f$ in (P) as the effective dimensionality assumption is not used; furthermore, our main global convergence result continues to hold under some assumptions (see Remark \ref{rem:generalXREGO}).

Our analysis relies on the assumption that the dimension of the random subspace is larger than the effective dimension. As the latter may be unknown in practice,
 this is a strong prerequisite.  One possibility is to estimate the effective dimension $d_e$ numerically, as in \cite{Sanyang2016}. Otherwise, one may consider extending the theoretical analysis  in this paper to the case $d \leq d_e$. A relevant recent reference is \cite{Kirschner19}, where \citeauthor{Kirschner19}  proved global convergence of an algorithm similar to A-REGO, but using one-dimensional subspaces, within the framework of Bayesian optimization. 

\bibliographystyle{plainnat}
{\footnotesize
	\bibliography{bibliography}}

\appendix
\section{Technical definitions and results}
\subsection{Gaussian random matrices}
\begin{definition} (Gaussian matrix, see \cite[Definition 2.2.1]{Gupta1999}) \label{def: Gaussian_matrix} 
	A Gaussian (random) matrix is a matrix $\mtx{A} = (a_{ij})$, where the entries $a_{ij} \sim \mathcal{N}(0,1)$ are independent (identically distributed) standard normal variables. 
\end{definition}
Gaussian matrices have been well-studied with many results available at hand. Here, we mention a few key properties of Gaussian matrices that we use in the analysis; for a collection of results pertaining to Gaussian matrices and other related distributions refer to \cite{Gupta1999, Vershynin2018}. 

\begin{theorem} \label{thm: orthog_inv_of_Gaussian_matrices} (see \cite[Theorem 2.3.10]{Gupta1999})
	Let $\mtx{A}$ be a $D \times d$ Gaussian random matrix. If $\mtx{U} \in \mathbb{R}^{D \times p}$, $D \geq p $, and $\mtx{V} \in \mathbb{R}^{d \times q}$, $d \geq q$, are orthonormal, then $\mtx{U}^T\mtx{A}\mtx{V}$ is a Gaussian random matrix. 
\end{theorem}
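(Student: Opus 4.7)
The plan is to verify, entry by entry, that $\mtx{U}^T \mtx{A} \mtx{V}$ has the two properties required by Definition~2.2.1: its entries are standard normal, and they are mutually independent. First, I would observe that each entry decomposes as the double sum
\begin{equation*}
(\mtx{U}^T \mtx{A} \mtx{V})_{ij} = \sum_{k=1}^{D} \sum_{l=1}^{d} U_{ki}\, A_{kl}\, V_{lj},
\end{equation*}
so it is a fixed linear combination of the independent $\mathcal{N}(0,1)$ entries of $\mtx{A}$. By closure of the Gaussian family under linear combinations, each $(\mtx{U}^T \mtx{A} \mtx{V})_{ij}$ is Gaussian with mean zero, and more importantly, the whole matrix (vectorized) is jointly Gaussian with mean zero.

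Next, I would compute the covariance between any two entries. Using independence and unit variance of the $A_{kl}$,
\begin{equation*}
\expv\!\left[(\mtx{U}^T \mtx{A} \mtx{V})_{ij} (\mtx{U}^T \mtx{A} \mtx{V})_{i'j'}\right] = \sum_{k,l} U_{ki} V_{lj} U_{ki'} V_{lj'} = \Bigl(\sum_k U_{ki} U_{ki'}\Bigr)\Bigl(\sum_l V_{lj} V_{lj'}\Bigr).
\end{equation*}
Because $\mtx{U}$ and $\mtx{V}$ have orthonormal columns, both bracketed sums equal $\delta_{ii'}$ and $\delta_{jj'}$ respectively, so the covariance reduces to $\delta_{ii'}\delta_{jj'}$. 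In particular, setting $(i,j)=(i',j')$ gives unit variance, so each entry is $\mathcal{N}(0,1)$, and for distinct index pairs the covariance vanishes. Since the entries are jointly Gaussian, zero covariance upgrades to full independence, which is exactly what is required for $\mtx{U}^T \mtx{A} \mtx{V}$ to be Gaussian in the sense of \Cref{def: Gaussian_matrix}.

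There is no real obstacle here: the argument is essentially a one-line computation once the vectorization/orthonormality is used. If preferred, the same conclusion can be obtained compactly via the Kronecker identity $\vc(\mtx{U}^T \mtx{A} \mtx{V}) = (\mtx{V}^T \otimes \mtx{U}^T)\vc(\mtx{A})$, which gives $\vc(\mtx{U}^T \mtx{A} \mtx{V}) \sim \mathcal{N}(\mvec{0}, (\mtx{V}^T\mtx{V}) \otimes (\mtx{U}^T\mtx{U})) = \mathcal{N}(\mvec{0}, \mtx{I}_q \otimes \mtx{I}_p) = \mathcal{N}(\mvec{0}, \mtx{I}_{pq})$, yielding the result immediately. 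I would most likely just cite \cite[Theorem 2.3.10]{Gupta1999} rather than reproduce the computation in full.
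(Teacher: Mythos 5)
Your computation is correct: the entries of $\mtx{U}^T\mtx{A}\mtx{V}$ are jointly Gaussian, the orthonormality of the columns of $\mtx{U}$ and $\mtx{V}$ gives identity covariance, and zero correlation upgrades to independence in the jointly Gaussian setting. The paper itself gives no proof and simply cites Theorem 2.3.10 of Gupta and Nagar, which matches your closing remark, so your treatment is consistent with (and a valid self-contained substitute for) the paper's.
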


\begin{theorem} \label{thm:indep_of_Gaussian_matrices} (see \cite[Theorem 2.3.15]{Gupta1999}) Let $\mtx{A}$ be a $D \times d$ Gaussian random matrix, and let $\mtx{X} \in \mathbb{R}^{r \times D}$ and $\mtx{Y} \in \mathbb{R}^{q \times D}$ be given matrices. Then, $\mtx{X}\mtx{A}$ and $\mtx{Y}\mtx{A}$ are independent if and only if $\mtx{X}\mtx{Y}^T = \mtx{0}$.
\end{theorem}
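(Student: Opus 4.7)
The plan is to reduce the statement to the standard fact that, for jointly Gaussian random vectors, independence is equivalent to vanishing cross-covariance. The main work consists in computing the cross-covariance of $\mtx{X}\mtx{A}$ and $\mtx{Y}\mtx{A}$ and identifying it with $\mtx{X}\mtx{Y}^T$.

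First I would observe that, by \Cref{def: Gaussian_matrix}, the entries of $\mtx{A}$ are i.i.d.\ $\mathcal{N}(0,1)$, so every linear functional of $\mtx{A}$ is centred Gaussian, and any finite family of such linear functionals is jointly Gaussian. In particular, stacking the entries of $\mtx{X}\mtx{A}$ and $\mtx{Y}\mtx{A}$ into one long random vector yields a jointly Gaussian random vector. I would then compute, for arbitrary entries,
\begin{align*}
\mathbb{E}\bigl[(\mtx{X}\mtx{A})_{ij}\,(\mtx{Y}\mtx{A})_{kl}\bigr]
&= \sum_{m,n} X_{im} Y_{kn}\, \mathbb{E}[A_{mj} A_{nl}]
= \sum_{m,n} X_{im} Y_{kn}\, \delta_{mn}\delta_{jl} \\
&= \delta_{jl}\, (\mtx{X}\mtx{Y}^T)_{ik},
\end{align*}
where the second equality uses the independence and unit variance of the entries of $\mtx{A}$. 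Hence all cross-covariances vanish if and only if $\mtx{X}\mtx{Y}^T = \mtx{0}$.

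To conclude, I would invoke the standard characterization of independence for jointly Gaussian random vectors: two sub-vectors of a jointly Gaussian vector are independent if and only if their cross-covariance is zero. The ``only if'' direction is immediate (independent random variables with finite second moments have zero covariance), while the ``if'' direction follows from the fact that the joint density of a Gaussian vector with block-diagonal covariance matrix factorizes into the product of the marginals. Applying this characterization to the jointly Gaussian vector obtained by stacking the entries of $\mtx{X}\mtx{A}$ and $\mtx{Y}\mtx{A}$, together with the cross-covariance computation above, yields the equivalence.

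The only real obstacle is making the joint Gaussianity argument airtight: one must be careful that $\mtx{X}\mtx{A}$ and $\mtx{Y}\mtx{A}$ are not merely \emph{marginally} Gaussian but \emph{jointly} Gaussian, since otherwise zero covariance need not imply independence. This follows directly from the fact that each entry of either matrix is a fixed linear combination of the entries of $\mtx{A}$, and linear combinations of jointly Gaussian variables are jointly Gaussian; in our case the underlying vector of entries of $\mtx{A}$ is i.i.d.\ standard normal, which is trivially jointly Gaussian.
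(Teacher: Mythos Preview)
Your proof is correct. Note, however, that the paper does not actually supply its own proof of this statement: it is listed in the appendix of technical background results and simply cited from \cite[Theorem 2.3.15]{Gupta1999}. So there is no ``paper's proof'' to compare against; your argument is the standard one and would be acceptable as a self-contained justification.

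One small caveat: in the ``if'' direction you appeal to the factorization of the joint Gaussian \emph{density} when the covariance is block-diagonal. This is only literally correct when the covariance matrix is nonsingular, which need not hold here (e.g., if $\mtx{X}$ has linearly dependent rows). The cleaner route, which works in full generality, is to use the characteristic function: for a centred Gaussian vector $\mvec{z}$ with covariance $\mtx{\Sigma}$ one has $\mathbb{E}[e^{i\mvec{t}^T\mvec{z}}] = e^{-\frac{1}{2}\mvec{t}^T\mtx{\Sigma}\mvec{t}}$, and if $\mtx{\Sigma}$ is block-diagonal this factorizes regardless of rank. This is a minor polishing point and does not affect the validity of your overall argument.
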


\begin{theorem} \label{thm: Wishart_is_nonsingular} (see \cite[Theorem 3.2.1]{Gupta1999})
	Let $\mtx{A}$ be a $D \times d$ Gaussian random matrix with $D \geq d$. Then, the Wishart matrix $\mtx{A}^T\mtx{A}$ is positive definite, and hence nonsingular, with probability one. 
\end{theorem}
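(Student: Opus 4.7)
The plan is to show that $\mtx{A}^T \mtx{A}$ fails to be positive definite precisely when $\mtx{A}$ has linearly dependent columns, and then argue that this event has Gaussian probability zero. Since $\mtx{A}^T \mtx{A}$ is always symmetric positive semidefinite (being a Gram matrix), positive definiteness is equivalent to $\mtx{A}$ having full column rank $d$, which in turn (for $D \geq d$) is equivalent to $\det(\mtx{A}^T \mtx{A}) \neq 0$.

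I would then exhibit the key structural fact: the map $\mtx{A} \mapsto \det(\mtx{A}^T \mtx{A})$ is a polynomial of degree $2d$ in the $Dd$ entries of $\mtx{A}$, and this polynomial is not identically zero because, for instance, the matrix whose top $d \times d$ block is $\mtx{I}_d$ and whose remaining rows are zero satisfies $\det(\mtx{A}^T \mtx{A}) = 1$. A standard fact (easily proved by induction on the number of variables, splitting off one variable at a time using that a univariate nonzero polynomial has finitely many roots) states that the zero set of a nonzero real polynomial has Lebesgue measure zero in $\mathbb{R}^{Dd}$. Since the distribution of $\mtx{A}$ has density $(2\pi)^{-Dd/2} \exp(-\|\mtx{A}\|_F^2 / 2)$ with respect to Lebesgue measure on $\mathbb{R}^{Dd}$, it is absolutely continuous, and therefore assigns probability zero to the same set. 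Hence $\det(\mtx{A}^T \mtx{A}) > 0$ almost surely, giving the result.

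An alternative, and perhaps more elementary route, would be induction on $d$. For $d = 1$, $\mtx{A}^T \mtx{A} = \|\mtx{a}\|^2$, which vanishes only on the Gaussian-null event $\{\mtx{a} = \mvec{0}\}$. For the inductive step, write $\mtx{A} = (\mtx{A}_{d-1} \mid \mvec{a}_d)$, where by the inductive hypothesis $\mtx{A}_{d-1}$ has rank $d-1$ almost surely, and $\mvec{a}_d$ is an independent standard Gaussian in $\mathbb{R}^D$. Conditional on $\mtx{A}_{d-1}$, the column $\mvec{a}_d$ lies in the $(d-1)$-dimensional subspace $\range(\mtx{A}_{d-1})$ with probability zero (since that subspace is a proper, hence Lebesgue-null, affine subset of $\mathbb{R}^D$, and the Gaussian density is absolutely continuous). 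Taking the outer expectation gives that $\mtx{A}$ has full column rank almost surely.

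There is no real obstacle here; the only subtlety worth flagging explicitly is that the argument needs $D \geq d$, which is used in both routes: in the polynomial argument to ensure the existence of a rank-$d$ witness matrix, and in the inductive argument to ensure that $\range(\mtx{A}_{d-1}) \subsetneq \mathbb{R}^D$ so that it has measure zero. Either argument is short; I would likely present the polynomial/absolute-continuity version as it is the most concise and also highlights why no special feature of the Gaussian distribution beyond its absolute continuity with respect to Lebesgue measure is needed.
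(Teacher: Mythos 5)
Your argument is correct, and both routes you sketch are complete and sound. Note, however, that the paper does not actually prove this statement: it is imported verbatim as Theorem 3.2.1 of Gupta and Nagar's \emph{Matrix Variate Distributions}, so there is no internal proof to compare against. What your write-up adds is a self-contained justification. The reduction of positive definiteness of the Gram matrix $\mtx{A}^T\mtx{A}$ to full column rank of $\mtx{A}$ is standard; the polynomial route correctly identifies the rank-deficient locus as the zero set of the non-identically-vanishing polynomial $\det(\mtx{A}^T\mtx{A})$ (your witness with top block $\mtx{I}_d$ settles non-vanishing, and this is indeed where $D \geq d$ enters), and the passage from Lebesgue-null to Gaussian-null via absolute continuity is exactly right. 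The inductive route is equally valid: conditionally on $\mtx{A}_{d-1}$ having rank $d-1$, the independent Gaussian column $\mvec{a}_d$ falls in the $(d-1)$-dimensional subspace $\range(\mtx{A}_{d-1})$ with probability zero because $d-1 < D$. Your closing observation --- that nothing about the Gaussian beyond absolute continuity with respect to Lebesgue measure is used --- is a genuine strengthening over the cited formulation, which is stated only for Wishart matrices built from Gaussian ensembles. Either version of your argument would serve as a drop-in replacement for the external citation.
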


\subsection{Other relevant probability distributions}
\begin{definition}[Chi-squared distribution]  \label{def:chi-squared_rv}

	Given a collection $Z_1, Z_2, \dots, Z_n$ of $n$ independent standard normal variables, the random variable $W = Z_1^2+Z_2^2+\cdots Z_n^2$ is said to follow the chi-squared distribution with $n$ degrees of freedom (see \cite[A.2]{Lee2012}). We denote this by $W \sim \chi_n^2$.
\end{definition}

\begin{theorem}\label{thm:Rayleigh_quotient}
(see \cite[Theorem 3.3.12]{Gupta1999}) Let $\mtx{M}$ be an $n \times l$ Gaussian matrix with $n \geq l$, $\mvec{y}$ be an $l \times 1$ random vector distributed independently of $\mtx{M}^T\mtx{M}$, and $\prob[\mvec{y} \neq \mvec{0}] = 1$. Then, 
$$ \frac{\mvec{y}^T \mtx{M}^T\mtx{M}\mvec{y}}{\mvec{y}^T\mvec{y}} \sim \chi^2_n$$
and is independent of $\mvec{y}$.
\end{theorem}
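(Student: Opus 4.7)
The plan is to condition on $\mvec{y}$, show that the conditional law of the Rayleigh quotient is $\chi_n^2$ for \emph{every} realization of $\mvec{y}$, and then read off both the distributional and the independence assertions from this constancy.

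Concretely, since $\prob[\mvec{y}\neq\mvec{0}]=1$, the direction $\hat{\mvec{y}}:=\mvec{y}/\|\mvec{y}\|_2$ is almost surely well defined, and
$$ \frac{\mvec{y}^T\mtx{M}^T\mtx{M}\mvec{y}}{\mvec{y}^T\mvec{y}}=\hat{\mvec{y}}^T\mtx{M}^T\mtx{M}\hat{\mvec{y}}=\|\mtx{M}\hat{\mvec{y}}\|_2^2, $$
a measurable function of the pair $(\mtx{M}^T\mtx{M},\hat{\mvec{y}})$ whose two arguments are independent by hypothesis. I would then fix an arbitrary deterministic unit vector $\mvec{u}\in\mathbb{R}^l$ and compute the law of $\|\mtx{M}\mvec{u}\|_2^2$ under the Gaussian law of $\mtx{M}$ from \Cref{def: Gaussian_matrix}.

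For such a fixed $\mvec{u}$, the $i$-th coordinate of $\mtx{M}\mvec{u}$ equals $\sum_{j=1}^{l}m_{ij}u_j$, a linear combination of i.i.d.\ $\mathcal{N}(0,1)$ entries with coefficients satisfying $\sum_j u_j^2=1$, hence itself $\mathcal{N}(0,1)$. The rows of $\mtx{M}$ being independent, the $n$ coordinates of $\mtx{M}\mvec{u}$ are mutually independent, so $\mtx{M}\mvec{u}\sim\mathcal{N}(\mvec{0},\mtx{I}_n)$ and, by \Cref{def:chi-squared_rv}, $\|\mtx{M}\mvec{u}\|_2^2\sim\chi_n^2$. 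Because this conditional law does not depend on $\mvec{u}$, the conditional distribution of the Rayleigh quotient given $\hat{\mvec{y}}$ is the fixed measure $\chi_n^2$; marginalising yields the distributional claim, and constancy of the conditional law in $\hat{\mvec{y}}$ (hence in $\mvec{y}$) yields the stated independence.

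The only mildly delicate point is the passage from ``$\hat{\mvec{y}}$ is independent of $\mtx{M}^T\mtx{M}$'' to ``we may condition on $\hat{\mvec{y}}=\mvec{u}$ and analyse $\mtx{M}$ as Gaussian'', which requires a standard regular-conditional-distribution argument. Once that is in place, the remainder is a direct application of the closure of the Gaussian family under linear combinations and of \Cref{def:chi-squared_rv}, with no further ideas needed.
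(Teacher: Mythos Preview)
The paper does not supply its own proof of this statement: it is quoted as \cite[Theorem 3.3.12]{Gupta1999} and used as a black box. Your argument is correct and is essentially the standard textbook proof (orthogonal invariance of the Gaussian law reduces the computation to a single fixed direction, where the quadratic form is a sum of $n$ independent squared standard normals).

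One clarification on the point you flag as ``mildly delicate''. You do not in fact need to pass from independence of $\mvec{y}$ and $\mtx{M}^T\mtx{M}$ to any statement about $\mtx{M}$ conditioned on $\mvec{y}$. The Rayleigh quotient depends on $\mtx{M}$ only through $\mtx{W}:=\mtx{M}^T\mtx{M}$, so conditioning on $\mvec{y}=\mvec{v}$ leaves the law of $\mtx{W}$ unchanged (by the assumed independence), and the conditional law of the quotient is simply the \emph{marginal} law of $\hat{\mvec{v}}^T\mtx{W}\hat{\mvec{v}}$. To identify that marginal law you are free to use any representation of $\mtx{W}$, in particular $\mtx{W}=\mtx{M}^T\mtx{M}$ with $\mtx{M}$ Gaussian, which gives $\|\mtx{M}\hat{\mvec{v}}\|_2^2\sim\chi_n^2$ exactly as you wrote. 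No regular-conditional-distribution subtlety arises. Your final parenthetical ``(hence in $\mvec{y}$)'' is also justified by the same reasoning: conditioning on $\mvec{y}=\mvec{v}$ rather than on $\hat{\mvec{y}}=\mvec{u}$ still yields the $\chi_n^2$ law, so independence from the full $\mvec{y}$ follows.
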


\begin{definition}[Inverse chi-squared distribution] \label{def: inv_chi-squared}
	Given $X \sim \chi_n^2$, the random variable $Y = 1/X$ is said to follow the inverse chi-squared distribution with $n$ degrees of freedom. We denote this by $Y \sim 1/\chi_n^2$ (see \cite[A.5]{Lee2012}).  
\end{definition}

\begin{definition}[Multivariate $t$-distribution]
An $l$-dimensional random variable $\mvec{t}$ is said to have $t$-distribution with parameters $\nu$ and $\mtx{\Sigma}$ if its joint p.d.f. is given by (see \cite[Chapter 4]{Gupta1999})
\begin{equation} \label{eq:pdf_of_t_distr}
    f(\mvec{t}) = \frac{1}{(\pi \nu)^{l/2}}\left[ \frac{\Gamma\left(\frac{l+\nu}{2}\right)}{\Gamma\left(\frac{\nu}{2}\right)} \right] \det(\mtx{\Sigma})^{-1/2} \left(1+\frac{1}{\nu} \mvec{t}^T \mtx{\Sigma}^{-1}\mvec{t} \right)^{-(l+\nu)/2},
\end{equation}
where $\Gamma$ is the usual gamma function.
\end{definition}

\begin{definition}[$F$-distribution] \label{def:F-distribution} 
Let $W_1 \sim \chi_{\nu_1}^2$ and $W_2 \sim \chi_{\nu_2}^2$ be independent. A random variable $X$ is said to follow an $F$-distribution with degrees of freedom $\nu_1$ and $\nu_2$ if 
$$ X \sim \frac{W_1/\nu_1}{W_2/\nu_2}.$$
We denote this by $X \sim F(\nu_1,\nu_2)$.
The p.d.f. of $X$ is given by (see \cite[A.19]{Lee2012})
\begin{equation}\label{eq:pdf_of_F}
\text{$f(x) = \frac{\Gamma(\frac{\nu_1+\nu_2}{2})}{\Gamma(\frac{\nu_1}{2})\Gamma(\frac{\nu_2}{2})} \left( \frac{\nu_1}{\nu_2} \right)^{\nu_1/2} x^{\nu_1/2-1} \left( 1+\frac{\nu_1}{\nu_2}x \right)^{-\frac{\nu_1+\nu_2}{2}}$ for $x>0$.}
\end{equation}
\end{definition}

\subsection{Additional relevant results}

\begin{lemma} \label{lemma: Fang_equally_distributed_random_vectors} \cite[p.~13]{Fang1990}
	Let $\mvec{x}$ and $\mvec{y}$ be random vectors such that $\mvec{x} \stackrel{law}{=} \mvec{y}$ and let $f_i(\cdot)$, $i=1,2,\dots,m$, be measurable functions. Then,
	$$ \begin{pmatrix}
	f_1(\mvec{x}) & f_2(\mvec{x}) & \dots & f_m(\mvec{x}) 
	\end{pmatrix}^T \stackrel{law}{=}  \begin{pmatrix}
	f_1(\mvec{y}) & f_2(\mvec{y}) & \dots & f_m(\mvec{y})
	\end{pmatrix}^T.$$
\end{lemma}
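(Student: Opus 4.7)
The plan is to derive this lemma from the standard characterization that two random vectors have the same distribution if and only if their push-forward probability measures agree on every Borel set. First I would introduce the combined measurable mapping $F\colon \mathbb{R}^n \to \mathbb{R}^m$ defined by $F(\mvec{z}) := (f_1(\mvec{z}), f_2(\mvec{z}), \dots, f_m(\mvec{z}))^T$, where $n$ denotes the common dimension of $\mvec{x}$ and $\mvec{y}$. Because each component $f_i$ is measurable, the joint map $F$ is itself Borel measurable: for any product set $B_1 \times \cdots \times B_m$ with each $B_i$ Borel in $\mathbb{R}$, its preimage is $\bigcap_{i=1}^m f_i^{-1}(B_i)$, which is Borel in $\mathbb{R}^n$; a standard $\pi$-$\lambda$ (or generator) argument then extends measurability of $F$ to all Borel subsets of $\mathbb{R}^m$.

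Next, I would fix an arbitrary Borel set $B \subseteq \mathbb{R}^m$ and compute
\[
\prob\bigl[F(\mvec{x}) \in B\bigr] \;=\; \prob\bigl[\mvec{x} \in F^{-1}(B)\bigr] \;=\; \prob\bigl[\mvec{y} \in F^{-1}(B)\bigr] \;=\; \prob\bigl[F(\mvec{y}) \in B\bigr],
\]
where the middle equality uses the assumption $\mvec{x} \stackrel{law}{=} \mvec{y}$ together with the fact that $F^{-1}(B)$ is a Borel subset of $\mathbb{R}^n$, so that the two distributions of $\mvec{x}$ and $\mvec{y}$ must agree on it. Since this identity holds for every Borel $B \subseteq \mathbb{R}^m$, the push-forward measures of $F(\mvec{x})$ and $F(\mvec{y})$ coincide, which is precisely the meaning of $F(\mvec{x}) \stackrel{law}{=} F(\mvec{y})$, the desired conclusion.

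There is no substantive obstacle in this argument; it is essentially a textbook measure-theoretic identity (indeed why the result is cited to Fang's book rather than reproved). The only subtlety worth articulating carefully is the Borel measurability of the joint map $F$, which reduces to coordinatewise measurability of the $f_i$ via the product structure of the Borel $\sigma$-algebra on $\mathbb{R}^m$. If needed, the argument generalizes verbatim to $f_i$ mapping into arbitrary measurable spaces, once one replaces ``Borel set'' by ``measurable set'' in the target space.
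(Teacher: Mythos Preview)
Your proof is correct and complete; the paper does not actually prove this lemma but simply cites it from \cite[p.~13]{Fang1990}, so there is no paper proof to compare against. Your measure-theoretic argument via the push-forward of the joint map $F$ is the standard one and is exactly what underlies the cited result.
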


The last results apply to spherical probability distributions, defined as follows (for more details regarding spherical distributions, refer to \cite{Fang1990, Gupta1999, Bernardo2000}).

\begin{definition}\label{def: spherical_distribution}
	An $n\times 1$ random vector $\mtx{x}$ is said to have a spherical distribution if for every orthogonal $n \times n$ matrix $\mtx{U}$,
	$$ \mtx{U}\mvec{x} \stackrel{law}{=} \mvec{x}.$$
\end{definition}



\begin{theorem} \label{thm: pdf_of_x_Gupta} (see \cite[Theorem 2.1.]{Gupta1997})
	Let $\mvec{x} \stackrel{law}{=} r\mvec{u}$ be a spherically distributed $n \times 1$ random vector with $\prob[\mvec{x} = \mvec{0}] = 0$, where $r$ is independent of $\mvec{u}$ with p.d.f. $h(\cdot)$. Then, the p.d.f. $g(\hat{\mvec{x}})$ of $\mvec{x}$ is given by 
	$$ g(\hat{\mvec{x}}) = \frac{\Gamma(n/2)}{2\pi^{n/2}} h(\| \hat{\mvec{x}} \|) \| \hat{\mvec{x}} \|^{1-n}. $$
\end{theorem}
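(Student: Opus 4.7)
The plan is to compute the density of $\mvec{x}$ by viewing $(r,\mvec{u})$ as polar coordinates and pushing forward the product density through the polar-to-Cartesian change of variables. The key preliminary observation is that, although the statement only assumes $r$ independent of $\mvec{u}$ with p.d.f.\ $h$, the spherical symmetry of $\mvec{x}$ forces $\mvec{u}$ to be uniformly distributed on the unit sphere $S^{n-1} \subset \mathbb{R}^n$. Indeed, for any orthogonal $\mtx{U}$, \Cref{def: spherical_distribution} gives $\mtx{U}\mvec{x} \stackrel{law}{=} \mvec{x}$, which combined with $\mvec{x} \stackrel{law}{=} r\mvec{u}$ and the independence of $r$ and $\mvec{u}$ yields $r(\mtx{U}\mvec{u}) \stackrel{law}{=} r\mvec{u}$; conditioning on $r>0$ (allowed since $\prob[\mvec{x}=\mvec{0}]=0$ implies $\prob[r=0]=0$) and using \Cref{lemma: Fang_equally_distributed_random_vectors} with $f(\mvec{y}) = \mvec{y}/\|\mvec{y}\|$ shows $\mtx{U}\mvec{u} \stackrel{law}{=} \mvec{u}$. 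Thus the distribution of $\mvec{u}$ is invariant under the orthogonal group on $S^{n-1}$ and is therefore the normalized uniform (Haar) measure on the sphere.

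Next, I would recall that the surface area of $S^{n-1}$ is $\omega_{n-1} = \tfrac{2\pi^{n/2}}{\Gamma(n/2)}$, so the density of $\mvec{u}$ with respect to the surface measure $d\sigma$ is the constant $\tfrac{\Gamma(n/2)}{2\pi^{n/2}}$. By independence of $r$ and $\mvec{u}$, the joint density of $(r,\mvec{u})$ on $(0,\infty)\times S^{n-1}$ with respect to $dr\,d\sigma(\mvec{u})$ is
\[
  p_{r,\mvec{u}}(r,\mvec{u}) \;=\; h(r)\cdot \frac{\Gamma(n/2)}{2\pi^{n/2}}.
\]

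The final step is to change variables from $(r,\mvec{u})$ to $\hat{\mvec{x}} = r\mvec{u} \in \mathbb{R}^n\setminus\{\mvec{0}\}$. The standard polar decomposition of Lebesgue measure gives $d\hat{\mvec{x}} = r^{n-1}\,dr\,d\sigma(\mvec{u})$, so expressing $dr\,d\sigma(\mvec{u}) = \|\hat{\mvec{x}}\|^{1-n}\,d\hat{\mvec{x}}$ and substituting $r = \|\hat{\mvec{x}}\|$ yields
\[
  g(\hat{\mvec{x}}) \;=\; p_{r,\mvec{u}}(\|\hat{\mvec{x}}\|,\hat{\mvec{x}}/\|\hat{\mvec{x}}\|)\cdot \|\hat{\mvec{x}}\|^{1-n} \;=\; \frac{\Gamma(n/2)}{2\pi^{n/2}}\,h(\|\hat{\mvec{x}}\|)\,\|\hat{\mvec{x}}\|^{1-n},
\]
which is the claimed formula. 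The main obstacle, if any, is the first step of justifying that $\mvec{u}$ is uniform on $S^{n-1}$ rigorously from the hypotheses given (sphericity of $\mvec{x}$ plus independence of $r$ from $\mvec{u}$); once that is in hand, the rest is a routine polar-coordinate computation.
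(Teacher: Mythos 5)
Your proof is correct, but there is nothing in the paper to compare it against: the paper states this result as an imported black box, citing Gupta and Varga (1997, Theorem 2.1) without reproducing any argument. Your derivation is the standard self-contained one, and it is sound. The one point worth flagging is at the start: the theorem as written never says that $\mvec{u}$ lies on the unit sphere, and you correctly recognise that this is implicit (the formula would be false otherwise, e.g.\ if $\mvec{u}$ were a scaled direction, since then $\|\mvec{x}\| \neq r$). Given that convention, your argument that sphericity of $\mvec{x}$ plus the independence of $r$ and $\mvec{u}$ forces $\mtx{U}\mvec{u} \stackrel{law}{=} \mvec{u}$ for every orthogonal $\mtx{U}$, hence $\mvec{u}$ is Haar-uniform on $S^{n-1}$, is correct, and the use of \Cref{lemma: Fang_equally_distributed_random_vectors} with the measurable map $\mvec{y} \mapsto \mvec{y}/\|\mvec{y}\|$ (well defined off the null event $\{\mvec{x}=\mvec{0}\}$) is exactly the right tool from the paper's own toolkit. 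The remaining computation — constant directional density $\Gamma(n/2)/(2\pi^{n/2})$ from the surface area $2\pi^{n/2}/\Gamma(n/2)$ of $S^{n-1}$, followed by the polar decomposition $d\hat{\mvec{x}} = r^{n-1}\,dr\,d\sigma(\mvec{u})$ — is routine and gives precisely the claimed $g(\hat{\mvec{x}})$. What your approach buys, relative to the paper's citation, is a transparent proof from first principles that makes visible where each factor in the density comes from; the cost is having to justify the uniformity of the angular part, which the cited reference packages into its definition of the stochastic representation.
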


\section{Proof of \texorpdfstring{\Cref{thm: By=z}}{Theorem 2.5} } \label{app:proof_of_By=z}
We prove that $\mvec{y}^* \in S^*$ if and only if $\mtx{B} \mvec{y}^*  = \mvec{z}^*$; \eqref{eq: Euclidean_norm_sol_explicit_formula} then immediately follows from \eqref{eq: minimal_2_norm}.
Let $\mvec{y}^* \in \mathbb{R}^d$ be such that $\mtx{A}\mvec{y}^* + \mvec{p}\in \mathcal{G}^*$. First, we establish that 
	\begin{equation}\label{assert: f(Ay)=f(x)_iff_x=UU^TAy}
	\text{$\mtx{A}\mvec{y}^* + \mvec{p}\in \mathcal{G}^*$ if and only if $\mvec{x}_{\top}^* - \mvec{p}_{\top} = \mtx{U}\mtx{U}^T\mtx{A}\mvec{y}^*$.}
	\end{equation}
	Suppose that $\mtx{A}\mvec{y}^* + \mvec{p} \in \mathcal{G}^*$. Then, using the definition of $\mathcal{G}^*$ (see \Cref{def:AREGO_choice_of_x^*}) we can write $\mtx{A}\mvec{y}^* + \mvec{p} = \mvec{x}_{\top}^* + \tilde{\mvec{x}}$ for some $\tilde{\mvec{x}} \in \mathcal{T}^{\perp}$.
	We have
	$$ \mtx{U}\mtx{U}^T\mtx{A}\mvec{y}^*+\mvec{p}_{\top} =  \mtx{U}\mtx{U}^T(\mtx{A}\mvec{y}^*+\mvec{p}) = \mtx{U}\mtx{U}^T(\mvec{x}_{\top}^* + \tilde{\mvec{x}}) = \mvec{x}_{\top}^*,$$
	where we have used $\mtx{U}\mtx{U}^T \mvec{x}_{\top}^* = \mvec{x}_{\top}^*$ and $\mtx{U}\mtx{U}^T \tilde{\mvec{x}} = \mvec{0}$. Conversely, assume that $\mvec{y}^*$ satisfies 
	\begin{equation}\label{eq: x_T^*=UU^TAy}
	\mvec{x}^*_{\top} - \mvec{p}_{\top} = \mtx{U}\mtx{U}^T \mtx{A}\mvec{y}^*.
	\end{equation}
	Denote by $\mtx{S}$ the $D \times D$ orthogonal matrix $(\mtx{U} \; \mtx{V})$, where $\mtx{V}$ is defined in \Cref{ass:AREGO_fun_eff_dim}. Using \eqref{eq: x_T^*=UU^TAy} and the identity $\mtx{U}\mtx{U}^T + \mtx{V}\mtx{V}^T = \mtx{S} \mtx{S}^T = \mtx{I}_D$, we obtain
	\begin{equation*}
	\begin{aligned}
	    \mtx{A}\mvec{y}^* + \mvec{p} & = (\mtx{U}\mtx{U}^T + \mtx{V}\mtx{V}^T)(\mtx{A}\mvec{y}^*+\mvec{p}) \\
	    & = \mtx{U}\mtx{U}^T\mtx{A}\mvec{y}^* + \mtx{U}\mtx{U}^T\mvec{p}+\mtx{V}\mtx{V}^T(\mtx{A}\mvec{y}^*+\mvec{p})  \\
	    & = \mvec{x}^*_{\top} - \mvec{p}_{\top} + \mvec{p}_{\top} + \mtx{V}\mtx{V}^T(\mtx{A}\mvec{y}^*+\mvec{p}) \\
	    & = \mvec{x}_{\top}^* + \mtx{V}\mtx{V}^T(\mtx{A}\mvec{y}^*+\mvec{p}).
	\end{aligned}
	\end{equation*}
	Note that $\mtx{V}\mtx{V}^T(\mtx{A}\mvec{y}^*+\mvec{p})$ lies on $\mathcal{T}^{\perp}$ as it is the orthogonal projection of $\mvec{A}\mvec{y}^*+\mvec{p}$ onto $\mathcal{T}^{\perp}$, which implies that $\mtx{A}\mvec{y}^*+\mvec{p} \in \mathcal{G}^*$.
	This completes the proof of \eqref{assert: f(Ay)=f(x)_iff_x=UU^TAy}.
	
	Now we show that \eqref{eq: By=z} and \eqref{eq: x_T^*=UU^TAy} are equivalent. We multiply both sides of $\mvec{x}^*_{\top}-\mvec{p}_{\top} = \mtx{U}\mtx{U}^T \mtx{A}\mvec{y}^*$ by $\mtx{S}^T$, and obtain
	\begin{equation} \label{eq: Qx^* = QUU^TAy^*}
	\begin{pmatrix}
	\mtx{U}^T \\
	\mtx{V}^T
	\end{pmatrix} (\mvec{x}_{\top}^*-\mvec{p}_{\top}) = \begin{pmatrix}
	\mtx{U}^T \\
	\mtx{V}^T
	\end{pmatrix} \mtx{U}\mvec{U}^T\mtx{A}\mvec{y}^*.
	\end{equation}
	Since $\mvec{x}_{\top}^*-\mvec{p}_{\top}$ is in the column span of $\mtx{U}$, we can write $\mvec{x}_{\top}^* - \mvec{p}_{\top} = \mtx{U}\mvec{z}^*$
	for some (unique) vector $\mvec{z}^* \in \mathbb{R}^{d_e}$. By substituting the above into \eqref{eq: Qx^* = QUU^TAy^*} we obtain
	$$ \begin{pmatrix}
	\mtx{U}^T\mtx{U}\mvec{z}^* \\
	\mtx{V}^T \mtx{U} \mvec{z}^*
	\end{pmatrix} = \begin{pmatrix}
	\mtx{U}^T\mtx{U} \mtx{U}^T\mtx{A}\mvec{y}^* \\
	\mtx{V}^T \mtx{U} \mtx{U}^T \mtx{A} \mvec{y}^*
	\end{pmatrix}.$$
	This reduces to 
	$$\begin{pmatrix}
	\mvec{z}^* \\
	\mvec{0}
	\end{pmatrix} = \begin{pmatrix}
	\mtx{U}^T\mtx{A}\mvec{y}^* \\
	\mtx{0}
	\end{pmatrix},$$
	where we have used the identities $\mtx{U}^T\mtx{U} = \mtx{I}$ and $\mtx{V}^T\mtx{U} = \mvec{0}$, which follow from \Cref{ass:AREGO_fun_eff_dim}.
	To obtain \eqref{eq: x_T^*=UU^TAy} from \eqref{eq: By=z}, multiply \eqref{eq: By=z} by $\mtx{U}$.

\section{Derivation of the probability density function of \texorpdfstring{$\mvec{w}$}{w} }
\label{app:pdf_w}
We derive the probability density function of the random vector\footnote{For the vector $\mvec{w}$ to be well-defined, we require $d_e<D$ (see \Cref{ass:AREGO_fun_eff_dim}). If $d_e=D$, then $d = D$; letting $\mtx{Q} = \mtx{I}$ and using $\mvec{z}^* = \mvec{x}^* - \mvec{p}$ in \eqref{eq:prob_there_exists_y}--\eqref{eq:prob_Ay^2*_is_between_-1_and_1}, it is straightforward to see that $\prob[\text{\eqref{eq: AREGO} is successful}] = 1$.} $\mvec{w}$ defined in \eqref{w-def} 
following a similar line of argument as in \cite{Cartis2020}: we first derive the distribution of $\| \mvec{w} \|_2^2$ and then show that $\mvec{w}$ follows a spherical distribution, which then allows us to derive the exact distribution of $\mvec{w}$.
\begin{theorem}[Distribution of $\|\mvec{w}\|_2^2$] \label{thm:dist_of_norm_of_w}
Suppose that \Cref{ass:AREGO_fun_eff_dim} holds. Let $\mvec{x}^*$ be a(ny) global minimizer of \eqref{eq: GO}, $\mvec{p}\in \mathcal{X}$, a given vector, and $\mtx{A}$, a $D \times d$ Gaussian matrix. Assume that $\mvec{p}_\top \neq \mvec{x}_\top^*$, where the subscript represents the Euclidean projection on the effective subspace.  Let $\mvec{w}$ be defined in \eqref{w-def}.
Then, 
$$ \left(\frac{1}{\| \mvec{x}_{\top}^* - \mvec{p}_{\top}\|^2_2} \cdot \frac{d-d_e+1}{D-d_e}\right) \|\mvec{w}\|_2^2 \sim  F(D-d_e,d-d_e+1),$$
where $F(v_1,v_2)$ denotes the $F$-distribution with degrees of freedom $v_1$ and $v_2$.  
\end{theorem}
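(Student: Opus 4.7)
The plan is to exploit the decomposition $\mtx{A} = \mtx{U}\mtx{B} + \mtx{V}\mtx{C}$ (where $\mtx{B}= \mtx{U}^T\mtx{A}$ and $\mtx{C} := \mtx{V}^T\mtx{A}$) and, crucially, the fact that $\mtx{B}$ and $\mtx{C}$ are \emph{independent} Gaussian matrices. By \Cref{thm: orthog_inv_of_Gaussian_matrices}, $\mtx{C}$ is a $(D-d_e)\times d$ Gaussian matrix, and by \Cref{thm:indep_of_Gaussian_matrices}, since $\mtx{U}^T\mtx{V} = \mtx{0}$, the matrices $\mtx{B}$ and $\mtx{C}$ are independent. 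Using \eqref{eq: Euclidean_norm_sol_explicit_formula}, I rewrite
\[
\mvec{w} = \mtx{V}^T \mtx{A}\mvec{y}_2^* = \mtx{C}\,\mtx{B}^T(\mtx{B}\mtx{B}^T)^{-1}\mvec{z}^* = \mtx{C}\mvec{y}_2^*,
\]
so that $\mvec{w}$ is an explicit linear image of $\mtx{C}$, with the multiplier $\mvec{y}_2^*$ depending only on $\mtx{B}$ (and the deterministic quantity $\mvec{z}^*$).

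Next, I would condition on $\mtx{B}$ (equivalently, on $\mvec{y}_2^*$). Because $\mtx{C}$ is Gaussian and independent of $\mtx{B}$, the conditional law of $\mvec{w}=\mtx{C}\mvec{y}_2^*$ given $\mvec{y}_2^*$ is that of a Gaussian vector with independent $\mathcal{N}(0,\|\mvec{y}_2^*\|_2^2)$ entries, i.e.\
\[
\mvec{w}\,\big|\,\mvec{y}_2^*\;\sim\;\mathcal{N}\!\bigl(\mvec{0},\,\|\mvec{y}_2^*\|_2^2\,\mtx{I}_{D-d_e}\bigr).
\]
Consequently, conditionally on $\mvec{y}_2^*$ (and hence unconditionally, since the resulting distribution does not depend on $\mvec{y}_2^*$), the quantity $\|\mvec{w}\|_2^2/\|\mvec{y}_2^*\|_2^2$ is a sum of $D-d_e$ independent squared standard normals; that is,
\[
W_1 := \frac{\|\mvec{w}\|_2^2}{\|\mvec{y}_2^*\|_2^2}\;\sim\;\chi^2_{D-d_e},
\]
and $W_1$ is independent of $\mvec{y}_2^*$, therefore also independent of $\mtx{B}$.

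Finally, I would combine this with \Cref{thm: x^*_T/y^*_2_follow_chi_square}, which provides
\[
W_2 := \frac{\|\mvec{x}_{\top}^*-\mvec{p}_{\top}\|_2^2}{\|\mvec{y}_2^*\|_2^2}\;\sim\;\chi^2_{d-d_e+1}.
\]
Since $W_2$ is a function of $\mtx{B}$ alone while $W_1$ is independent of $\mtx{B}$, the variables $W_1$ and $W_2$ are independent. Taking the ratio cancels $\|\mvec{y}_2^*\|_2^2$:
\[
\frac{\|\mvec{w}\|_2^2}{\|\mvec{x}_{\top}^*-\mvec{p}_{\top}\|_2^2}=\frac{W_1}{W_2},
\]
so by \Cref{def:F-distribution} applied with $\nu_1 = D-d_e$ and $\nu_2 = d-d_e+1$,
\[
\frac{1}{\|\mvec{x}_{\top}^*-\mvec{p}_{\top}\|_2^2}\cdot\frac{d-d_e+1}{D-d_e}\,\|\mvec{w}\|_2^2=\frac{W_1/(D-d_e)}{W_2/(d-d_e+1)}\sim F(D-d_e,\,d-d_e+1),
\]
as required.

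The only subtlety is the independence argument at the end: one must be careful to verify that $W_1$ is independent of $\mvec{y}_2^*$ (not merely conditionally chi-squared distributed). This follows since the conditional law of $W_1$ given $\mvec{y}_2^*$ does not depend on $\mvec{y}_2^*$, so $W_1\perp\!\!\!\perp\mvec{y}_2^*$ and hence $W_1\perp\!\!\!\perp\mtx{B}$; since $W_2$ is $\sigma(\mtx{B})$-measurable, the independence of $W_1$ and $W_2$ follows. Everything else reduces to direct computation using the results of \Cref{sec: charact_prelim} and standard properties of Gaussian matrices from the appendix.
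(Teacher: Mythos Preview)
Your proof is correct and follows essentially the same route as the paper's: write $\mvec{w}=\mtx{C}\mvec{y}_2^*$ with $\mtx{C}=\mtx{V}^T\mtx{A}$ independent of $\mtx{B}=\mtx{U}^T\mtx{A}$, deduce $\|\mvec{w}\|_2^2/\|\mvec{y}_2^*\|_2^2\sim\chi^2_{D-d_e}$ independently of $\mvec{y}_2^*$, and combine with \Cref{thm: x^*_T/y^*_2_follow_chi_square} to get the $F$-ratio. The only cosmetic differences are that the paper invokes \Cref{thm:Rayleigh_quotient} directly (where you unroll its proof via conditioning) and explicitly checks $\prob[\mvec{y}_2^*\neq\mvec{0}]=1$; also note your implication ``$W_1\perp\!\!\!\perp\mvec{y}_2^*$ hence $W_1\perp\!\!\!\perp\mtx{B}$'' is not valid as stated, but since $W_2$ is a function of $\mvec{y}_2^*$, the needed independence $W_1\perp\!\!\!\perp W_2$ already follows.
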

\begin{proof}
We write $\mvec{w}$ as $\mtx{C}\mvec{y}_2^*$, where $\mtx{C} = \mtx{V}^T\mtx{A}$. We first establish three facts: a) $\mtx{B}$ and $\mtx{C}$ are independent; b) $\mvec{y}_2^*$ and $\mtx{C}$ are independent; c) $\prob[\mvec{y}_2^* \neq \mvec{0}] = 1$. 

\begin{enumerate}
    \item[a)] Since $\mtx{V}$ is orthonormal, \Cref{thm: orthog_inv_of_Gaussian_matrices} implies that $\mtx{C}$ is a Gaussian matrix. Moreover, the fact $\mtx{U}^T\mtx{V} = \mtx{0}$ implies that $\mtx{B}$ and $\mtx{C}$ are independent, see \Cref{thm:indep_of_Gaussian_matrices}.
    \item[b)] Since $\mvec{y}_2^*$ is  measurable as a function of $\mtx{B}$
    (see proof \cite[Lemma A.16]{Cartis2020}), $\mvec{y}_2^*$ and $\mtx{C}$ must be independent. 
    \item[c)] We have $\prob[\mvec{y}_2^* \neq \mvec{0}] = 1 - \prob[\mvec{y}_2^* = \mvec{0}] = 1 - \prob[\| \mvec{y}_2^* \|_2^2 = 0] = 1-0,$ where the last equality is due to the fact that $\|\mvec{y}_2^*\|_2^2$ follows the (appropriately scaled) inverse chi-squared distribution (\Cref{thm: x^*_T/y^*_2_follow_chi_square}), which is a continuous distribution. 
\end{enumerate}
Now, we apply \Cref{thm:Rayleigh_quotient} to obtain
\begin{equation}
    \frac{\|\mvec{w}\|_2^2}{\| \mvec{y}_2^* \|_2^2} = \frac{(\mvec{y}_2^*)^T \mtx{C}^T \mtx{C} \mvec{y}_2^*}{\| \mvec{y}_2^* \|_2^2} \sim \chi^2_{D-d_e},
\end{equation}
which together with \Cref{thm: x^*_T/y^*_2_follow_chi_square} yields
\begin{equation}
    \frac{\|\mvec{w}\|_2^2}{\|\mvec{x}_{\top}^* - \mvec{p}_{\top}\|_2^2}  \sim \frac{\chi^2_{D-d_e}}{\chi^2_{d-d_e+1}},
\end{equation}
where $\chi^2_{D-d_e}$ and $\chi^2_{d-d_e+1}$ are independent\footnote{\Cref{thm:Rayleigh_quotient} implies that $\mvec{y}_2^* (= \| \mvec{x}_{\top}^*-\mvec{p}_{\top} \|/\chi^2_{d-d_e+1} )$ and $\chi^2_{D-d_e}$ are independent; hence, $\chi^2_{D-d_e}$ and $\chi^2_{d-d_e+1}$ must also be independent.}. Using the definition of the $F$-distribution (see \Cref{def:F-distribution}), we obtain the desired result. 
\end{proof}

Using \Cref{thm:dist_of_norm_of_w}, it is straightforward to derive the p.d.f of $\|\mvec{w}\|$.
\begin{theorem}[The p.d.f. of $\|\mvec{w}\|$]
Suppose that \Cref{ass:AREGO_fun_eff_dim} holds. Let $\mvec{x}^*$ be a(ny) global minimizer of \eqref{eq: GO}, $\mvec{p}\in \mathcal{X}$, a given vector, and $\mtx{A}$, a $D \times d$ Gaussian matrix. Assume that $\mvec{p}_\top \neq \mvec{x}_\top^*$, where the subscript represents the Euclidean projection on the effective subspace.
The p.d.f. $h(\hat{w})$ of $\|\mvec{w}\|$, with $\mvec{w}$ defined in \eqref{w-def}, is given by
\begin{equation}\label{eq:pdf_of_norm_of_w}
    h(\hat{w}) = \frac{2\Gamma(\frac{m+n}{2})}{\Gamma(\frac{m}{2})\Gamma(\frac{n}{2})} \cdot \frac{\hat{w}^{m-1}}{\|\mvec{x}_{\top}^* - \mvec{p}_{\top}\|^m} \left(1+\frac{\hat{w}^2}{\|\mvec{x}_{\top}^* - \mvec{p}_{\top}\|^2}\right)^{-(m+n)/2},
\end{equation}
where $m = D-d_e$, $n=d-d_e+1$, and where $\Gamma$ is the usual gamma function.
\end{theorem}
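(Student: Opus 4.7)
The plan is to derive the density of $\|\mvec{w}\|$ by a univariate change of variables from the $F$-distribution result already established in the preceding theorem. Setting $m = D - d_e$ and $n = d - d_e + 1$, \Cref{thm:dist_of_norm_of_w} states that
$$ X := \frac{n\,\|\mvec{w}\|^2}{m\,\|\mvec{x}_\top^* - \mvec{p}_\top\|^2} \sim F(m,n),$$
with corresponding p.d.f.~$f_X(x)$ given explicitly by \eqref{eq:pdf_of_F} in \Cref{def:F-distribution}. So there is nothing distributional left to prove; only a deterministic change of variables from $X$ to $W := \|\mvec{w}\|$ remains.

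Since $W \geq 0$, I would apply the change-of-variables formula on $(0,\infty)$. The map $\hat{w} \mapsto \frac{n\hat{w}^2}{m\|\mvec{x}_\top^* - \mvec{p}_\top\|^2}$ is smooth and strictly increasing on $(0,\infty)$, with Jacobian $\frac{dx}{d\hat{w}} = \frac{2n\hat{w}}{m\|\mvec{x}_\top^* - \mvec{p}_\top\|^2}$; restriction to $(0,\infty)$ is harmless because the continuous distribution of $\|\mvec{w}\|^2$ gives $W>0$ almost surely. The induced density is therefore
$$ h(\hat{w}) = f_X\!\left(\frac{n\hat{w}^2}{m\|\mvec{x}_\top^* - \mvec{p}_\top\|^2}\right) \cdot \frac{2n\hat{w}}{m\|\mvec{x}_\top^* - \mvec{p}_\top\|^2},\qquad \hat{w} > 0.$$

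The rest is routine algebraic bookkeeping: substituting the argument into $f_X$ turns $x^{m/2-1}$ into $\hat{w}^{m-2}/\|\mvec{x}_\top^* - \mvec{p}_\top\|^{m-2}$ up to the constant $(n/m)^{m/2-1}$; the Jacobian supplies an extra factor $\hat{w}/\|\mvec{x}_\top^* - \mvec{p}_\top\|^2$; and the bracket $1 + (m/n)x$ collapses to $1 + \hat{w}^2/\|\mvec{x}_\top^* - \mvec{p}_\top\|^2$. Combining the prefactors $(m/n)^{m/2}$ from $f_X$, $(n/m)^{m/2-1}$ from the substitution, and $2n/m$ from the Jacobian telescopes to the constant $2$, producing exactly the prefactor $2\Gamma(\tfrac{m+n}{2})/[\Gamma(\tfrac{m}{2})\Gamma(\tfrac{n}{2})]$ and the claimed form \eqref{eq:pdf_of_norm_of_w}.

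There is no real obstacle: the conceptual work (independence of the Gaussian blocks $\mtx{B}$ and $\mtx{C}$, the chi-squared quotient structure yielding the $F$-distribution) has been handled in \Cref{thm:dist_of_norm_of_w}. The only care needed in the present statement is to track constants carefully during the substitution and verify injectivity of the transformation $\hat{w} \mapsto x$ so that the standard one-dimensional change-of-variables identity applies without a Jacobian sign issue.
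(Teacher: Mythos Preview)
Your proposal is correct and essentially identical to the paper's own argument: both start from \Cref{thm:dist_of_norm_of_w} and perform a one-dimensional change of variables from the $F(m,n)$-distributed $X$ to $\|\mvec{w}\|$, then substitute \eqref{eq:pdf_of_F}. The only cosmetic difference is that the paper phrases the transformation via differentiating the c.d.f., $h(\hat{w}) = \frac{2\hat{w}}{K^2} f_X(\hat{w}^2/K^2)$ with $K^2 = m\|\mvec{x}_\top^* - \mvec{p}_\top\|^2/n$, whereas you invoke the Jacobian change-of-variables formula directly; the resulting algebra is the same.
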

\begin{proof}
Let $X \sim F(D-d_e, d-d_e+1)$. \autoref{thm:dist_of_norm_of_w} implies that
\begin{equation}
    \|\mvec{w}\| \stackrel{law}{=} K \sqrt{X},
\end{equation}
where 
\begin{equation}
    K = \| \mvec{x}_{\top}^* - \mvec{p}_{\top} \| \sqrt{\frac{D-d_e}{d-d_e+1}}.
\end{equation}
For the p.d.f. of $\|\mvec{w}\|$, we have
\begin{equation} \label{eq:g(w)=2wf/K^2}
    h(\hat{w}) = \frac{d}{d\hat{w}} \prob[\|\mvec{w}\| < \hat{w}] = \frac{d}{d\hat{w}} \prob[K\sqrt{X} < \hat{w}] = \frac{d}{d\hat{w}} \prob[X < \hat{w}^2/K^2] = \frac{2\hat{w}}{K^2} f(\hat{w}^2/K^2),
\end{equation}
where $f(x)$ denotes the p.d.f of an $F$-distributed random variable with degrees of freedom $m = D-d_e$ and $n = d-d_e+1$.
By substituting \eqref{eq:pdf_of_F} in \eqref{eq:g(w)=2wf/K^2}, we obtain the desired result.
\end{proof}
To derive the p.d.f.~of $\mvec{w}$ we  rely on the fact that $\mvec{w}$ has a spherical distribution (see \Cref{def: spherical_distribution}), as we show next.
\begin{theorem}[$\mvec{w}$ has a spherical distribution] \label{thm:w_has_a_spherical_distr}
Suppose that \Cref{ass:AREGO_fun_eff_dim} holds. Let $\mvec{x}^*$ be a(ny) global minimizer of \eqref{eq: GO}, $\mvec{p}\in \mathcal{X}$, a given vector, and $\mtx{A}$, a $D \times d$ Gaussian matrix. Assume that $\mvec{p}_\top \neq \mvec{x}_\top^*$, where the subscript represents the Euclidean projection on the effective subspace. The random vector $\mvec{w}$, defined in \eqref{w-def}, has a spherical distribution.
\end{theorem}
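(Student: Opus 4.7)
The plan is to exploit the structural decomposition $\mvec{w} = \mtx{C}\mvec{y}_2^*$ already used in the proof of \Cref{thm:dist_of_norm_of_w}, where $\mtx{C} := \mtx{V}^T \mtx{A}$. Two facts have already been established there and can be reused: (a) $\mtx{C}$ is a $(D-d_e)\times d$ Gaussian matrix (by \Cref{thm: orthog_inv_of_Gaussian_matrices}, because $\mtx{V}$ has orthonormal columns), and (b) $\mtx{C}$ is independent of $\mvec{y}_2^*$ (because $\mvec{y}_2^*$ is measurable with respect to $\mtx{B}=\mtx{U}^T\mtx{A}$ via \eqref{eq: Euclidean_norm_sol_explicit_formula}, and $\mtx{B},\mtx{C}$ are independent by \Cref{thm:indep_of_Gaussian_matrices} since $\mtx{U}^T\mtx{V} = \mtx{0}$).

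Given any orthogonal $(D-d_e)\times(D-d_e)$ matrix $\mtx{O}$, the goal is to show $\mtx{O}\mvec{w} \stackrel{law}{=} \mvec{w}$, which is equivalent to
\begin{equation*}
\mtx{O}\mtx{C}\mvec{y}_2^* \stackrel{law}{=} \mtx{C}\mvec{y}_2^*.
\end{equation*}
By \Cref{thm: orthog_inv_of_Gaussian_matrices} applied with the orthogonal matrix $\mtx{O}$ on the left and the identity on the right, $\mtx{O}\mtx{C}$ is again a $(D-d_e)\times d$ Gaussian matrix, so $\mtx{O}\mtx{C} \stackrel{law}{=} \mtx{C}$. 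Moreover, since $\mtx{O}\mtx{C}$ is a (deterministic measurable) function of $\mtx{C}$ alone, and $\mtx{C}$ is independent of $\mvec{y}_2^*$, the pair $(\mtx{O}\mtx{C},\mvec{y}_2^*)$ has independent components; the same is true of $(\mtx{C},\mvec{y}_2^*)$. Matching marginals plus independence forces the joint distributions to coincide, i.e.,
\begin{equation*}
(\mtx{O}\mtx{C},\mvec{y}_2^*) \stackrel{law}{=} (\mtx{C},\mvec{y}_2^*).
\end{equation*}

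Finally, viewing each matrix as a random vector (by vectorization) and applying \Cref{lemma: Fang_equally_distributed_random_vectors} to the measurable map $(\mtx{M},\mvec{z}) \mapsto \mtx{M}\mvec{z}$ yields $\mtx{O}\mtx{C}\mvec{y}_2^* \stackrel{law}{=} \mtx{C}\mvec{y}_2^*$, which is the desired sphericity (\Cref{def: spherical_distribution}). The only mildly delicate step is arguing the joint-law equality $(\mtx{O}\mtx{C},\mvec{y}_2^*)\stackrel{law}{=}(\mtx{C},\mvec{y}_2^*)$ rigorously from the two marginal facts plus independence, but this is standard once independence on both sides is invoked. Once this is in place, combining the resulting spherical distribution with the p.d.f.~$h(\hat{w})$ of $\|\mvec{w}\|$ derived in \eqref{eq:pdf_of_norm_of_w} and \Cref{thm: pdf_of_x_Gupta} will yield the explicit multivariate $t$-density of $\mvec{w}$ claimed in \Cref{thm:pdf_of_w}.
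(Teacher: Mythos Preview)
Your proposal is correct and follows essentially the same route as the paper's proof: both arguments exploit the factorization $\mvec{w}=\mtx{C}\mvec{y}_2^*$, use the orthogonal invariance $\mtx{O}\mtx{C}\stackrel{law}{=}\mtx{C}$ of the Gaussian block together with the independence of $\mtx{C}$ from the $\mtx{B}$-measurable quantity $\mvec{y}_2^*$, upgrade this to a joint-law equality, and then apply \Cref{lemma: Fang_equally_distributed_random_vectors}. The only cosmetic difference is that the paper packages the pair as $(\mtx{C},\mtx{B})$ and writes the map explicitly as a vector of rational functions in the entries (checking measurability via $\prob[\det(\mtx{B}\mtx{B}^T)=0]=0$), whereas you work directly with the pair $(\mtx{C},\mvec{y}_2^*)$ and the bilinear map $(\mtx{M},\mvec{z})\mapsto\mtx{M}\mvec{z}$; your packaging is slightly cleaner but the substance is the same.
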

\begin{proof}
Our proof is similar to the proof of Lemma A.16 in \cite{Cartis2020}.
Let $\mtx{S}$ be any $(D-d_e) \times (D-d_e)$ orthogonal matrix.
To prove that $\mvec{w}$ has a spherical distribution, we need to show that
\begin{equation}
    \mvec{w} \stackrel{law}{=} \mtx{S}\mvec{w}.
\end{equation}
Using \eqref{eq: Euclidean_norm_sol_explicit_formula}, we write $\mvec{w} = \mtx{C}\mtx{B}^T(\mtx{B}\mtx{B}^T)^{-1}\mvec{z}^*$, where $\mtx{C} = \mtx{V}^T\mtx{A}$ and $\mtx{B}=\mtx{U}^T\mtx{A}$ are Gaussian matrices independent of one another by the point a) of the proof of \Cref{thm:dist_of_norm_of_w}. Let $f : \mathbb{R}^{Dd \times 1} \rightarrow \mathbb{R}^{(D-d_e) \times 1} $ be a vector-valued function defined as
\begin{equation}
  f(\vc[\mtx{C}^T \; \mtx{B}^T]) = \mtx{C}\mtx{B}^T(\mtx{B}\mtx{B}^T)^{-1}\mvec{z}^*,
\end{equation}
where $\vc[\mtx{C}^T \; \mtx{B}^T]$ denotes the vector of the concatenated columns of $(\mtx{C}^T \; \mtx{B}^T)$. We can express $f$ as
$$ f(\vc[\mtx{C}^T \; \mtx{B}^T]) = \begin{pmatrix}
	\frac{p_1(\mtx{C},\mtx{B})}{q(\mtx{B})} & \frac{p_2(\mtx{C},\mtx{B})}{q(\mtx{B})} & \dots & \frac{p_{D-d_e}(\mtx{C},\mtx{B})}{q(\mtx{B})} 
	\end{pmatrix}^T, $$
where $p_i(\mtx{C},\mtx{B})$ for $1 \leq i \leq D-d_e$ are some polynomials in the entries of $\mtx{C}$ and $\mtx{B}$ and $q(\mtx{B}) = \det(\mtx{B}\mtx{B}^T)$. 
Since $q$ and $p_i$'s are polynomials in Gaussian random variables, they are all measurable. Furthermore, since $\mtx{B}$ is Gaussian, by \autoref{thm: Wishart_is_nonsingular}, $\prob[q = 0] = 0$; this implies that $p_i/q$ is a measurable function for each $i = 1,2,\dots,D-d_e$ (see \cite[Theorem 4.10]{Wheeden2015}). 

We have
\begin{equation}
    \text{$\mvec{w} = f(\vc[\mtx{C}^T \; \mtx{B}^T])$ and $\mtx{S}\mvec{w} = f(\vc[(\mtx{S}\mtx{C})^T \; \mtx{B}^T])$}.
\end{equation}
From \Cref{thm: orthog_inv_of_Gaussian_matrices} it follows that $\mtx{C} \stackrel{law}{=} \mtx{S}\mtx{C}$; hence $\vc[\mtx{C}^T \; \mtx{B}^T] \stackrel{law}{=} \vc[(\mtx{S}\mtx{C})^T \; \mtx{B}^T]$. We can now apply \Cref{lemma: Fang_equally_distributed_random_vectors} to conclude that
\begin{equation}
    \mvec{w} = f(\vc[\mtx{C}^T \; \mtx{B}^T]) \stackrel{law}{=} f(\vc[(\mtx{S}\mtx{C})^T \; \mtx{B}^T]) = \mtx{S}\mvec{w}. \qedhere
\end{equation} 
\end{proof}

We are now ready to derive the p.d.f. of $\mvec{w}$, and hence prove \Cref{thm:pdf_of_w}.

\paragraph*{Proof of \Cref{thm:pdf_of_w}:} 
We show that the p.d.f. of $\mvec{w}$ is given by \eqref{eq:pdf_of_w}. The identification with the $t$-distribution follows from \eqref{eq:pdf_of_t_distr}. Let us first show that $\prob[\mvec{w} = \mvec{0}] = 0$. Let $X \sim F(D-d_e,d-d_e+1)$. We have
    \begin{equation}\label{eq:prob[w=0]=0}
        \prob[\mvec{w} = \mvec{0}] = \prob[\|\mvec{w}\|^2 = 0] = \prob[X = 0],
    \end{equation}
    where in the last equality we applied \autoref{thm:dist_of_norm_of_w}. Since the $F$-distributed $X$ is a continuous random variable, the last probability in \eqref{eq:prob[w=0]=0} is equal to zero.
    
    Since $\prob[\mvec{w} = \mvec{0}] = 0$ and $\mvec{w}$ has a spherical distribution (\autoref{thm:w_has_a_spherical_distr}), \autoref{thm: pdf_of_x_Gupta} implies that the p.d.f. $g(\mvec{\bar{w}})$ of $\mvec{w}$ satisfies
    \begin{equation}\label{eq:g=pdf_Gupta}
        g(\mvec{\bar{w}}) = \frac{\Gamma(m/2)}{2\pi^{m/2}} h(\| \mvec{\bar{w}} \|) \| \mvec{\bar{w}} \|^{1-m},
    \end{equation}
    where $h(\cdot)$ denotes the p.d.f.~of $\|\mvec{w}\|$. By substituting \eqref{eq:pdf_of_norm_of_w} into \eqref{eq:g=pdf_Gupta}, we obtain the desired result.\hfill$\Box$

\section{Proof of \texorpdfstring{\Cref{cor:prob[RPXis_succ]=Omega_general_p}}{Theorem 3.8} and \texorpdfstring{\Cref{cor:prob[RPXis_succ]=Omega_p=0}}{Theorem 3.9}}
\label{app:proofs_of_corollaries}

A crucial Lemma is given first. 
\begin{lemma} \label{lemma:prob[success]>I}
In the conditions of \Cref{cor:prob[RPXis_succ]=Omega_general_p},  we have
\begin{equation} \label{eq:prob[success]>I}
    \prob[\text{\eqref{eq: AREGO} is successful}\,] \geq I(\mvec{p},\Delta),
\end{equation}
where $\Delta :=  \|\mvec{x}_{\top}^* - \mvec{p}_{\top}\|$ and
\begin{equation}\label{def:I(p,Delta)}
    I(\mvec{p},\Delta) := \frac{1}{2^{\frac{n}{2}-1}\Gamma(\frac{n}{2})} \int_0^{\infty} \left(\prod_{i=d_e+1}^D \frac{1}{\sqrt{2\pi}} \int_{s(-1-p_i)/\Delta}^{s(1-p_i)/\Delta}e^{-x^2/2} dx \right) s^{n-1} e^{-s^2/2} ds.
\end{equation}
\end{lemma}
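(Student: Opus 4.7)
The plan is to start from \Cref{thm:RP_succ_special_case}, which (under the coordinate-aligned hypothesis on $\mtx{U}$ and $\mtx{V}$) reduces the task to a pure computation: lower-bound $\prob[\text{\eqref{eq: AREGO} is successful}]$ by the probability that the $(D-d_e)$-dimensional $t$-distributed vector $\mvec{w}$ lies in the axis-aligned box $B := \prod_{i=d_e+1}^D [-1-p_i,\,1-p_i]$, and then evaluate this box probability. The joint p.d.f.~in \eqref{eq:pdf_of_w} does not factor across coordinates, so a direct evaluation of $\prob[\mvec{w}\in B]$ is awkward; the key idea is to invoke the standard scale-mixture representation of the multivariate $t$-distribution, which restores a factorization after conditioning.

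First I would exhibit the representation $\mvec{w} \stackrel{law}{=} \Delta\,\mvec{Z}/\sqrt{V}$, where $\mvec{Z} \sim \mathcal{N}(\mvec{0},\mtx{I}_{m})$ and $V \sim \chi^2_n$ (with $m = D-d_e$, $n = d-d_e+1$) are independent. This can be checked by computing the conditional density
\[
f_{\Delta\mvec{Z}/\sqrt{V}\,|\,V}(\bar{\mvec{w}}\,|\,v) = \left(\frac{v}{2\pi\Delta^2}\right)^{m/2}\exp\!\left(-\frac{v\,\|\bar{\mvec{w}}\|^2}{2\Delta^2}\right),
\]
multiplying it by the $\chi^2_n$ density $v^{n/2-1}e^{-v/2}/(2^{n/2}\Gamma(n/2))$, and integrating in $v$ via the identity $\int_0^\infty v^{k-1}e^{-av}\,dv=\Gamma(k)a^{-k}$; the resulting marginal density coincides with \eqref{eq:pdf_of_w}, confirming the representation.

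Once this representation is in hand, conditioning on $V$ turns the components of $\mvec{w}$ into conditionally independent $\mathcal{N}(0,\Delta^2/V)$ variables, whence by the tower property
\[
\prob[\mvec{w}\in B] = \expv_V\!\left[\prod_{i=d_e+1}^D \prob\!\left[-1-p_i \leq w_{i-d_e} \leq 1-p_i\,\Big|\,V\right]\right],
\]
and each inner probability equals $\frac{1}{\sqrt{2\pi}}\int_{(-1-p_i)\sqrt{V}/\Delta}^{(1-p_i)\sqrt{V}/\Delta} e^{-x^2/2}\,dx$. Finally, writing the outer expectation as an integral against the $\chi^2_n$ density and substituting $v=s^2$ (so $dv=2s\,ds$) converts the mass element $v^{n/2-1}e^{-v/2}/(2^{n/2}\Gamma(n/2))\,dv$ into $s^{n-1}e^{-s^2/2}/(2^{n/2-1}\Gamma(n/2))\,ds$, yielding precisely the expression for $I(\mvec{p},\Delta)$ in \eqref{def:I(p,Delta)}.

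The main obstacle is verifying the scale-mixture representation cleanly (identifying the correct scaling $\Delta/\sqrt{V}$ rather than $\Delta\sqrt{n/V}$ is easy to get wrong, since different texts normalize the $t$-distribution differently); once this representation is established, the remainder of the argument is a routine combination of conditional independence across coordinates and a one-dimensional change of variable from $v$ to $s=\sqrt{v}$.
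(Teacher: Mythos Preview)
Your proposal is correct and follows essentially the same route as the paper: both start from \Cref{thm:RP_succ_special_case}, invoke the scale-mixture representation $\mvec{w}\stackrel{law}{=}\Delta\,\mvec{Z}/\sqrt{V}$ with $\mvec{Z}\sim\mathcal{N}(\mvec{0},\mtx{I}_m)$ and $V\sim\chi^2_n$ independent, condition on the chi-squared variable to factorize the box probability into a product of one-dimensional Gaussian integrals, and then integrate out the mixing variable. The only cosmetic differences are that the paper cites \cite[p.~133]{Gupta1999} for the representation (and \cite{Dunnett1955} for the resulting integral form) rather than verifying it via the density, and works directly with $s\sim\sqrt{\chi^2_n}$ instead of introducing $V$ and substituting $v=s^2$ at the end.
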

\begin{proof}
\Cref{thm:RP_succ_special_case} implies that
\[    \prob[\text{\eqref{eq: AREGO} is successful}\,] \geq \prob[-\mvec{1}-\mvec{p}_{d_e+1:D} \leq \mvec{w} \leq \mvec{1}-\mvec{p}_{d_e+1:D}],  \]
where $\mvec{w}$ follows a $(D-d_e)$-dimensional  $t$-distribution with parameters $n= d-d_e+1$ and $\mtx{\Sigma} = (\Delta^2/n)\mtx{I}$. According to \cite[p.~133]{Gupta1999}, 
\begin{equation}\label{eq:w=scaled_Gaussian}
    \mvec{w} \stackrel{law}{=} \frac{\Delta}{s} \begin{pmatrix} Z_1 \\ \vdots \\ Z_m \end{pmatrix},
\end{equation} 
with $s \sim \sqrt{\chi^2_{n}}$, $m = D-d_e$ and $Z_1, \dots, Z_m$  i.i.d standard Gaussian random variables.
Then, \eqref{eq:w=scaled_Gaussian} yields
\begin{multline}\label{eq:I(p,delta)=prob[-1-p<w<1-p]}
    \prob[\text{\eqref{eq: AREGO} is successful}\,] \geq \prob[-\mvec{1}-\mvec{p}_{d_e+1:D} \leq \mvec{w} \leq \mvec{1}-\mvec{p}_{d_e+1:D}] \\
    = \prob\left[\frac{s}{\Delta}(-1-p_{d_e+1}) \leq Z_1 \leq \frac{s}{\Delta}(1-p_{d_e+1}), \dots, \frac{s}{\Delta}(-1-p_D) \leq Z_m \leq \frac{s}{\Delta}(1-p_D)\right],
\end{multline} 
which can be written as (see \cite[p.~1]{Dunnett1955})
\begin{equation}\label{eq:I=int_of_Gh(s)}
   \int_{0}^{\infty} G(\mvec{p},\Delta,s) h(s) ds,
\end{equation}
where

\begin{equation}\label{def:G(s)}
\begin{aligned}
G(\mvec{p},\Delta,s) & = \int_{s(-1-p_{d_e+1})/\Delta}^{s(1-p_{d_e+1})/\Delta} \dots \int_{s(-1-p_D)/\Delta}^{s(1-p_D)/\Delta} \frac{1}{(2\pi)^{m/2}} e^{-\frac{1}{2}(x_1^2 + \cdots + x_m^2)} dx_1 \dots dx_m \\
& = \prod_{i=d_e+1}^D \frac{1}{\sqrt{2\pi}} \int_{s(-1-p_{i})/\Delta}^{s(1-p_{i})/\Delta}e^{-x^2/2} dx,
\end{aligned}
\end{equation}

\noindent and where $h(s)$ is the pdf of $s$ given by
\begin{equation}\label{def:h(s)}
    h(s) = \frac{1}{2^{\frac{n}{2}-1} \Gamma(\frac{n}{2})} s^{n-1} e^{-s^2/2}.
\end{equation}
By combining \eqref{eq:I(p,delta)=prob[-1-p<w<1-p]} -- \eqref{def:h(s)}, we obtain
\eqref{eq:prob[success]>I}--\eqref{def:I(p,Delta)}.
\end{proof}

It is easier to show \Cref{cor:prob[RPXis_succ]=Omega_p=0} first, when 
$\mvec{p} = \mvec{0}$.


\subsection{Proof of \texorpdfstring{\Cref{cor:prob[RPXis_succ]=Omega_p=0}}{Theorem 3.9} }

The next result is  a direct corollary of \Cref{lemma:prob[success]>I} when $\mvec{p} = \mvec{0}$,
allowing us to replace   $I(\mvec{p},\Delta)$ in \eqref{eq:prob[success]>I} with a new integral $J_{m,n}(\Delta)$ that will be easier to manipulate.
\begin{corollary} \label{cor:prob[success]>J}
In the conditions and notation of \Cref{lemma:prob[success]>I}, let $\mvec{p} = \mvec{0}$. Then
\begin{equation}
    \prob[\text{\eqref{eq: AREGO} is successful}\,] \geq J_{m,n}(\|\mvec{x}_{\top}^*\|),
\end{equation}
where 
\begin{equation}\label{eq:J(delta)}
J_{m,n}(\Delta) := \frac{1}{2^{\frac{n}{2}-1}\Gamma(\frac{n}{2})} \int_0^{\infty} \left(\sqrt{\frac{2}{\pi}} \int_{0}^{s/\Delta} e^{-x^2/2} dx \right)^m s^{n-1} e^{-s^2/2} ds.
\end{equation}
\end{corollary}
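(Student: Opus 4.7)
The plan is to observe that this corollary is essentially a direct algebraic specialization of \Cref{lemma:prob[success]>I} to the case $\mvec{p}=\mvec{0}$. I would simply substitute $\mvec{p}=\mvec{0}$ into the formula \eqref{def:I(p,Delta)} for $I(\mvec{p},\Delta)$ and simplify, matching the resulting expression with \eqref{eq:J(delta)}.

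More concretely, I would first invoke \Cref{lemma:prob[success]>I} to obtain $\prob[\text{\eqref{eq: AREGO} is successful}] \geq I(\mvec{0},\Delta)$. With $\mvec{p}=\mvec{0}$, the orthogonal projection $\mvec{p}_\top = \mtx{U}\mtx{U}^T\mvec{0} = \mvec{0}$, so that $\Delta = \|\mvec{x}_\top^*-\mvec{p}_\top\| = \|\mvec{x}_\top^*\|$. Moreover, for every $i \in \{d_e+1,\ldots,D\}$, the coordinate $p_i=0$, so each of the inner Gaussian integrals in \eqref{def:I(p,Delta)} has symmetric limits $[-s/\Delta,\,s/\Delta]$.

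Next, since the integrand $e^{-x^2/2}$ is an even function, for each such $i$,
\begin{equation*}
\frac{1}{\sqrt{2\pi}}\int_{-s/\Delta}^{s/\Delta} e^{-x^2/2}\,dx \;=\; \frac{2}{\sqrt{2\pi}}\int_{0}^{s/\Delta} e^{-x^2/2}\,dx \;=\; \sqrt{\frac{2}{\pi}}\int_{0}^{s/\Delta} e^{-x^2/2}\,dx.
\end{equation*}
All $m = D-d_e$ factors in the product over $i=d_e+1,\ldots,D$ are therefore identical, so the product equals $\bigl(\sqrt{2/\pi}\int_{0}^{s/\Delta} e^{-x^2/2}\,dx\bigr)^m$. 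Substituting this back into \eqref{def:I(p,Delta)} yields
\begin{equation*}
I(\mvec{0},\Delta) \;=\; \frac{1}{2^{\frac{n}{2}-1}\Gamma(\frac{n}{2})} \int_0^{\infty} \left(\sqrt{\frac{2}{\pi}} \int_{0}^{s/\Delta} e^{-x^2/2}\,dx \right)^{\!m} s^{n-1} e^{-s^2/2}\,ds \;=\; J_{m,n}(\Delta),
\end{equation*}
which, combined with the lower bound from \Cref{lemma:prob[success]>I} and $\Delta = \|\mvec{x}_\top^*\|$, gives the desired inequality. There is essentially no obstacle here: the only thing to verify carefully is that the hypotheses of \Cref{lemma:prob[success]>I} indeed hold at $\mvec{p}=\mvec{0}$, which in turn requires $\mvec{p}_\top \neq \mvec{x}_\top^*$, i.e., $\mvec{x}_\top^*\neq\mvec{0}$; if $\mvec{x}_\top^*=\mvec{0}$, then $f(\mvec{0}) = f(\mvec{x}_\top^*) = f^*$, so $\mvec{0}$ is itself a feasible global minimizer and \eqref{eq: AREGO} is trivially successful with $\mvec{y}=\mvec{0}$, so the bound holds vacuously.
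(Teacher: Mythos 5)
Your proposal is correct and follows essentially the same route as the paper: invoke \Cref{lemma:prob[success]>I} with $\mvec{p}=\mvec{0}$, use the evenness of $e^{-x^2/2}$ to fold the symmetric limits $[-s/\Delta,s/\Delta]$ into $2\int_0^{s/\Delta}$, and identify the result with $J_{m,n}(\|\mvec{x}_\top^*\|)$. Your remark on the degenerate case $\mvec{x}_\top^*=\mvec{0}$ (where the hypothesis $\mvec{p}_\top\neq\mvec{x}_\top^*$ of the lemma fails but the reduced problem is trivially successful) is a sensible addition that the paper handles only later, in the proof of the subsequent theorem.
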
 

\begin{proof}
Let $\mvec{p}=\mvec{0}$. Then \Cref{lemma:prob[success]>I} implies that 
$\prob[\text{\eqref{eq: AREGO} is successful}\,] \geq I(\mvec{0},\|\mvec{x}_{\top}^*\|),$ where 
\begin{equation}\label{eq:I(0,delta)}
\begin{aligned}
     I(\mvec{0},\|\mvec{x}_{\top}^*\|) & = \frac{1}{2^{\frac{n}{2}-1}\Gamma(\frac{n}{2})} \int_0^{\infty} \left(\prod_{i=d_e+1}^D \frac{1}{\sqrt{2\pi}} \int_{-s/\|\mvec{x}_{\top}^*\|}^{s/\|\mvec{x}_{\top}^*\|}e^{-x^2/2} dx \right) s^{n-1} e^{-s^2/2} ds \\
    & = \frac{1}{2^{\frac{n}{2}-1}\Gamma(\frac{n}{2})} \int_0^{\infty} \left(\sqrt{\frac{2}{\pi}} \int_{0}^{s/\|\mvec{x}_{\top}^*\|} e^{-x^2/2} dx \right)^m s^{n-1} e^{-s^2/2} ds \\
    & = J_{m,n}(\|\mvec{x}_{\top}^*\|). \\
\end{aligned}
\end{equation}
\end{proof}

We need to introduce the following three results on the integral $J_{m,n}(\Delta)$  in \eqref{eq:J(delta)}.
\begin{lemma} \label{lemma:J_monot_decreasing}
The integral $J_{m,n}(\Delta)$  in \eqref{eq:J(delta)} is a monotonically decreasing function of $\Delta$.
\end{lemma}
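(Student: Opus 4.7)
My plan is to exploit the fact that $\Delta$ enters the integrand of \eqref{eq:J(delta)} only through the upper limit $s/\Delta$ of the inner Gaussian integral. For each fixed $s>0$, the map $\Delta\mapsto s/\Delta$ is strictly decreasing on $(0,\infty)$, and since the integrand $e^{-x^2/2}$ is strictly positive on $[0,\infty)$, it follows immediately that
\[
\Phi(\Delta,s):=\int_0^{s/\Delta}e^{-x^2/2}dx
\]
is a strictly positive, strictly decreasing function of $\Delta$ for each fixed $s>0$. Since $m=D-d_e\geq 1$, the map $\Delta\mapsto\Phi(\Delta,s)^m$ inherits strict positivity and strict monotone decrease in $\Delta$. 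Multiplying by the $\Delta$-independent, non-negative weight $s^{n-1}e^{-s^2/2}$ (which is strictly positive on $(0,\infty)$) preserves pointwise monotonicity in $\Delta$.

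To lift this pointwise statement to a statement about the integral in $s$, I would invoke Leibniz's rule and differentiate under the integral sign. Formally,
\[
J_{m,n}'(\Delta) = -\,\frac{m\,(2/\pi)^{m/2}}{2^{n/2-1}\Gamma(n/2)}\int_0^\infty \Phi(\Delta,s)^{m-1}\,e^{-s^2/(2\Delta^2)}\,\frac{s^{n}}{\Delta^2}\,e^{-s^2/2}\,ds,
\]
where the integrand is strictly negative on $(0,\infty)$, which yields $J_{m,n}'(\Delta)<0$ and therefore strict monotonicity.

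The only non-trivial point is to justify the exchange of differentiation and integration. This is straightforward: on any compact subinterval $[\Delta_1,\Delta_2]\subset(0,\infty)$, one has $\Phi(\Delta,s)\leq\sqrt{\pi/2}$ uniformly, so the modulus of the derivative integrand is dominated by a constant multiple of $s^n e^{-s^2/2}$, which is integrable on $(0,\infty)$. Dominated convergence then legitimises Leibniz's rule, and the proof is complete. I do not anticipate any genuine obstacle here; the lemma is essentially a bookkeeping observation about which monotonicity carries through the nested integrals.
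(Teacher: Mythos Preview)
Your argument is correct and rests on the same core observation as the paper: for each fixed $s>0$, the inner integral $\Phi(\Delta,s)=\int_0^{s/\Delta}e^{-x^2/2}\,dx$ is decreasing in $\Delta$ because the integrand is positive and the upper limit $s/\Delta$ is decreasing. The paper stops there: given $\Delta_1\le\Delta_2$, the pointwise inequality $\Phi(\Delta_1,s)^m\ge\Phi(\Delta_2,s)^m$ for all $s\ge0$ immediately yields $J_{m,n}(\Delta_1)\ge J_{m,n}(\Delta_2)$ by monotonicity of the Lebesgue integral, with no differentiation needed.

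Your detour through Leibniz's rule is valid (your domination bound and the resulting sign of $J_{m,n}'(\Delta)$ are both correct), and it buys you \emph{strict} monotonicity rather than mere monotonicity, but it is not required for the lemma as stated. The ``lifting'' you describe does not need differentiation: if $f_1(s)\ge f_2(s)$ pointwise and both are integrable, then $\int f_1\ge\int f_2$. So the paper's two-line comparison is the more economical route to the same conclusion.
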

\begin{proof}
Let $\Delta_1, \Delta_2$ be any positive reals that satisfy $\Delta_1 \leq \Delta_2$. We need to show that $J_{m,n}(\Delta_1) \geq J_{m,n}(\Delta_2)$. This relation follows immediately from the observation that, for any $s \geq 0$,
$$ \sqrt{\frac{2}{\pi}} \int_{0}^{s/\Delta_1} e^{-x^2/2} dx \geq \sqrt{\frac{2}{\pi}} \int_{0}^{s/\Delta_2} e^{-x^2/2} dx $$
since the integrand is positive.
\end{proof}

\begin{lemma}\label{lemma:J<=1}
The integral $J_{m,n}(\Delta)$ defined in \eqref{eq:J(delta)} satisfies  $J_{m,n}(\Delta) \leq 1$ for all $\Delta > 0$.
\end{lemma}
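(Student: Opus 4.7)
The plan is to bound the inner factor in the definition \eqref{eq:J(delta)} by $1$, and then recognize the remaining integral as a known gamma integral that exactly cancels the normalizing constant.

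First, I would note that for any $s,\Delta>0$,
\begin{equation*}
\sqrt{\frac{2}{\pi}} \int_{0}^{s/\Delta} e^{-x^2/2}\, dx \;\le\; \sqrt{\frac{2}{\pi}} \int_{0}^{\infty} e^{-x^2/2}\, dx \;=\; 1,
\end{equation*}
since the integrand is nonnegative and the Gaussian normalization gives $\int_0^\infty e^{-x^2/2} dx = \sqrt{\pi/2}$. Raising this bound to the $m$th power and substituting into \eqref{eq:J(delta)} yields
\begin{equation*}
J_{m,n}(\Delta) \;\le\; \frac{1}{2^{\frac{n}{2}-1}\Gamma(\tfrac{n}{2})} \int_0^{\infty} s^{n-1} e^{-s^2/2}\, ds.
\end{equation*}

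Next, I would evaluate the remaining integral via the substitution $u = s^2/2$, giving
\begin{equation*}
\int_0^{\infty} s^{n-1} e^{-s^2/2}\, ds \;=\; 2^{\frac{n}{2}-1}\int_0^\infty u^{\frac{n}{2}-1} e^{-u}\, du \;=\; 2^{\frac{n}{2}-1}\Gamma\!\left(\tfrac{n}{2}\right),
\end{equation*}
which exactly cancels the prefactor, yielding $J_{m,n}(\Delta) \leq 1$. There is no real obstacle here: the only thing to watch is the bookkeeping of the constants in the substitution. Alternatively, and perhaps more elegantly, one can simply observe from the derivation in the proof of \Cref{lemma:prob[success]>I} (with $\mvec{p}=\mvec{0}$) that $J_{m,n}(\Delta)$ coincides with the probability $\prob[-\mvec{1} \le \mvec{w} \le \mvec{1}]$, where $\mvec{w}$ follows a multivariate $t$-distribution, and is therefore trivially at most $1$; this interpretation also explains why the prefactor must cancel the gamma integral.
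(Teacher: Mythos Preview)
Your proof is correct and essentially identical to the paper's: it too bounds the inner Gaussian integral by $1$ and then uses $\int_0^{\infty} s^{n-1} e^{-s^2/2}\, ds = 2^{n/2-1}\Gamma(n/2)$ to cancel the prefactor. Your added probabilistic remark (that $J_{m,n}(\Delta)$ is a probability via \Cref{lemma:prob[success]>I} with $\mvec{p}=\mvec{0}$) is a nice alternative justification that the paper does not spell out.
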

\begin{proof}
Note that, for any $s \geq 0$, we have
$$ \sqrt{\frac{2}{\pi}} \int_{0}^{s/\Delta} e^{-x^2/2} dx \leq \sqrt{\frac{2}{\pi}} \int_{0}^{\infty} e^{-x^2/2} dx = 1. $$
Hence,
$$ J_{m,n}(\Delta) \leq  \frac{1}{2^{\frac{n}{2}-1}\Gamma(\frac{n}{2})} \int_0^{\infty} s^{n-1} e^{-s^2/2} ds = 1. $$
\end{proof}

The following theorem provides an asymptotic expansion of $J_{m,n}(\Delta)$ for large $m$, that has algebraic dependence on $m$.
\begin{theorem}\label{thm:expansion_of_J(delta)}
Let $J_{m,n}(\Delta)$ be the integral defined in \eqref{eq:J(delta)}. Let $n$ and $\Delta$ be fixed and let $r = (n+\Delta^2-2)/2$. If $r \neq 0$ then, for large $m$,
\begin{multline}\label{eq:expansion_of_J}
    J_{m,n}(\Delta) =  \frac{C(n,\Delta)}{(m+1)^{\Delta^2}} \left( (\log (m+1))^r - \frac{r}{2} \log (\log (m+1)) \cdot (\log (m+1))^{r-1} \right.\\ \bigg. + O( (\log (m+1))^{r-1}) \bigg),
\end{multline}
where
\begin{equation*}
C(n,\Delta) = \pi^{\frac{\Delta^2}{2}}\Delta^n \frac{ \Gamma(\Delta^2)}{\Gamma(n/2)}.
\end{equation*}
If $r=0$, then $J_{m,n}(\Delta) = J_{m,1}(1) = 1/(m+1)$.
\end{theorem}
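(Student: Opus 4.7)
My plan is to treat $J_{m,n}(\Delta)$ as a Watson/Laplace-type integral in the large parameter $m$, where the factor $F(s)^m$, with $F(s) = \erf(s/(\Delta\sqrt{2}))$, plays the role of a sharp cutoff. Using the standard tail expansion
\[
1 - F(s) = \sqrt{2/\pi}\int_{s/\Delta}^{\infty}e^{-x^2/2}dx = \sqrt{2/\pi}\,\frac{\Delta}{s}\,e^{-s^2/(2\Delta^2)}\bigl(1+O(s^{-2})\bigr)
\]
as $s \to \infty$, one sees that $F(s)^m \approx \exp(-m(1-F(s)))$ transitions from $0$ to $1$ across a narrow window around a value $s_0(m)$ satisfying $m(1-F(s_0)) = O(1)$, which gives $s_0^2/(2\Delta^2)\sim \log m$. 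Hence the mass of the integral is carried by the regime $s \sim \Delta\sqrt{2\log m}$.

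The natural change of variable is $v = m(1-F(s))$, mapping $(0,\infty)$ to $(m,0)$. Using the tail expansion to re-express the Jacobian, one obtains $ds\sim -\Delta^2/(vs)\,dv$, and $F(s)^m = (1-v/m)^m \to e^{-v}$ for bounded $v$. Setting $u = s^2/(2\Delta^2)$, the defining relation $v = m(1-F(s))$ becomes asymptotically
\[
u + \tfrac{1}{2}\log u = \log m - \log v - \tfrac{1}{2}\log\pi + o(1),
\]
which inverts to $u = \log m - \tfrac{1}{2}\log\log m - \log v - \tfrac{1}{2}\log\pi + O(\log\log m/\log m)$. Substituting this into $s^{n-2}e^{-s^2/2} = (2\Delta^2 u)^{(n-2)/2}e^{-\Delta^2 u}$ and tracking the resulting powers of $\log m$, $v$ and $\pi$, the integrand reduces, to leading order, to a multiple of $e^{-v}v^{\Delta^2-1}\,dv$. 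Recognising $\int_0^{\infty}e^{-v}v^{\Delta^2-1}\,dv = \Gamma(\Delta^2)$ and simplifying $(2\Delta^2)^{(n-2)/2}\Delta^2/2^{n/2-1} = \Delta^n$ yields the leading order $C(n,\Delta)(\log m)^r m^{-\Delta^2}$, with the exact constant $C(n,\Delta) = \pi^{\Delta^2/2}\Delta^n\Gamma(\Delta^2)/\Gamma(n/2)$ announced in the theorem.

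To extract the second-order term $-\tfrac{r}{2}\log\log(m+1)\cdot(\log(m+1))^{r-1}$, the expansion of $u$ must be pushed one order further. Writing $u^{(n-2)/2} = (\log m)^{(n-2)/2}\bigl(1 - \tfrac{\log\log m}{2\log m} + O(1/\log m)\bigr)^{(n-2)/2}$ and combining with the $(\log m)^{\Delta^2/2}$ factor arising from $e^{-\Delta^2 u}$ gives $(\log m)^r - \tfrac{r}{2}\log\log m\cdot(\log m)^{r-1} + O((\log m)^{r-1})$; replacing $m$ by $m+1$ perturbs this only by $O(1/m)$. The degenerate case $r=0$ forces $n=1,\Delta=1$, in which the direct substitution $u=\erf(s/\sqrt{2})$ gives $du = \sqrt{2/\pi}\,e^{-s^2/2}ds$ and collapses $J_{m,1}(1)$ to $\int_0^1 u^m\,du = 1/(m+1)$, as claimed.

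The main obstacle is making precise the double-logarithmic expansion of $u$ and, more importantly, justifying the exchange of limit and integration uniformly in $v$: one must separately control the contribution from $v$ bounded (handled by dominated convergence), from $v$ large (where the Gaussian $e^{-s^2/2}$ factor dominates), and from $v$ near $m$ (corresponding to small $s$, negligible because $F(s)^m$ vanishes). The framework of Laplace-type expansions developed in \cite{Wong2001} and \cite{Temme2014} supplies the appropriate tools for these uniform estimates and for turning the informal asymptotic into a rigorous remainder of the announced order.
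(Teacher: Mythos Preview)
Your proposal is correct and takes a genuinely different route from the paper.

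The paper first rescales to $J_{m,n}(\Delta)=\frac{2\Delta^n}{\Gamma(n/2)}\int_0^\infty \erf(s)^m s^{n-1}e^{-\Delta^2 s^2}\,ds$ and then substitutes $t=-\log\erf(s)$, so that $\erf(s)^m$ becomes $e^{-mt}$ and (after absorbing the Jacobian) the integral is an honest Laplace transform $\int_0^\infty e^{Ks(t)^2}s(t)^{n-1}e^{-(m+1)t}\,dt$ with $K=1-\Delta^2$. The asymptotics of $s(t)^2$ near $t=0$ are obtained by inverting the complementary error function expansion (their Lemma on $s(t)^2=-\log t-\tfrac12\log(-\log t)-\log\sqrt\pi+\cdots$), the tail $\int_c^\infty$ is shown to be exponentially small, and the main contribution $\int_0^c$ is reduced to the model integrals $L(\lambda,\mu,z)=\int_0^c t^{\lambda-1}(-\log t)^\mu e^{-zt}\,dt$ and $G(\lambda,\mu,z)=\int_0^c t^{\lambda-1}(-\log t)^\mu\log(-\log t)\,e^{-zt}\,dt$, whose expansions are taken from (or proved following) Wong.

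Your substitution $v=m(1-F(s))$ is essentially the further change $v=(m+1)t$ applied to the paper's Laplace form, but done in one step and analysed directly. The inversion you write for $u=s^2/(2\Delta^2)$ is exactly the same asymptotic content as the paper's expansion of $s(t)^2$, and your $\int_0^\infty e^{-v}v^{\Delta^2-1}\,dv=\Gamma(\Delta^2)$ replaces the paper's quotable $L$- and $G$-expansions. The advantage of the paper's organisation is that the delicate uniform-in-$v$ remainder control you flag as the ``main obstacle'' is already packaged into the known asymptotics of $L$ and $G$; the advantage of yours is that it is more self-contained and makes the mechanism (localisation near $s\sim\Delta\sqrt{2\log m}$, emergence of $\Gamma(\Delta^2)$) transparent. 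Your remark that replacing $m$ by $m+1$ costs only $O(1/m)$ is correct and explains why the paper's route naturally produces $m+1$ (the extra $e^{-t}$ comes from the Jacobian $d\erf/\erf$) while yours produces $m$.
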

\begin{proof}
The proof of this lemma is similar to the derivations in \cite[Section 2, Chapter 2]{Wong2001}, and is deferred to the end of this appendix.
\end{proof}


\paragraph*{Proof of \Cref{cor:prob[RPXis_succ]=Omega_p=0}}
\Cref{cor:prob[success]>J} implies that 
\begin{equation} \label{eq: boundcorparta}
     \prob[\text{\eqref{eq: AREGO} is successful}\,] \geq I(\mvec{0},\| \mvec{x}_{\top}^*\|) \geq J_{m,n}(\| \mvec{x}_{\top}^*\|). 
\end{equation}
By definition of $\mvec{x}_\top^*$, there exists $\mvec{x}^* \in G$ such that $\mvec{x}_\top^* = \mtx{U} \mtx{U}^T \mvec{x}^*$ with $\mtx{U} = [\mtx{I}_{d_e}; \; \mtx{0}]$. Then $\mvec{x}^*_\top = [\mvec{x}^*_{1:d_e}; \mvec{0}]$ which implies $\| \mvec{x}_{\top}^*\| \leq \sqrt{d_e}$.
By monotonic decrease of $J_{m,n}$ (see \Cref{lemma:J_monot_decreasing}), \eqref{eq: boundcorparta} yields
\[ \prob[\text{\eqref{eq: AREGO} is successful}\,] \geq J_{m,n}(\sqrt{d_e}) \]
for all $\mvec{x}^*, \mvec{p} \in \mathcal{X}$ such that $\mvec{x}_{\top}^* \neq \mvec{p}_{\top}$. If $\mvec{x}_{\top}^* = \mvec{p}_{\top}$, then 
$$ \prob[\text{\eqref{eq: AREGO} is successful}\,] = 1 \geq J_{m,n}(\sqrt{d_e}),$$
where the inequality follows from \Cref{lemma:J<=1}.
Thus, \eqref{eq:prob[success]=>tau_in_corollary} is satisfied for $\tau_{\mvec{0}} = J_{m,n}(\sqrt{d_e})$, and \eqref{eq:asym_exp_p=0} follows from \Cref{thm:expansion_of_J(delta)}.\hfill$\Box$



\subsection{Proof of \texorpdfstring{\Cref{cor:prob[RPXis_succ]=Omega_general_p}}{Theorem 3.8} }
Unlike the case $\mvec{p} = \mvec{0}$, we cannot rewrite directly the integral $I(\mvec{p},\Delta)$ in terms if the integral $J_{m,n}(\Delta)$ (i.e., \Cref{cor:prob[success]>J} does not hold) for $\mvec{p} \in \mathcal{X}$ arbitrary. However, we derive a lower bound on $I(\mvec{p},\Delta)$ in terms of the simpler integral $J_{m,n}(\Delta)$ that is valid for all $\mvec{p} \in \mathcal{X}$. 

\begin{lemma} \label{lem: low_bound_JmnDelta2} For any $\mvec{p} \in \mathcal{X}$ and for any $\Delta > 0$, we have
\[ I(\mvec{p},\Delta) \geq \frac{1}{2^m}J_{m,n}(\Delta/2). \]
\end{lemma}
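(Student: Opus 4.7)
The plan is to show the bound pointwise in the integration variable $s$: for each $s > 0$ and each index $i \in \{d_e+1,\dots,D\}$, I will lower bound the one-dimensional Gaussian integral appearing in $I(\mvec{p},\Delta)$ by a $1/2$-fraction of the ``centered'' integral appearing in $J_{m,n}(\Delta/2)$. Multiplying over the $m = D-d_e$ factors then yields the claimed $1/2^m$ factor, and integrating in $s$ against the common weight $s^{n-1}e^{-s^2/2}/(2^{n/2-1}\Gamma(n/2))$ delivers the inequality.

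Concretely, fix $s > 0$ and $i$. Since $p_i \in [-1,1]$, the interval $[s(-1-p_i)/\Delta,\; s(1-p_i)/\Delta]$ has length exactly $L := 2s/\Delta$ and contains the origin. The heart of the argument is the elementary fact that, among all intervals of a given length $L$, the integral of $e^{-x^2/2}$ is smallest at the endpoints when the interval degenerates to $[0,L]$ (or $[-L,0]$). More precisely, for $c \in [-L,0]$, the function $c \mapsto \int_c^{c+L} e^{-x^2/2}\,dx$ has derivative $e^{-(c+L)^2/2} - e^{-c^2/2}$, which is positive on $[-L,-L/2]$ and negative on $[-L/2,0]$; hence the minimum on $[-L,0]$ is attained at the endpoints $c=0$ and $c=-L$ (which give the same value by symmetry). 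Applying this with $c = -s(1+p_i)/\Delta \in [-L,0]$ yields
\[
\int_{s(-1-p_i)/\Delta}^{s(1-p_i)/\Delta} e^{-x^2/2}\,dx \;\geq\; \int_0^{2s/\Delta} e^{-x^2/2}\,dx,
\]
i.e., $\frac{1}{\sqrt{2\pi}}\int_{s(-1-p_i)/\Delta}^{s(1-p_i)/\Delta} e^{-x^2/2}\,dx \geq \frac{1}{2}\sqrt{\tfrac{2}{\pi}}\int_0^{2s/\Delta} e^{-x^2/2}\,dx.$

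Taking the product over the $m$ indices and substituting into the definition \eqref{def:I(p,Delta)} of $I(\mvec{p},\Delta)$, the constants $2^{-m}$ factor out of the $s$-integral, and the remaining expression is exactly $J_{m,n}(\Delta/2)$ by \eqref{eq:J(delta)} with $\Delta$ replaced by $\Delta/2$ (since $s/(\Delta/2) = 2s/\Delta$). This gives $I(\mvec{p},\Delta) \geq 2^{-m} J_{m,n}(\Delta/2)$.

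The only subtle step is the monotonicity/localization lemma for the Gaussian integral over a translated interval of fixed length; everything else is bookkeeping. I do not expect any genuine obstacle, since $e^{-x^2/2}$ is unimodal with its peak at $0$ and the interval in $I(\mvec{p},\Delta)$ always contains $0$ thanks to the constraint $\mvec{p}\in\mathcal{X}=[-1,1]^D$. The factor $1/2^m$ is the price paid for not knowing whether the interval is skewed to the left or to the right of zero.
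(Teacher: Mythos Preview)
Your proof is correct and follows essentially the same approach as the paper. The paper parametrizes the one-dimensional integral by $z = p_i$ and differentiates in $z$ via the Leibniz rule, while you parametrize by the left endpoint $c = s(-1-p_i)/\Delta$ and differentiate in $c$; since these variables are affinely related, both arguments establish the identical key fact that the Gaussian integral over a length-$2s/\Delta$ interval containing the origin is minimized when the interval is $[0,2s/\Delta]$ (equivalently $[-2s/\Delta,0]$), after which the product and integration in $s$ proceed the same way.
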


\begin{proof}
Let us define the function 
\begin{equation}
    g(z,\Delta,s) = \frac{1}{\sqrt{2\pi}} \int_{s(-1-z)/\Delta}^{s(1-z)/\Delta}e^{-x^2/2} dx,
\end{equation}
and note that
\begin{equation}\label{eq:I_in_terms_of_g}
    I(\mvec{p},\Delta) = \frac{1}{2^{\frac{n}{2}-1}\Gamma(\frac{n}{2})} \int_0^{\infty} \left(\prod_{i=d_e+1}^D g(p_i,\Delta,s)\right) s^{n-1} e^{-s^2/2} ds.
\end{equation}
Next we find the minimizers of $g(z,\Delta,s)$ over $z \in [-1,1]$. Introducing the notation $l(z,\Delta,s) := s(1-z)/\Delta$, and using Leibniz integral rule, we obtain
\begin{equation}
\begin{aligned}
    \frac{d g(z,\Delta,s)}{d z} & = e^{\frac{-l(z,\Delta,s)^2}{2}}\frac{d (l(z,\Delta,s))}{d z}-e^{\frac{-l(-z,\Delta,s)^2}{2}}\frac{d(-l(-z,\Delta,s))}{d z} \\
    & = e^{\frac{-s^2(1-z)^2}{2\Delta^2}}\left(\frac{-s}{\Delta}\right) - e^{\frac{-s^2(-1-z)^2}{2\Delta^2}}\left(\frac{-s}{\Delta}\right) \\
    & = \frac{s}{\Delta}e^{-\frac{s^2}{2\Delta^2}(1+z^2)}\left(e^{-\frac{s^2z}{\Delta^2}}-e^{\frac{s^2z}{\Delta^2}}\right).
\end{aligned}
\end{equation}
Hence, $d g(z,\Delta,s)/d z$ is equal to zero if and only if 
\begin{equation}
     e^{-\frac{s^2z}{\Delta^2}}-e^{\frac{s^2z}{\Delta^2}} = 0,
\end{equation}
which occurs only at $z = 0$. The sign of $d g(z,\Delta,s)/d z$ changes from negative to positive at $z=0$ implying that the function is concave and so $g(z,\Delta,s)$ attains its maximum at $z=0$ and its minimum at the boundaries. Since $g(z,\Delta,s)$ is symmetric around $z=0$, the minimum is attained at $z = \pm 1$. Thus, for all $z \in [-1,1]$,
\begin{equation}\label{eq:g(p,s)_lower_bound}
    g(z,\Delta,s) \geq g(-1,\Delta, s) = \frac{1}{\sqrt{2\pi}} \int_{-l(1,\Delta,s)}^{l(-1,\Delta,s)}e^{-x^2/2} dx = \frac{1}{\sqrt{2\pi}} \int_{0}^{\frac{2s}{\Delta}}e^{-x^2/2} dx.
\end{equation}
By combining \eqref{eq:g(p,s)_lower_bound} with \eqref{eq:I_in_terms_of_g},  we obtain
\begin{equation}\label{eq:I(p,delta)_lower_bound}
\begin{aligned}
I(\mvec{p},\Delta) & \geq \frac{1}{2^{\frac{n}{2}-1}\Gamma(\frac{n}{2})} \int_0^{\infty} \left(\prod_{i=d_e+1}^D \frac{1}{\sqrt{2\pi}} \int_{0}^{\frac{2s}{\Delta}}e^{-x^2/2} dx \right) s^{n-1} e^{-s^2/2} ds \\
& = \frac{1}{2^m}\cdot\frac{1}{2^{\frac{n}{2}-1}\Gamma(\frac{n}{2})} \int_0^{\infty} \left(\sqrt{\frac{2}{\pi}} \int_{0}^{\frac{2s}{\Delta}} e^{-x^2/2} dx \right)^m s^{n-1} e^{-s^2/2} ds \\
& = \frac{1}{2^m}J_{m,n}(\Delta/2).
\end{aligned}
\end{equation}
 
\end{proof}

\paragraph*{Proof of \Cref{cor:prob[RPXis_succ]=Omega_general_p}.}
\Cref{lemma:prob[success]>I} and \Cref{lem: low_bound_JmnDelta2} provide
\begin{equation}\label{eq:I(p,Delta)>JDelta/2}
    \prob[\text{\eqref{eq: AREGO} is successful}\,] \geq I(\mvec{p},\Delta) \geq \frac{1}{2^m} J_{m,n}(\Delta/2).
\end{equation}
Let us now show that $\Delta \leq 2\sqrt{d_e}$ for all $\mvec{x}^*,\mvec{p}\in [-1,1]^D$. Since $\mtx{U} = [\mtx{I}_{d_e} \; \mtx{0}]^T$, for any global minimizer $\mvec{x}^*$, we have $\mvec{x}_{\top}^* = \mtx{U} \mtx{U}^T \mvec{x}^* = [\mvec{x}_{1:d_e}^*; \; \mvec{0}]$, and for any $\mvec{p}$, we have $\mvec{p}_{\top} = \mtx{U} \mtx{U}^T \mvec{p} = [\mvec{p}_{1:d_e}; \; \mvec{0}]$. 
Since $\mvec{x}^*,\mvec{p} \in [-1,1]^D$, there holds $\|\mvec{x}_{\top}^*\| \leq \sqrt{d_e}$ and $\|\mvec{p}_{\top}\| \leq \sqrt{d_e}$, and hence, $\Delta = \| \mvec{x}_{\top}^* - \mvec{p}_{\top} \| \leq \|\mvec{x}_{\top}^*\| + \|\mvec{p}_{\top}\| \leq 2\sqrt{d_e}.$

Using the fact that $J_{m,n}(\Delta)$ is a monotonically decreasing function (see \Cref{lemma:J_monot_decreasing}), \eqref{eq:I(p,Delta)>JDelta/2} yields 
\begin{equation}\label{eq:I(p,Delta)>J(sqrt(d_e))/2^m}
    \prob[\text{\eqref{eq: AREGO} is successful}\,] \geq \frac{1}{2^m} J_{m,n}(\sqrt{d_e})
\end{equation}
for all $\mvec{x}^*, \mvec{p} \in \mathcal{X}$ such that $\mvec{x}_{\top}^* \neq \mvec{p}_{\top}$. If $\mvec{x}_{\top}^* = \mvec{p}_{\top}$, then 
$$ \prob[\text{\eqref{eq: AREGO} is successful}\,] = 1 \geq \frac{1}{2^m}J_{m,n}(\sqrt{d_e}),$$
where the inequality follows from \Cref{lemma:J<=1}.
Thus, \eqref{eq:prob[success]=>tau_in_corollary} is satisfied for $\tau = J_{m,n}(\sqrt{d_e})/2^m$, and \eqref{eq:asym_exp_general_p} follows from \Cref{thm:expansion_of_J(delta)}.\hfill$\Box$

\subsection{Proof of \texorpdfstring{\Cref{thm:expansion_of_J(delta)}}{Theorem D.5} }
We rewrite $J_{m,n}(\Delta)$ as follows
\begin{equation*}
\begin{aligned}
    J_{m,n}(\Delta) & = \frac{1}{2^{\frac{n}{2}-1}\Gamma(\frac{n}{2})} \int_0^{\infty} \left(\sqrt{\frac{2}{\pi}} \int_{0}^{s/\Delta} e^{-x^2/2} dx \right)^m s^{n-1} e^{-s^2/2} ds \\
    & = \frac{1}{2^{\frac{n}{2}-1}\Gamma(\frac{n}{2})} \int_0^{\infty} \left(\frac{2}{\sqrt{\pi}} \int_{0}^{\frac{s}{\sqrt{2}\Delta}} e^{-x^2} dx \right)^m s^{n-1} e^{-s^2/2} ds \\
    & = \frac{1}{2^{\frac{n}{2}-1}\Gamma(\frac{n}{2})} \int_0^{\infty} \erf^m\left(\frac{s}{\sqrt{2}\Delta}\right) s^{n-1} e^{-s^2/2} ds,
\end{aligned}
\end{equation*}
where $\erf(\cdot)$ denotes the usual error function. After making an appropriate transformation, the integral becomes
\begin{equation*}
    J_{m,n}(\Delta) = \frac{2 \Delta^n}{\Gamma(\frac{n}{2})} \int_0^{\infty} \erf^m(s) s^{n-1} e^{-\Delta^2s^2} ds
\end{equation*}
In \cite[Section 2, Chapter 2]{Wong2001}, \citeauthor{Wong2001} derives an asymptotic expansion of a similar integral; our derivations are based on his method.

As $s$ varies from $0$ to $\infty$, $\erf(s)$ increases monotonically from $0$ to $1$. So, for $m$ large almost all the mass of the integrand is concentrated at $\infty$. We make the substitution $e^{-t} = \erf(s)$ to bring the integral to the form:
\begin{equation}\label{eq:transformed_I_in_t}
    J_{m,n}(\Delta) = \frac{\sqrt{\pi}\Delta^n }{\Gamma(\frac{n}{2})} \int_{0}^{\infty} e^{Ks(t)^2} s(t)^{n-1} e^{-(m+1)t} dt,
\end{equation}
where $K = 1-\Delta^2$ and $s(t) = \erf^{-1}(e^{-t})$. Due to monotonicity of $\erf$, $s(t)$ is uniquely defined for every $t$. As $\erf$ varies from $0$ to $1$, $t$ varies from $\infty$ to $0$. So the mass of the transformed integrand is now concentrated around 0.

We will derive the asymptotic expansion for \eqref{eq:transformed_I_in_t} in three steps:
\begin{enumerate}
    \item First, we will derive the asymptotic expansion of $e^{Ks(t)^2} s(t)^{n-1}$.
    \item Then, we will show that, for any $0<c<1$, the integral
    \begin{equation*}
        \int_c^{\infty} e^{Ks(t)^2} s(t)^{n-1} e^{-(m+1)t} dt
    \end{equation*}
    is exponentially small.
    \item Finally, we will derive the asymptotic expansion of
    \begin{equation*}
        \int_{0}^c e^{Ks(t)^2} s(t)^{n-1} e^{-(m+1)t} dt.
    \end{equation*}
\end{enumerate}

\subsubsection*{Step 1}
\begin{lemma}\label{lemmma:Asymptote_of_s(t)} (see \cite[Lemma 1, p.~67]{Wong2001})
For small positive $t$, $s(t) = \erf^{-1}(e^{-t})$ satisfies 
\begin{equation*}
    s(t)^2 = -\log(t) - \frac{1}{2} \log(-\log(t)) - \log(\sqrt{\pi}) + \frac{\log(-\log(t))}{4(-\log(t))} - \frac{\log(e/\sqrt{\pi})}{2(-\log(t))} + O\left( \frac{\log^2(-\log(t))}{(\log(t))^2} \right).
\end{equation*}
\end{lemma}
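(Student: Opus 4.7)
The plan is to invert the standard asymptotic expansion of the complementary error function. As $t\to 0^+$, the equation $\erf(s(t))=e^{-t}$ forces $s(t)\to\infty$, so one can rewrite it as $\erfc(s(t))=1-e^{-t}$ and then apply the well-known expansion
\[
\erfc(s) = \frac{e^{-s^{2}}}{s\sqrt{\pi}}\left(1-\frac{1}{2s^{2}}+O(s^{-4})\right)\quad\text{as }s\to\infty.
\]
On the other hand, for small $t>0$, $1-e^{-t}=t(1+O(t))$. I would equate these two expressions and then take logarithms of both sides, which transforms a multiplicative relation into an additive one:
\[
\log t + O(t) \;=\; -s^{2}-\log s-\tfrac{1}{2}\log\pi-\frac{1}{2s^{2}}+O(s^{-4}).
\]
Since $s^{2}\sim -\log t$, the $O(t)$ error on the left is super-exponentially smaller than the $s^{-2}$ terms on the right, so it can be absorbed into the final error estimate.

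Writing $L:=-\log t$, this yields the implicit equation
\[
s^{2} = L - \log s - \tfrac{1}{2}\log\pi - \frac{1}{2s^{2}} + O(s^{-4}),
\]
which I would then solve by iteration in the style of bootstrap/Newton-type expansions for implicit asymptotics. The zeroth-order approximation $s^{2}\approx L$ gives $\log s\approx\tfrac{1}{2}\log L$; substituting back produces the two-term refinement $s^{2}\approx L-\tfrac{1}{2}\log L-\tfrac{1}{2}\log\pi$. Expanding $\log s=\tfrac{1}{2}\log\!\bigl(L-\tfrac{1}{2}\log L-\tfrac{1}{2}\log\pi+O(1/L)\bigr)$ using $\log(1+u)=u+O(u^{2})$, and also expanding $1/(2s^{2})=1/(2L)+O(\log L/L^{2})$, produces the next-order corrections
\[
-\log s = -\tfrac{1}{2}\log L + \frac{\log L}{4L}+\frac{\log\pi}{4L}+O\!\left(\frac{\log^{2}L}{L^{2}}\right),\qquad -\frac{1}{2s^{2}}=-\frac{1}{2L}+O\!\left(\frac{\log L}{L^{2}}\right).
\]
Collecting these terms and using $-\tfrac{1}{2}\log\pi+\tfrac{\log\pi}{4L}-\tfrac{1}{2L}=-\log\sqrt{\pi}-\tfrac{\log(e/\sqrt{\pi})}{2L}$ rearranges the result into the form stated in the lemma.

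The main obstacle is purely bookkeeping: tracking the various error terms through the iteration and justifying that each substitution preserves the claimed error bound $O(\log^{2}(-\log t)/(\log t)^{2})$. In particular, one must verify that the $O(s^{-4})$ term coming from the $\erfc$ expansion and the $O(t)$ term coming from $\log(1-e^{-t})$ are both dominated by $\log^{2}L/L^{2}$ after the substitution $s^{2}=L+O(\log L)$; the former is $O(1/L^{2})$ and the latter is exponentially small in $L$, so both are absorbed, and the claim follows. Everything else is straightforward Taylor expansion of $\log(1+u)$.
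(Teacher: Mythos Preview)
Your proposal is correct and follows essentially the same approach as the paper: both start from the asymptotic expansion of $\erfc(s)$, take logarithms to obtain the implicit relation $\log t + O(t) = -s^{2}-\log s-\tfrac{1}{2}\log\pi-\tfrac{1}{2s^{2}}+O(s^{-4})$, and then solve it by successive substitution (the paper writes $s^{2}=-\log t+\epsilon_{1}(t)$, then $s^{2}=-\log t-\tfrac{1}{2}\log(-\log t)-\log\sqrt{\pi}+\epsilon_{2}(t)$, and iterates, which is exactly your bootstrap with $L=-\log t$). The algebra and error tracking are the same.
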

\begin{proof}
The asymptotic expansion of $\erf(s)$ at infinity is given by 
\begin{equation*}
    \erf(s) \sim 1 - \frac{e^{-s^2}}{\sqrt{\pi}s}\left( 1- \frac{1}{2s^2} + \frac{3}{4s^4} - \cdots \right)
\end{equation*}
By writing $1 - e^{-t} = 1-\erf(s)$ and using Taylor's expansion for $e^{-t}$ at $0$, we obtain
\begin{equation*}
    t(1+O(t)) = \frac{e^{-s^2}}{\sqrt{\pi}s}\left( 1- \frac{1}{2s^2} + \frac{3}{4s^4} - \cdots \right).
\end{equation*}
By taking logs on both sides and using the Taylor's expansion for $\log(1+x)$, we have
\begin{equation}\label{eq:taylor_exp_exp(-t)_and_erf}
    \log(t) + O(t) = -s^2-\log(\sqrt{\pi}) - \log(s) - \frac{1}{2s^2} + O\left(\frac{1}{s^4}\right).
\end{equation}
The dominant terms are $\log(t)$ and $s^2$, hence
\begin{equation} \label{eq:s^2_sim_-log(t)}
    \text{$s^2 \sim -\log(t)$, as $t \rightarrow 0^{+}$}. 
\end{equation}
To obtain higher order approximations, we write
\begin{equation*} 
    s(t)^2 = -\log(t) + \epsilon_1(t)
\end{equation*}
and substitute this into \eqref{eq:taylor_exp_exp(-t)_and_erf}. We have
\begin{equation} \label{eq:second_order_expansion_with_esp}
\begin{aligned}
    \log(t) + O(t) = \log(t)-\epsilon_1(t) - \log(\sqrt{\pi}) - \frac{1}{2}\log(-\log(t))& -\frac{1}{2}\log\left( 1+ \frac{\epsilon_1(t)}{-\log(t)} \right) + \\
    & + O\left( \frac{1}{-\log(t)+\epsilon_1(t)}  \right)
\end{aligned}
\end{equation}
Note that by \eqref{eq:s^2_sim_-log(t)}, as $t \rightarrow 0^{+}$
\begin{equation} \label{eq:eps/-log(t)->0}
    \frac{\epsilon_1(t)}{-\log(t)} \rightarrow 0.
\end{equation}
By using \eqref{eq:eps/-log(t)->0} in \eqref{eq:second_order_expansion_with_esp}, we obtain
\begin{equation}\label{eq:def_eps_1}
    \epsilon_1(t) = - \frac{1}{2}\log(-\log(t))- \log(\sqrt{\pi}) + o(1).
\end{equation}
To obtain the following leading terms in the approximation we write
\begin{equation}\label{eq:s^2=first+second+eps_2}
    s^2(t) = -\log(t) - \frac{1}{2}\log(-\log(t))- \log(\sqrt{\pi}) + \epsilon_2(t)
\end{equation}
and repeat the above procedure. We substitute  \eqref{eq:s^2=first+second+eps_2} into \eqref{eq:taylor_exp_exp(-t)_and_erf} and after a little manipulation obtain
\begin{equation}\label{eq:eps_2_big_expansion}
\begin{aligned}
    O(t) = -\epsilon_2(t) & -\frac{1}{2} \log \left( 1 - \frac{1}{2} \frac{\log(-\log(t))}{-\log(t)} - \frac{\log(\sqrt{\pi})}{-\log(t)} + \frac{\epsilon_2(t)}{-\log(t)} \right)- \\
    & -\frac{1}{2} \cdot \frac{1}{-\log(t)} \cdot \frac{1}{1-\frac{1}{2}\frac{\log(-\log(t))}{-\log(t)} - \frac{\log(\sqrt{\pi})}{-\log(t)} + \frac{\epsilon_2(t)}{-\log(t)}} + O((-\log(t))^{2})
\end{aligned}
\end{equation}
Using the fact (by \eqref{eq:def_eps_1}) that $\epsilon_2(t) = o(1)$ and Taylor's expansions for $\log(1+x)$ and $1/(1-x)$, we obtain
\begin{equation*}
\begin{aligned}
    O(t) = -\epsilon_2(t) & - \frac{1}{2}\left(- \frac{1}{2} \frac{\log(-\log(t))}{-\log(t)} + O\left( \frac{1}{-\log(t)} \right)  \right) - \\
    & -\frac{1}{2} \cdot \frac{1}{-\log(t)} \left( 1 + O\left(\frac{\log(-\log(t))}{-\log(t)} \right) \right),
\end{aligned}
\end{equation*}
which yields
\begin{equation}\label{eq:def_esp_2_first_term_expansion}
    \epsilon_2(t) = \frac{\log(-\log(t))}{4(-\log(t))} + O\left( \frac{1}{-\log(t)} \right).
\end{equation}
To obtain the following leading terms in the expansion of $\epsilon_2(t)$, we use \eqref{eq:def_esp_2_first_term_expansion} in \eqref{eq:eps_2_big_expansion} leaving the first term ($-\epsilon_2(t)$) as is:
\begin{equation*}
    \begin{aligned}
        O(t) = -\epsilon_2(t) & -\frac{1}{2} \log \left( 1 - \frac{1}{2} \frac{\log(-\log(t))}{-\log(t)} - \frac{\log(\sqrt{\pi})}{-\log(t)} + O\left(\frac{\log(-\log(t))}{(-\log(t))^2}\right) \right)- \\
        & -\frac{1}{2} \cdot \frac{1}{-\log(t)} \cdot \frac{1}{1-\frac{1}{2}\frac{\log(-\log(t))}{-\log(t)} - \frac{\log(\sqrt{\pi})}{-\log(t)} + O\left(\frac{\log(-\log(t))}{(-\log(t))^2}\right)} + O((-\log(t))^{2})
    \end{aligned}
\end{equation*}
Now, using Taylor's expansions for $\log(1+x)$ and $1/(1-x)$, we obtain
\begin{equation*}
\begin{aligned}
    O(t) = -\epsilon_2(t) & - \frac{1}{2}\left(- \frac{1}{2} \frac{\log(-\log(t))}{-\log(t)} - \frac{\log(\sqrt{\pi})}{-\log(t)} +  O\left(\frac{\log^2(-\log(t))}{(-\log(t))^2}\right)  \right) - \\
    & -\frac{1}{2} \cdot \frac{1}{-\log(t)} \left( 1 + O\left(\frac{\log(-\log(t))}{-\log(t)} \right) \right),
\end{aligned}
\end{equation*}
Hence,
\begin{equation*}
    \epsilon_2(t) = \frac{\log(-\log(t))}{4(-\log(t))} - \frac{\log(e/\sqrt{\pi})}{2(-\log(t))} + O\left( \frac{\log^2(-\log(t))}{(-\log(t))^2} \right).
\end{equation*}
\end{proof}

\begin{corollary}\label{cor:expansion_of_exp(Ks^2)s^(n-1)}
Let $l(t) = -\log(t)$. Then, as $t \rightarrow 0^{+}$,
\begin{equation} \label{eq:expansion_of_exp(Ks(t)^2)s(t)^(n-1)}
\begin{aligned}
    e^{Ks(t)^2}s(t)^{n-1} = e^{Kl(t)}\pi^{-K/2} l(t)^{\frac{n-1-K}{2}} \left(1 - \left(\frac{n-1-K}{4}\right)\frac{\log(l(t))}{l(t)} \right. & - \frac{\log(e^{K/2}\pi^{\frac{n-1-K}{4}})}{l(t)}
     \\
    & \left. + O{\left( \frac{\log^2(l(t))}{l(t)^2}\right)} \right)
\end{aligned}
\end{equation}
\end{corollary}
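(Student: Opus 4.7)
The plan is to substitute the asymptotic expansion of $s(t)^2$ from \Cref{lemmma:Asymptote_of_s(t)} into the factors $e^{Ks(t)^2}$ and $s(t)^{n-1}$ separately, expand each using the standard series for the exponential and the binomial function, and then multiply the two expansions, carefully collecting the $O(\log l(t) / l(t))$ and $O(1/l(t))$ corrections. Throughout, I use $l(t) = -\log t \to \infty$ as $t \to 0^+$, so that all error terms below are small.

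First, for the exponential factor, I would write
$$ Ks(t)^2 = Kl(t) - \tfrac{K}{2}\log l(t) - K\log\sqrt{\pi} + R(t), \qquad R(t) = \frac{K\log l(t)}{4 l(t)} - \frac{K\log(e/\sqrt{\pi})}{2 l(t)} + O\!\left(\frac{\log^2 l(t)}{l(t)^2}\right), $$
directly from \Cref{lemmma:Asymptote_of_s(t)}. Exponentiating and using $e^{R(t)} = 1 + R(t) + O(R(t)^2)$ yields
$$ e^{Ks(t)^2} = e^{Kl(t)} \pi^{-K/2} l(t)^{-K/2}\left(1 + \frac{K\log l(t)}{4 l(t)} - \frac{K\log(e/\sqrt{\pi})}{2 l(t)} + O\!\left(\frac{\log^2 l(t)}{l(t)^2}\right)\right). $$
For the power factor, I would factor $l(t)$ out of $s(t)^2$ to obtain $s(t)^2 = l(t)\,[1 + \eta(t)]$ with
$$ \eta(t) = -\frac{\log l(t)}{2 l(t)} - \frac{\log\sqrt{\pi}}{l(t)} + O\!\left(\frac{\log l(t)}{l(t)^2}\right), $$
so that $s(t)^{n-1} = l(t)^{(n-1)/2}(1+\eta(t))^{(n-1)/2}$, and apply $(1+\eta)^{\alpha} = 1 + \alpha \eta + O(\eta^2)$ to get
$$ s(t)^{n-1} = l(t)^{(n-1)/2}\left(1 - \frac{(n-1)\log l(t)}{4 l(t)} - \frac{(n-1)\log\sqrt{\pi}}{2 l(t)} + O\!\left(\frac{\log^2 l(t)}{l(t)^2}\right)\right). $$

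Multiplying the two expansions produces the overall prefactor $e^{Kl(t)}\pi^{-K/2}l(t)^{(n-1-K)/2}$, in agreement with the claimed leading behaviour. The coefficient of $\log l(t)/l(t)$ inside the bracket is $\tfrac{K}{4} - \tfrac{n-1}{4} = -\tfrac{n-1-K}{4}$, which matches. The coefficient of $1/l(t)$ combines the two constant contributions as
$$ -\frac{K\log(e/\sqrt{\pi})}{2} - \frac{(n-1)\log\sqrt{\pi}}{2} = -\frac{K}{2} - \frac{(n-1-K)\log\sqrt{\pi}}{2} = -\log\!\bigl(e^{K/2}\pi^{(n-1-K)/4}\bigr), $$
which is exactly the constant appearing in \eqref{eq:expansion_of_exp(Ks(t)^2)s(t)^(n-1)}. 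Finally, all cross terms of the two series that were truncated are of size $O(\log^2 l(t)/l(t)^2)$ and can be absorbed into the error term.

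The only real obstacle is bookkeeping: I must be careful to keep track of exactly which contributions feed into the $\log l(t)/l(t)$ and $1/l(t)$ orders when multiplying the two series (in particular, that the $O(\log l(t)/l(t))$ term of one factor times the $O(1)$ of the other is the only source of $\log l(t)/l(t)$ after multiplication, while the $1/l(t)$ contributions come only from the constant-multiple $1/l(t)$ terms inside each factor). The compact identity $-K\log(e/\sqrt{\pi})/2 - (n-1)\log\sqrt{\pi}/2 = -\log(e^{K/2}\pi^{(n-1-K)/4})$ should then be checked by direct simplification; no deeper analytic input beyond \Cref{lemmma:Asymptote_of_s(t)} and the Taylor series for $e^x$ and $(1+x)^{\alpha}$ is needed.
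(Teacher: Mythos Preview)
Your proposal is correct and follows essentially the same approach as the paper: the paper also splits the product into $e^{Ks(t)^2}$ and $s(t)^{n-1}$, expands each via the Taylor series for $\exp$ and the binomial expansion (obtaining exactly your two displayed intermediate expansions), and then multiplies the leading terms. Your write-up is in fact slightly more detailed than the paper's, which simply states ``by multiplying the leading terms \dots\ we obtain the desired result'' without explicitly checking the coefficient identity $-\tfrac{K}{2}\log(e/\sqrt{\pi}) - \tfrac{n-1}{2}\log\sqrt{\pi} = -\log\bigl(e^{K/2}\pi^{(n-1-K)/4}\bigr)$ that you verify.
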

\begin{proof}
From \autoref{lemmma:Asymptote_of_s(t)} it follows that 
\begin{equation*}
    e^{Ks(t)^2} = e^{Kl(t)}l(t)^{-K/2}\pi^{-K/2}\exp\left(\frac{K\log(l(t))}{4l(t)} - \frac{K\log(e/\sqrt{\pi})}{2l(t)} + O\left( \frac{\log^2(l(t))}{l(t)^2} \right) \right).
\end{equation*}
By using Taylor's expansion for $\exp$ we obtain 
\begin{equation}\label{eq:expansion_of_exp(Ks(t)^2)}
    e^{Ks(t)^2} = e^{Kl(t)}l(t)^{-K/2}\pi^{-K/2}\left(1+\frac{K\log(l(t))}{4l(t)} - \frac{K\log(e/\sqrt{\pi})}{2l(t)} + O\left(\frac{\log^2(l(t))}{l(t)^2}\right) \right).
\end{equation}
Similarly, using \autoref{lemmma:Asymptote_of_s(t)} and binomial expansion, for $s(t)^{n-1}$, we have
\begin{equation}\label{eq:expansion_of_s(t)^(n/2-1/2)}
    (s(t)^2)^{\frac{n-1}{2}} = l(t)^{\frac{n-1}{2}}\left(1- \frac{(n-1)\log(l(t))}{4l(t)}-\frac{(n-1)\log(\sqrt{\pi})}{2l(t)} + O\left(\frac{\log^2(l(t))}{l(t)^2}\right) \right)
\end{equation}
By multiplying the leading terms in \eqref{eq:expansion_of_exp(Ks(t)^2)} and \eqref{eq:expansion_of_s(t)^(n/2-1/2)}, we obtain the desired result.
\end{proof}

\subsubsection*{Step 2}
Let $0<c<1$. We will show that, for large $m$,
    \begin{equation*}
        \int_c^{\infty} e^{Ks(t)^2} s(t)^{n-1} e^{-(m+1)t} dt = O\left(\frac{e^{-c(m+n)}}{m+n}\right).
    \end{equation*}
Let $\erf(s) = e^{-t}$. First, we establish that
\begin{equation}\label{stat:inv_err<Ax}
\text{there exists a positive constant $A$ such that $s(t) = \erf^{-1}(e^{-t}) \leq Ae^{-t}$ for all $t \in [c,\infty)$.}
\end{equation}
Note that \eqref{stat:inv_err<Ax} holds if there exists an $A>0$ such that $\erf^{-1}(x) \leq Ax$ for all $x \in [0,e^{-c}]$.
To prove this, we apply the Mean Value Theorem to $\erf^{-1}$ over $[0,x]$; by the Mean Value Theorem there exists $y\in(0,x)$ such that
\begin{equation}\label{eq:inv_err_MVT}
    \frac{\erf^{-1}(x) - \erf^{-1}(0)}{x-0} = (\erf^{-1})^{'}(y)
\end{equation}
Using the following formula for the derivative of the inverse of the error function \cite[eq (2.4), p.~192]{Amdeberhan2008},
\begin{equation*}
    (\erf^{-1}(x))' = \frac{\sqrt{\pi}}{2} e^{(\erf^{-1}(x))^2},
\end{equation*}
from \eqref{eq:inv_err_MVT}, we obtain
\begin{equation} \label{eq:inv_erf/x=der}
    \frac{\erf^{-1}(x)}{x} = \frac{\sqrt{\pi}}{2} e^{(\erf^{-1}(y))^2}.
\end{equation}
Since $\erf^{-1}$ is an increasing function and $y < x \leq e^{-c}$, \eqref{eq:inv_erf/x=der} gives
\begin{equation*}
    \erf^{-1}(x) \leq \frac{\sqrt{\pi}}{2} e^{(\erf^{-1}(e^{-c}))^2} x,
\end{equation*}
which proves \eqref{stat:inv_err<Ax}.

Now, since $s(t)$ is a monotonically decreasing function with $s(\infty) = 0$, we have\footnote{Over $t \in [c,\infty)$, for $K\geq 0$, $e^{Ks(t)^2}\leq e^{Ks(c)^2}$ and, for $K<0$, $e^{Ks(t)^2}\leq 1$.}
\begin{equation}\label{ineq:e^(Ks^2)<max(1,e^(Ks(c)^2))}
\text{$e^{Ks(t)^2} \leq \max\{1,e^{Ks(c)^2}\}$ for $t \geq c$.}
\end{equation}
Using \eqref{stat:inv_err<Ax} and \eqref{ineq:e^(Ks^2)<max(1,e^(Ks(c)^2))}, we finally obtain
\begin{equation*}
\begin{aligned}
    \int_c^{\infty} e^{Ks(t)^2} s(t)^{n-1} e^{-(m+1)t} dt & \leq A^{n-1} \max\{1,e^{Ks(c)^2}\}  \int_c^{\infty} e^{-(m+n)t} dt \\
    & = A^{n-1} \max\{1,e^{Ks(c)^2}\} \frac{e^{-c(m+n)}}{m+n}.
\end{aligned}
\end{equation*}


\subsubsection*{Step 3}
Let $L(\lambda,\mu,z)$ and $G(\lambda,\mu,z)$ be defined as follows
\begin{equation*}
    L(\lambda,\mu,z) = \int_0^c t^{\lambda - 1} (-\log(t))^{\mu} e^{-zt} dt
\end{equation*}
and
\begin{equation*}
    G(\lambda,\mu,z) = \int_0^c t^{\lambda - 1} (-\log(t))^{\mu} \log(-\log(t)) e^{-zt} dt,
\end{equation*}
where $0<c<1$. The expansion of $e^{Ks(t)^2}s(t)^{n-1}$ in \autoref{cor:expansion_of_exp(Ks^2)s^(n-1)} gives 
\begin{equation}\label{eq:expansion_of_int_0^c}
\begin{aligned}
\int_{0}^{c} e^{Ks(t)^2} s(t)^{n-1} e^{-(m+1)t} dt = & \pi^{-K/2}L\left(1-K,\frac{n-1-K}{2},m+1\right)  \\
& - \pi^{-K/2} \left(\frac{n-1-K}{4}\right) G\left(1-K,\frac{n-3-K}{2},m+1  \right) \\
& - \pi^{-K/2} \log(e^{K/2}\pi^{\frac{n-1-K}{4}}) L\left(1-K,\frac{n-3-K}{2},m+1\right) + \cdots,
\end{aligned}
\end{equation}

The following theorem provides the asymptotic expansion for $L(\lambda,\mu,z)$.
\begin{theorem} \label{thm:expansion_of_L} (see \cite[Theorem 2, p.~70]{Wong2001})
Let $0<c<1$ and let $\lambda$ and $\mu$ be any real numbers with $\lambda > 0$. We have
\begin{equation*}
    L(\lambda,\mu,z) \sim z^{-\lambda}(\log (z))^{\mu} \sum_{r=0}^{\infty} (-1)^r \binom{\mu}{r} \Gamma^{(r)}(\lambda) (\log(z))^{-r}
\end{equation*}
as $z \rightarrow \infty$,
where $\Gamma^{(r)}$ denotes the $r$th derivative of the gamma function.
\end{theorem}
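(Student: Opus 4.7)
The plan is to apply a Watson's-lemma-type argument, adapted to accommodate the logarithmic factor $(-\log t)^{\mu}$. First I would substitute $t = s/z$ to obtain
$$L(\lambda,\mu,z) = z^{-\lambda}\int_0^{cz} s^{\lambda-1}(\log z - \log s)^{\mu} e^{-s}\, ds.$$
This substitution pulls out the overall algebraic prefactor $z^{-\lambda}$ and isolates the large parameter inside the logarithmic factor, while the $e^{-s}$ weight ensures that the integrand decays exponentially in $s$ uniformly in $z$.

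Next I would factor out $(\log z)^{\mu}$ and expand via the generalized binomial series
$$\left(1 - \frac{\log s}{\log z}\right)^{\mu} = \sum_{r=0}^{\infty} (-1)^r \binom{\mu}{r}\left(\frac{\log s}{\log z}\right)^r,$$
which converges whenever $|\log s / \log z| < 1$. Formal term-by-term integration then gives
$$L(\lambda,\mu,z) \sim z^{-\lambda}(\log z)^{\mu}\sum_{r=0}^{\infty}(-1)^r \binom{\mu}{r}(\log z)^{-r}\int_0^{\infty} s^{\lambda-1}(\log s)^r e^{-s}\, ds.$$
The inner integral is exactly $\Gamma^{(r)}(\lambda)$: differentiating $\Gamma(\lambda) = \int_0^{\infty} s^{\lambda-1} e^{-s}\, ds$ under the integral sign $r$ times with respect to $\lambda$ is legitimate because $\lambda > 0$ ensures absolute convergence, and each differentiation brings down one factor of $\log s$. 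The claimed expansion follows.

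The main obstacle is to rigorously justify the term-by-term integration, since the binomial series fails to converge when $s$ is too small (so that $\log s < -\log z$) or too large (so that $\log s > \log z$). I would split the $s$-range into $[0,1]$, the bulk region $[1, z^{1-\delta}]$ for some small $\delta>0$, and the tail $[z^{1-\delta}, cz]$. On the bulk region, the binomial series converges uniformly and is dominated by $|s^{\lambda-1}(\log s)^N e^{-s}|$ up to a multiplicative constant for any truncation order $N$, so dominated convergence applies. On $[0,1]$ the factor $s^{\lambda-1}$ is integrable (using $\lambda>0$) and $\log s$ is negative and bounded above, giving alternating contributions that are absorbed into the remainder. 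On the tail, $e^{-s}\leq e^{-z^{1-\delta}}$ is super-polynomially small in $\log z$; likewise, replacing the upper limit $cz$ by $\infty$ in the $\Gamma^{(r)}(\lambda)$ integrals introduces only an exponentially small error of order $e^{-cz}$. Truncating at order $N$ and collecting these estimates shows the remainder is $O(z^{-\lambda}(\log z)^{\mu-N-1})$, which is the Poincar\'e sense of the asymptotic expansion.
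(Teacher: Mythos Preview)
Your approach is correct and essentially coincides with the method the paper adopts (following Wong): the same substitution $t=s/z$, the same factorization of $(\log z)^{\mu}$, the same binomial expansion of $(1-\log s/\log z)^{\mu}$, and the same identification $\int_0^{\infty} s^{\lambda-1}(\log s)^r e^{-s}\,ds = \Gamma^{(r)}(\lambda)$. The paper itself only cites Wong for this theorem, but its proof of the companion result (Theorem~\ref{thm:expansion_of_G}) follows exactly this template; the only cosmetic difference is that the paper controls the remainder via a uniform Taylor bound $|R_N|\leq C_N|\log u|^{N+1}/|\log z|^{N+1}$ valid on $(0,cz)$ (which requires choosing $N+1\geq\mu$), whereas you split the $s$-range into three pieces --- both devices accomplish the same thing.
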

In the following theorem we derive the asymptotic expansion for $G(\lambda,\mu,z)$ based on the proof of \cite[Theorem 2, p.~70]{Wong2001}. 
\begin{theorem}\label{thm:expansion_of_G}
Let $0<c<1$ and let $\lambda$ and $\mu$ be any real numbers with $\lambda > 0$. We have
\begin{equation*}
\begin{aligned}
    G(\lambda,\mu,z) \sim & z^{-\lambda}(\log (z))^{\mu} \log(\log(z)) \sum_{r=0}^{\infty} (-1)^r \binom{\mu}{r} \Gamma^{(r)}(\lambda) (\log(z))^{-r} + \\
    & + z^{-\lambda} (\log(z))^{\mu} \sum_{r=1}^{\infty} a_r \Gamma^{(r)} (\lambda) (\log(z))^{-r},
\end{aligned}
\end{equation*}
as $z \rightarrow \infty$, where 
\begin{equation} \label{eq:def_of_a_r}
    \text{$a_r = -\sum_{i=0}^{r-1} \binom{\mu}{i} \frac{(-1)^i}{r-i}$ for $r = 1,2,\dots$.}
\end{equation}
\end{theorem}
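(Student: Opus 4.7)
The key observation is that
\[
G(\lambda,\mu,z) \;=\; \frac{\partial}{\partial \mu} L(\lambda,\mu,z),
\]
since $\tfrac{\partial}{\partial \mu}(-\log t)^{\mu} = (-\log t)^{\mu}\log(-\log t)$, and differentiation under the integral sign is justified by the fact that the integrand is dominated on $(0,c)$, uniformly in a neighbourhood of any fixed $\mu\in\mathbb{R}$, by an integrable function of $t$. So in principle one can differentiate the expansion of Theorem~\ref{thm:expansion_of_L} term by term with respect to $\mu$ and match coefficients; however, the termwise differentiation of an asymptotic expansion is delicate, so I would take the safer route of re-running Wong's argument directly on $G$.

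Concretely, substitute $s=zt$ to obtain
\[
G(\lambda,\mu,z) \;=\; z^{-\lambda}\!\int_{0}^{zc} s^{\lambda-1}\bigl(\log z - \log s\bigr)^{\mu}\,\log\!\bigl(\log z-\log s\bigr)\,e^{-s}\,ds.
\]
Next, factorize $\log z - \log s = (\log z)\bigl(1 - \tfrac{\log s}{\log z}\bigr)$, so that
\[
\log(\log z - \log s) \;=\; \log(\log z) + \log\!\Bigl(1-\tfrac{\log s}{\log z}\Bigr),
\]
and expand the two $\mu$-th power / logarithm factors in the formal series in $u:=\log s/\log z$:
\[
(1-u)^{\mu} = \sum_{r=0}^{\infty}(-1)^{r}\binom{\mu}{r}u^{r},\qquad \log(1-u) = -\sum_{k=1}^{\infty}\tfrac{1}{k}\,u^{k}.
\]
Multiplying the Cauchy product of the two series with the $\log(\log z)$ term gives
\[
(\log z-\log s)^{\mu}\log(\log z-\log s)
= (\log z)^{\mu}\log(\log z)\!\sum_{r=0}^{\infty}\!(-1)^{r}\binom{\mu}{r}u^{r}
+ (\log z)^{\mu}\sum_{r=1}^{\infty} a_{r}\,u^{r},
\]
where the coefficient $a_r$ arises precisely as the Cauchy-product coefficient $-\sum_{i=0}^{r-1}\binom{\mu}{i}\tfrac{(-1)^{i}}{r-i}$, matching \eqref{eq:def_of_a_r}.

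Finally, insert the expansion into the integral representation of $G$, integrate termwise against $s^{\lambda-1}e^{-s}$, and use the standard identity
\[
\int_{0}^{\infty} s^{\lambda-1}(\log s)^{r}e^{-s}\,ds \;=\; \Gamma^{(r)}(\lambda),
\]
which produces exactly the two series in the statement. This yields the claimed expansion formally; the main obstacle, as in the proof of Theorem~\ref{thm:expansion_of_L}, is to justify (i) the extension of the upper limit of integration from $zc$ to $\infty$, which costs only an exponentially small error in $z$ because $e^{-s}$ decays fast, and (ii) the truncation error after finitely many terms of the double series in $u$, which is controlled by a Watson's-lemma type remainder estimate on intervals $(0,\delta)$ and $(\delta, zc)$ for suitably chosen $\delta=\delta(z)\to 0$ (exactly as in Wong's proof). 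With these remainder estimates, which transfer line-by-line from Theorem~\ref{thm:expansion_of_L} to $G$ thanks to the harmless extra factor $\log\log z$ in front and the uniform boundedness of $\log(1-u)$ for $u$ in a small interval about $0$, the conclusion follows.
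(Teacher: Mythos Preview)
Your proposal is correct and follows essentially the same route as the paper's own proof: substitute $s=zt$, factor out $(\log z)^{\mu}$, split $\log(\log z-\log s)=\log\log z+\log(1-\tfrac{\log s}{\log z})$ into two pieces (the paper calls the corresponding integrals $G_1$ and $G_2$), Taylor-expand both factors, identify the Cauchy-product coefficients $a_r$, and integrate termwise using $\int_0^\infty u^{\lambda-1}(\log u)^r e^{-u}\,du=\Gamma^{(r)}(\lambda)$. The only minor discrepancy is in the remainder control: the paper (following Wong) uses a global Taylor-remainder bound $|R_N|\le C_N|\log u|^{N+1}/(\log z)^{N+1}$ valid on all of $(0,cz)$ rather than your suggested split into $(0,\delta)\cup(\delta,zc)$, and your opening observation $G=\partial L/\partial\mu$ is a nice heuristic the paper does not mention.
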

\begin{proof}
With the substitution $u = zt$, we obtain
\begin{equation}\label{eq:G_after_substitution}
\begin{aligned}
    G(\lambda,\mu,z) & = z^{-\lambda}\int_{0}^{cz} u^{\lambda - 1} (\log(z) - \log(u))^{\mu} \log(\log(z)-\log(u)) e^{-u} du \\
    & = z^{-\lambda} (\log(z))^{\mu} \int_{0}^{cz} u^{\lambda -1} \left( 1 - \frac{\log(u)}{\log(z)} \right)^{\mu} \left(\log(\log(z)) + \log\left(1-\frac{\log(u)}{\log(z)}\right)   \right)e^{-u}du \\
    & = z^{-\lambda} (\log(z))^{\mu} (\log(\log(z))G_1 + G_2),
\end{aligned}
\end{equation}
where 
\begin{equation*}
    G_1 = \int_{0}^{cz} u^{\lambda -1} \left( 1 - \frac{\log(u)}{\log(z)} \right)^{\mu} e^{-u}du
\end{equation*}
and
\begin{equation}\label{eq:G_2_defined}
    G_2 = \int_{0}^{cz} u^{\lambda -1} \left( 1 - \frac{\log(u)}{\log(z)} \right)^{\mu}\log\left(1-\frac{\log(u)}{\log(z)}\right) e^{-u}du.
\end{equation}
We first derive the asymptotic expansion for $G_2$, the asymptotic expansion for $G_1$ can then be derived in a similar manner.

Let $N$ be an arbitrary positive integer such that $N+1 \geq \mu$. By Taylor's expansion, 
\begin{equation*}
\begin{aligned}
    \left( 1 - \frac{\log(u)}{\log(z)} \right)^{\mu} & = \sum_{r=0}^{N} (-1)^r\binom{\mu}{r} \left( \frac{\log(u)}{\log(z)} \right)^r + R_{1,N} \\
    \log\left(1-\frac{\log(u)}{\log(z)}\right) & = - \sum_{r=1}^{N} \frac{1}{r} \left( \frac{\log(u)}{\log(z)} \right)^r + R_{2,N}, 
\end{aligned}
\end{equation*}
for all $0<u<cz$, where
\begin{equation*}
\text{$|R_{i,N}| \leq C_{i,N}  \frac{|\log(u)|^{N+1}}{|\log(z)|^{N+1}}$ ($i = 1,2$)}
\end{equation*}
for some fixed constants $C_{1,N},C_{2,N} > 0$. Hence,
\begin{equation}\label{eq:TE_of_(1-logu)log(1-logu)}
 \left( 1 - \frac{\log(u)}{\log(z)} \right)^{\mu} \log\left(1-\frac{\log(u)}{\log(z)}\right) =  \sum_{r=1}^{2N} a_r \left( \frac{\log(u)}{\log(z)} \right)^r + R_{2N},
\end{equation}
for all $0< u < cz$, where $a_r$'s are defined as in \eqref{eq:def_of_a_r} and
\begin{equation*}
|R_{2N}| \leq C_{2N} \frac{|\log(u)|^{2N+1}}{|\log(z)|^{2N+1}}
\end{equation*}
for some fixed $C_{2N} > 0$. By substituting \eqref{eq:TE_of_(1-logu)log(1-logu)} in \eqref{eq:G_2_defined}, we obtain
\begin{equation*}
    G_2 = \sum_{r=1}^{2N} a_r (\log(z))^{-r} \int_0^{cz} u^{\lambda - 1} (\log(u))^r e^{-u} du + r_{2N},
\end{equation*}
where 
\begin{equation*}
    r_{2N} = \int_0^{cz} u^{\lambda -1} e^{-u} R_{2N} du.
\end{equation*}
\citeauthor{Wong2001} showed in \cite[p.~71]{Wong2001} that, as $z \rightarrow \infty$,
\begin{equation*}
    \int_0^{cz} u^{\lambda - 1} (\log(u))^r e^{-u} du = \Gamma^{(r)}(\lambda) + O(e^{-\epsilon c z}),
\end{equation*}
where $\epsilon \in (0,1/2)$. Furthermore,
\begin{equation*}
\begin{aligned}
    |r_{2N}| & \leq C_{2N} |\log(z)|^{-2N-1} \int_{0}^{cz} |u^{\lambda -1} \log(u)^{2N+1} e^{-u}|du \\
    & \leq C_{2N} |\log(z)|^{-2N-1} \int_{0}^{\infty} |u^{\lambda -1} \log(u)^{2N+1} e^{-u}|du
\end{aligned}
\end{equation*}
It can be shown that the latter integral is bounded (see \cite[eq (2.27), p.~71]{Wong2001}; thus, $r_{2N} = O(\log(z)^{-2N-1})$.
Hence, 
\begin{equation}\label{eq:expansion_of_G_2}
    G_2 = \sum_{r=1}^{2N} a_r \Gamma^{(r)}(\lambda) (\log(z))^{-r} + O(\log(z)^{-2N-1}).  
\end{equation}
In a similar manner, one can show that
\begin{equation}\label{eq:expansion_of_G_1}
    G_1 = \sum_{r=0}^{N} (-1)^r \binom{\mu}{r} \Gamma^{(r)}(\lambda) (\log(z))^{-r} + O(\log(z)^{-N-1}).
\end{equation}
Combining \eqref{eq:G_after_substitution}, \eqref{eq:expansion_of_G_2} and \eqref{eq:expansion_of_G_1}, we obtain the desired result.
\end{proof}

\subsubsection*{Conclusions}
\begin{equation}
\begin{aligned}
    J_{m,n}(\Delta) & = \frac{2 \Delta^n}{\Gamma(\frac{n}{2})} \int_0^{\infty} \erf^m(s) s^{n-1} e^{-\Delta^2s^2} ds \\
    & = \frac{\sqrt{\pi}\Delta^n }{\Gamma(\frac{n}{2})} \int_{0}^{\infty} e^{Ks(t)^2} s(t)^{n-1} e^{-(m+1)t} dt \\
    & = \frac{\sqrt{\pi}\Delta^n }{\Gamma(\frac{n}{2})} \int_{0}^{c} e^{Ks(t)^2} s(t)^{n-1} e^{-(m+1)t} dt + O\left(\frac{e^{-c(m+n)}}{m+n}\right). 
\end{aligned}
\end{equation}
By using \autoref{thm:expansion_of_L} and \autoref{thm:expansion_of_G} in \eqref{eq:expansion_of_int_0^c} and substituting $K = 1- \Delta^2$, we obtain \eqref{eq:expansion_of_J}. Note that if $r = 0$ then $n=1$ and $\Delta = 1$ and so $K=0$. In this case, $e^{Ks(t)^2} s(t)^{n-1} = 1$ and direct integration yields $J_{m,1}(1) = 1/(m+1)$.

\section{Problem set}  
\label{app: Test set}

\Cref{table: Test set} contains the explicit formula, domain and global minimum of the functions used to generate the high-dimensional test set. The problem set contains 19 problems taken from \cite{AMPGO, Ernesto2005, Bingham2013}. Problems that cannot be solved by BARON are marked with `$^*$'. Problems that will not be solved by KNITRO are marked with `$^\circ$'.

We briefly describe the technique we adapted from Wang et al. \cite{Wang2016} to generate high-dimensional functions with low effective dimensionality, which was first applied to the above test set in \cite{Cartis2020}. Let $\bar{g}(\bar{\mvec{x}})$ be any function from \Cref{table: Test set}; let $d_e$ be its dimension and let the given domain be scaled to $[-1, 1]^{d_e}$. We create a $D$-dimensional function $g(\mvec{x})$ by adding $D-d_e$ fake dimensions to $\bar{g}(\bar{\mvec{x}})$, $ g(\mvec{x}) = \bar{g}(\bar{\mvec{x}}) + 0\cdot x_{d_e+1} + 0 \cdot x_{d_e+2} + \cdots + 0\cdot x_{D}$. We further rotate the function by applying a random orthogonal matrix $\mtx{Q}$ to $\mvec{x}$ to obtain a non-trivial constant subspace. The final form of the function we test is
\begin{equation}\label{eq: f=g(Qx)}
	f(\mvec{x}) = g(\mtx{Q}\mvec{x}).
\end{equation}
Note that the first $d_e$ rows of $\mtx{Q}$ now span the effective subspace $\mathcal{T}$ of $f(\mvec{x})$. 

For each problem in the test set, we generate three functions $f$ as defined in \eqref{eq: f=g(Qx)}, one for each $D = 10$, $100$, $1000$.

\begin{table}[!ht]
	\centering
	\caption{The problem set listed in alphabetical order.}
	\label{table: Test set}
	\begin{tabular} {|L{4.1cm} | C{3cm} | C{3.3cm} | }
		\hline
		Function & Domain  & Global minima  \\ \hline
		1) Beale \cite{Ernesto2005}    & $\mvec{x} \in [-4.5,4.5]^2$ &  $g(\mvec{x}^*) = 0$ \\ \hline
		2) $^*$Branin \cite{Ernesto2005}    & \pbox{20cm}{
			$x_1 \in [-5,10]$ \\ $x_2 \in [0, 15]$} 
		&
		$g(\mvec{x}^*) = 0.397887$ \\ \hline
		
		3) Brent \cite{AMPGO}  & $\mvec{x} \in [-10,10]^2$ & $g(\mvec{x}^*) = 0$  \\ \hline
		
		4) $^\circ$Bukin N.6   \cite{Bingham2013}   & \pbox{20cm}{ $ x_1 \in [-15,-5]$ \\ $x_2 \in [-3,3]$} & $g(\mvec{x}^*) = 0$ \\ \hline
		
		5) $^*$Easom  \cite{Ernesto2005} & $\mvec{x}\in [-100,100]^2$   & $g(\mvec{x}^*) = -1$ \\ \hline
		
		6) Goldstein-Price \cite{Ernesto2005} & $\mvec{x} \in [-2,2]^2 $  & $g(\mvec{x}^*) = 3$ \\ \hline
		
		7) Hartmann 3 \cite{Ernesto2005} & $\mvec{x} \in [0,1]^3$ & $g(\mvec{x}^*) = -3.86278$
		\\ \hline
		
		8) Hartmann 6 \cite{Ernesto2005}  & $\mvec{x} \in [0,1]^6$  & $g(\mvec{x}^*) = -3.32237$
		\\ \hline
		
		9) $^*$Levy \cite{Bingham2013}  & $\mvec{x} \in [-10,10]^4$ & $g(\mvec{x}^*) = 0$  \\ \hline
		
		10) Perm 4, 0.5 \cite{Bingham2013} & $\mvec{x} \in [-4,4]^4$ & $g(\mvec{x}^*) = 0$  \\ \hline
		
		11) Rosenbrock \cite{Bingham2013}   & $\mvec{x} \in [-5,10]^3$ & $g(\mvec{x}^*) = 0$  \\ \hline
		
		12) Shekel $5$ \cite{Bingham2013}  & $\mvec{x} \in [0,10]^4$ & $ g(\mvec{x}^*) = -10.1532$
		\\ \hline
		
		13) Shekel $7$ \cite{Bingham2013}  & $\mvec{x} \in [0,10]^4$ & $g(\mvec{x}^*) = -10.4029$
		\\ \hline
		
		14) Shekel $10$ \cite{Bingham2013} & $\mvec{x} \in [0,10]^4$ & $g(\mvec{x}^*) = -10.5364$
		\\ \hline
		
		15) $^*$Shubert \cite{Bingham2013}  & $\mvec{x} \in [-10,10]^2$ & $g(\mvec{x}^*) = -186.7309$ \\ \hline
		
		16) Six-hump camel \cite{Bingham2013} & \pbox{20cm}{$x_1 \in [-3,3]$ \\ $x_2 \in [-2,2]$} & $g(\mvec{x}^*) = -1.0316$ \\ \hline
		
		17) Styblinski-Tang \cite{Bingham2013}   & $\mvec{x} \in [-5,5]^4$ & $g(\mvec{x}^*) = -156.664$  \\ \hline
		
		18) Trid  \cite{Bingham2013} & $\mvec{x} \in [-25,25]^5$ & $g(\mvec{x}^*) = -30$ \\ \hline
		
		19) Zettl \cite{Ernesto2005}   & $\mvec{x} \in [-5,5]^2$ & $g(\mvec{x}^*) = -0.00379$  \\ \hline
	\end{tabular}
\end{table}

\end{document}